\theoremstyle{plain}
\newtheorem{thm}{Theorem}[section]
\newtheorem{lemma}[thm]{Lemma}
\newtheorem{proposition}[thm]{Proposition}
\numberwithin{equation}{section}
\newtheorem*{thmunnumbered}{Theorem}
\newcommand{\urho}{{\underline{\rho}}}
\newcommand{\vsig}{\varsigma}
\newcommand{\floor}[1]{\lfloor #1 \rfloor }
\newcommand{\Be}{\begin{equation}}
\newcommand{\Ee}{\end{equation}}
\newcommand{\Bea}{\begin{align}}
\newcommand{\Eea}{\end{align}}
\newcommand{\Beas}{\begin{align*}}
\newcommand{\Eeas}{\end{endalign*}}
\newcommand{\Benu}{\begin{enumerate}}
\newcommand{\Eenu}{\end{enumerate}}
\newcommand{\Bi}{\begin{itemize}}
\newcommand{\Ei}{\end{itemize}}
\def\intslash{\rlap{\kern  .32em $\mspace {.5mu}\backslash$ }\int}
\def\qsl{{\rlap{\kern  .32em $\mspace {.5mu}\backslash$ }\int_{Q_x}}}
\def\vth{\vartheta}
\def\floor#1{{\lfloor #1 \rfloor }}
\def\floork3{{\lfloor k/3 \rfloor }}
\def\emph#1{{\it #1 }}
\def\diam{{\text{\rm diam}}}
\def\bbone{{\mathbbm 1}}
\def\Ga{\Gamma}
\def\tx{{\tilde x}}
\def\ty{{\tilde y}}
\def\cf{{\it cf}}
\def\rank{{\text{\rm rank }}}
\def\coker{{\text{\rm Coker }}}
\def\loc{{\text{\rm loc}}}
\def\comp{{\text{\rm comp}}}
\def\inn#1#2{\langle#1,#2\rangle}
\def\meas{{\text{\rm meas}}}
\def\lc{\lesssim}
\def\eps{\varepsilon}
\def\ep{\epsilon}
\def\ka{\kappa}
\def\la{\lambda}
              \def\Om{\Omega}
\def\ups{\upsilon}
\def\fS{{\mathfrak {S}}}
\def\ft{{\mathfrak {t}}}
\def\fw{{\mathfrak {w}}}
\def\fx{{\mathfrak {x}}}
\def\fy{{\mathfrak {y}}}
\def\fz{{\mathfrak {z}}}
\def\bbH{{\mathbb {H}}}
\def\bbR{{\mathbb {R}}}
\def\bbZ{{\mathbb {Z}}}
\def\cA{{\mathcal {A}}}
\def\cC{{\mathcal {C}}}
\def\cE{{\mathcal {E}}}
\def\cI{{\mathcal {I}}}
\def\cJ{{\mathcal {J}}}
\def\cL{{\mathcal {L}}}
\def\cM{{\mathcal {M}}}
\def\cN{{\mathcal {N}}}
\def\cR{{\mathcal {R}}}
\def\cT{{\mathcal {T}}}
\def\y{{\hbox{\roman y}}}
\def\be#1{\begin{equation}\label{#1}}
\def\endeq{\end{equation}}
\def\endal{\end{align}}
\def\bas{\begin{align*}}
\def\eas{\end{align*}}
\def\bi{\begin{itemize}}
\def\ei{\end{itemize}}
\begin{document}
\title[$L^p$-Sobolev regularity of generalized Radon transforms]
%{$\mathbf L^{\mathbf p}$ Sobolev regularity of a class of generalized Radon transforms}
{$\mathbf L^{\mathbf p}$-Sobolev estimates for a class of integral operators with folding canonical relations}
% in $\mathbb R^3$}
\author{Malabika Pramanik and Andreas Seeger}

\address{Malabika Pramanik\\Department of Mathematics\\University of British Columbia\\Room 121, 1984 Mathematics Road\\Vancouver, B.C., Canada V6T 1Z2} \email{malabika@math.ubc.ca}

\address{Andreas Seeger   \\Department of Mathematics\\ University of Wisconsin-Madison\\Madison, WI 53706, USA}\email{seeger@math.wisc.edu}

\dedicatory{In memory of Eli Stein}

\subjclass{35S30, 44A12, 42B20, 42B35} \keywords{Regularity of integral
operators, Radon transforms,  Fourier integral
operators, folding canonical relations, Sobolev spaces}

\begin{abstract} We prove a sharp $L^p$-Sobolev regularity result for a class of generalized Radon transforms for  families of curves in a three dimensional manifold, with folding  canonical relations.  The proof relies on  decoupling inequalities by Wolff and by Bourgain-Demeter for plate decompositions of thin neighborhoods of cones.
	\end{abstract}

\thanks{Supported in part by NSERC and NSF grants.}

%\date{\today}

\maketitle

\section{Introduction}In this paper we continue the study \cite{pr-se} of $L^p$ 
regularity properties of integral operators along families of curves in $\bbR^3$ satisfying suitable
curvature and torsion conditions.
The  previous article dealt  with the
translation invariant case, i.e. the integrals
\Be\label{translinv}
\cA f(x) =\int f(x-\gamma(s)) \chi(s) ds
\Ee
where $\gamma$ is a curve in $\bbR^3$ with nonvanishing curvature and 
torsion and $\chi$ is smooth and compactly supported. The authors showed an optimal result with a gain of $1/p$ derivatives for sufficiently large $p$, namely that for large $p$  the operator $\cA$ 
%gains   $1/p$ derivatives on $L^p$, {\it i.e.} 
maps $L^p(\bbR^3)$ 
into the $L^p$-Sobolev space $L^p_{1/p}$.
The usual combination of damping of oscillatory integrals arguments 
and  improved $L^\infty$ bounds, as employed in \cite{SoSt},
does not apply to averaging operators 
for  curves in three or higher dimensions. Instead the authors had 
to apply a deep  result of Wolff \cite{Wolff1} on decompositions of 
cone multipliers in $\bbR^3$ which is now known as an $\ell^p$-decoupling inequality. The result 
in \cite{pr-se} can be combined with a  recent result by Bourgain and Demeter \cite{bourgain-demeter} which extends  the  decoupling result for the cone in $\bbR^3$  to the optimal $L^p$ range $p>6$; this combination immediately yields  $\cA: L^p(\bbR^3)\to L^p_{1/p}(\bbR^3)$ for 
$p>4$. A result by Oberlin and Smith \cite{Oberlin-Smith99}  shows that this range is optimal, up to possibly the endpoint $p=4$.

In the current work we shall treat extensions of these results for operators which are not of convolution type. Let $\Omega_L$, $\Omega_R$ be three-dimensional smooth manifolds and consider families of curves $\cM_x\subset \Omega_R$ parametrized by 
and smoothly depending on $x\in \Omega_L$.
 Let $d\sigma_x$ be arclength  measure on $\cM_x$ and $\chi_\circ\in C_c^\infty (\bbR^3\times \bbR^3)$.
 %Given a family of densities $d\sigma_x $  on $\cM_x$, smoothly dependent on $x$, 
 We define 
 %for compactly supported smooth $f$   
 the generalized Radon transform operator 
$\cR: C^\infty_c(\Omega_R)\to C^\infty(\Omega_L)$  by 
\begin{equation}
\mathcal Rf(x) = \int_{\mathcal M_x} 
f(y)  \chi_\circ(x,y) d\sigma_x(y)\,.\ \label{GRT}  \end{equation} 

In order to formulate our results we use the double fibration formalism of Gelfand and Helgason (see e.g. \cite{guillemin-sternberg}, p. 340 ff.).
 Assume 
$$
\mathcal M_x = \left\{y \in \Omega_R : (x,y) \in \mathcal M \right\}
$$
where $\mathcal M$ is  a submanifold of $\Omega_L \times \Omega_R$ of 
codimension $2$  such that the 
projections 
%$\varpi_L:(x,y)\mapsto x$ and $\varpi_R:(x,y)\mapsto y$, as maps defined on $\cM$ 
\begin{equation}\label{singsupp}
\xymatrix{& \mathcal M \ar[dl]\ar[dr] & \\
\Omega_L & & \Omega_R}  \end{equation}
%\begin{equation}\label{singsupp}\begin{aligned}
%&\mathcal M \rightarrow \Omega_L
%\\
%&\mathcal M \rightarrow \Omega_R \end{aligned}\end{equation} 
have surjective differentials.
The surjectivity assumption on the differential of  $\cM\to \Omega_L$ implies that the  $\cM_x$ are smooth immersed curves in $\Omega_R$ (depending smoothly on $x$). Similarly 
the corresponding  assumption on the differential of  $\cM\to \Omega_R$ implies  that 
$\mathcal M^y = \left\{x \in \Omega_L : (x, y) \in \mathcal M \right\}$
 are smooth immersed curves in $\Omega_L$ (depending smoothly on $y$).

The  operator $\mathcal R$ can be realized as a Fourier integral operator of order $-1/2$ belonging to the H\"ormander class 
$I^{-\frac{1}{2}}(\Omega_L, \Omega_R; (N^{\ast}\mathcal M)' )$ 
%(N^{\ast}\mathcal M)')$,
 where 
\[(N^{\ast}\mathcal M)'  = \{(x,\xi, y,\eta): (x,y, \xi,-\eta)\in N^*\cM\}\] with $N^*\cM$ the conormal bundle given by 
\[N^*\cM  
:= \left\{(x,y, \eta, \xi)) \in T^{\ast}(\Omega_L \times \Omega_R) \setminus \{0\} : (\xi, \eta) \perp T_{(x,y)} \mathcal M \right\}.  \]
(\cf. \S\ref{FIOsection}). 
 
 The  assumptions on the projections  \eqref{singsupp}
imply that $$\mathcal C: = (N^{\ast}\mathcal M)'\subset 
 (T^{\ast}\Omega_L \setminus 0_L) \times (T^{\ast}\Omega_R \setminus 0_R) 
 $$
 with $0_L$ and $0_R$ referring to the zero sections of 
the cotangent spaces $T^{\ast}\Omega_L$ and $T^{\ast}\Omega_R$,
 respectively. $\mathcal C$ is a homogenous canonical relation, i.e. if 
 $\sigma_L$ and $\sigma_R$ are the canonical two-forms on $T^{\ast}\Omega_L$ and $T^{\ast}\Omega_R$ respectively, then $\mathcal C$ is Lagrangian with respect to $\sigma_L - \sigma_R$. As is well-known from the theory of Fourier integral operators (see \cite{hoer-fio}, \cite{phongsurvey}), the
$L^2$ Sobolev  regularity properties of $\mathcal R$ are  governed by the geometry of the projections 
\begin{equation}
\xymatrix{& \mathcal C \ar[dl]_{\pi_L} \ar[dr]^{\pi_R} & \\
T^{\ast}\Omega_L  & & T^{\ast}\Omega_R } \label{arrow-diagram}  \end{equation}

Since $\cC$ is Lagrangian the differential 
$(D\pi_L)_P$ is invertible if and only if  $(D\pi_R)_P$  is invertible (\cite{hoer-fio}). 
For the  canonical relations for 
averaging operators  over   curves in  dimensions $\ge 3$ the maps $\pi_L$ (and $\pi_R$) fail to be 
diffeomorphisms, namely  for every point $(x,y)\in \cM$ there is  $P=(x,\xi,y,\eta)\in (N^*\cM)'$ 
so that  $(D\pi_L)_P$ and $(D\pi_R)_P$ are not invertible.

\subsection*{\bf Statement of the main result}
We shall assume that the only singularities $\pi_L$ and $\pi_R$ are {\it Whitney folds} and say that $\cC$ projects with two-sided fold singularities. Recall the definition from  \cite[Appendix C4]{hoer}. Given a  $C^\infty$ map $g: X\to Y$ between $C^\infty$ manifolds and $P\in X$  the Hessian 
$g''(P)$ 
is invariantly defined as a map from $\ker(g')_P$ to $\coker(g')_{g(P)}$. Then $g$ has a Whitney fold at $P$ 
if $\mathrm{dim} (\ker(g')_P)=1$,  $\dim (\coker(g')_{g(P})=1$ and the Hessian at $P$ is not equal to $0$. 
Equivalently,  $g$ is such that for every point $P\in X$, $Dg_P$ is either invertible or $g$ has a Whitney fold then
$\cL=\{ P: \det (Dg)_P\neq 0\}$ is an immersed hypersurface of $X$ and  for any vector field $V$ with
$V_P\in \ker (Dg)_P$ for all $P\in \cL$ (a ``kernel field")  we have $V(\det Dg)\neq 0$ at $P$.

\begin{thm}\label{mainthm}
Let $\cM\subset \Om_L\times\Om_R$  be a four-dimensional manifold  such that the projections \eqref{singsupp} are submersions. Assume  that  
 the only singularities of $\pi_L:(\cN^*\cM)'\to T^*\Om_L$  and
  $\pi_R:(\cN^*\cM)'\to T^*\Om_R$ 
 are Whitney folds.
 Let $\cL\subset (N^*\cM)'$ be the conic hypersurface manifold where $D\pi_L$ and $D\pi_R$ drop rank by one, and let $\varpi$ be  
the  projection of $(\cN^*\cM)'$ to the base $\cM$. Suppose  that its restriction to $\cL$,
\begin{equation}\label{varpi} \varpi: \cL\mapsto \cM \end{equation}
is a submersion.
Then $\cR$ is extends to a continuous  operator 
$$ \cR: L^p_{\comp} (\Omega_R)\mapsto  L^p_{1/p,\loc}(\Omega_L), \quad 4<p<\infty\,.
$$
\end{thm}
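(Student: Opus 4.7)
The plan is to follow the blueprint of \cite{pr-se} for the translation-invariant case but now in a variable-coefficient setting, with the two-sided fold normal form carrying out the conversion between the general operator and a model whose Fourier side exhibits cone-like concentration amenable to decoupling. The starting point is a dyadic (Littlewood--Paley) decomposition $\cR=\sum_{k\geq 0}\cR_k$ with $\cR_k$ a Fourier integral operator of order $-1/2$ having frequency support in $|\xi|\sim 2^k$. By standard kernel estimates for the low-frequency part it suffices to prove, uniformly in $k\geq 1$,
\begin{equation*}
\Norm{\cR_k f}_{L^p(\Om_L)}\;\lc\; 2^{-k/p}\Norm{f}_{L^p(\Om_R)},\qquad p>4.
\end{equation*}
After microlocalizing, any point $P_0\in\cC\setminus\cL$ contributes a local diffeomorphism piece, where the classical FIO bound $2^{-k/2}$ is much stronger than needed. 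Hence the analysis reduces to a small conic neighbourhood of a point $P_0\in\cL$.

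Near such a $P_0$ I would invoke a normal form for two-sided Whitney folds (in the spirit of Melrose--Taylor / Greenleaf--Uhlmann): by composing on the left and the right with elliptic FIOs associated to canonical transformations, the canonical relation $\cC$ can be conjugated locally to the canonical relation of a model operator on $\bbR^3$. The submersion hypothesis on $\varpi:\cL\to\cM$ is exactly the condition needed for this model to be the canonical relation of the convolution operator $\cA$ from \eqref{translinv}, associated to a curve of nonvanishing curvature and torsion; indeed this submersion rules out degeneracies of the fold set along the fibers of $\varpi$ and is equivalent to the analogue of the curvature/torsion nondegeneracy for $\cM_x$ and $\cM^y$. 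Under this equivalence the variable-coefficient operator $\cR_k$ is, modulo lower order errors and a conjugation that preserves $L^p$ norms up to constants, reduced to a microlocalized dyadic piece of the model $\cA$.

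Having transported the problem to the model, the symbol of each $\cR_k$ lives on a $2^k$-dilate of a fixed conic surface in $T^*\Om_L$. I would perform a second dyadic decomposition into $O(2^{k/2})$ angular plates of dimensions $2^{k}\times 2^{k/2}\times 2^{k/2}$ tangent to this cone, so that $\cR_k=\sum_\ell \cR_{k,\ell}$. Each $\cR_{k,\ell}$ is, by the folding calculus of Melrose--Taylor and Greenleaf--Seeger, an operator of order $-1/2$ mapping $L^2\to L^2$ with the improved bound $2^{-k/2-k/4}$ coming from the two-sided fold (this extra $2^{-k/4}$ is the crucial gain over the generic bound). Summing these plate pieces via the Bourgain--Demeter $\ell^p$-decoupling inequality for the cone in $\bbR^3$ \cite{bourgain-demeter} for $p>4$ (after interpolation with the $L^2$ bound), and using finite overlap of the plate supports on the spatial side (this is where the submersion of $\varpi$ on $\cL$ is used again, to propagate the curvature of the cone through the mapping geometry), yields precisely the gain of $1/p$ derivatives.

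The main obstacle I anticipate is the variable-coefficient plate decomposition: the decoupling inequality is a Fourier-analytic statement tied to a fixed cone in $\bbR^3$, while here each $x$ produces its own cone $\pi_L(\cL\cap T_x^*\Om_L)$ varying smoothly. Justifying that one can freeze the cone on each ball of radius $\sim 2^{-k/2}$, then patch the local decoupling estimates into a global one without losing the $1/p$ gain, is delicate; it requires careful control of remainders in the two-sided fold normal form and a rigidity-type argument ensuring uniform curvature of the family of cones, which is exactly what the submersion hypothesis \eqref{varpi} provides.
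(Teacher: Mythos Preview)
Your overall shape is right --- dyadic pieces $\cR_k$, a further decomposition near the fold, $L^2$ input plus cone decoupling --- but two of the load-bearing steps, as you have written them, would not go through.

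First, the reduction ``by composing on the left and the right with elliptic FIOs'' to the translation-invariant model $\cA$ is overoptimistic. A Melrose--Taylor type normal form puts the \emph{canonical relation} into a model form, but the resulting operator is still genuinely variable-coefficient in its amplitude; you do not get a convolution operator, and in any case conjugation by FIOs does not preserve $L^p$ for $p\neq 2$ with the uniform constants you would need here. The paper never attempts such a global reduction. Instead it works directly with the representation \eqref{RS} and constructs, for each base point $(a,b)\in\cM$, an explicit change of variables $(x,y)\mapsto(w,z)$ (\S\ref{changesofvarsect}) which brings the phase into a model form $\fS$ satisfying the normalizations \eqref{Sinormal}. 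The curvature of the fibered cone $\Sigma_x$ is established by a direct calculation (Lemma~\ref{curvcalc}) showing $\det(\Xi,\Xi_{y_3},\Xi_{y_3y_3})=-\kappa^2$, where $\kappa$ is exactly the fold invariant \eqref{kappadef}; this is where the hypothesis on $\varpi$ enters (via Lemma~\ref{Deltas}).

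Second, and more seriously, your secondary decomposition is not the right one and your proposed freezing scale is too coarse. The paper's index $\ell$ in $\cR_{k,\ell}$ is \emph{not} an angular plate index: it measures the distance to the fold surface, $|\tau_1\Delta_1+\tau_2\Delta_2|\approx 2^{-\ell}$, with $0\le\ell\le\lfloor k/3\rfloor$; the $L^2$ bound is $\|\cR_{k,\ell}\|_{2\to2}\lc 2^{(\ell-k)/2}$, not a uniform $2^{-3k/4}$. The decoupling is then performed in the $y_3$-variable, after localizing $x$ to cubes of side $2^{-\ell}$. Your anticipated obstacle is exactly right, but freezing on balls of radius $2^{-k/2}$ and applying Bourgain--Demeter once does not work: the Taylor-expansion errors in the plate localization (Lemma~\ref{platelocalizationlemma}) are of size $O(\delta_0^3)$ and $O(2^{-\ell}\delta_0)$, which force the constraint $\delta_1\gtrsim\max\{\delta_0^{3/2},(2^{-\ell}\delta_0)^{1/2}\}$ on the target scale. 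One therefore cannot pass from scale $\sim 1$ to scale $2^{-\ell}$ in a single decoupling step. The paper's solution (Proposition~\ref{decouplingstepprop} and the proof of Theorem~\ref{decouplingthm}) is an \emph{iterated} decoupling through a sequence of scales $2^{-m_0}>2^{-m_1}>\cdots>2^{-m_{N(\ell)}}$ with $m_{j+1}\le\min\{\tfrac32 m_j,\tfrac{m_j+\ell}{2}\}$ and $N(\ell)=O_\epsilon(\log\ell)$; at each step one re-applies the change of variables at the new base point and invokes Bourgain--Demeter only on the short interval $[\delta_1,\delta_0]$. This inductive mechanism, together with the combination of \eqref{phstest} and \eqref{infty} to get \eqref{ellpLpbds}, is what produces the bound $\|\cR_{k,\ell}\|_{p\to p}\lc_\epsilon 2^{-k/p}2^{\ell(\epsilon+2/p-1/2)}$ for $4<p\le6$, summable in $\ell$ precisely when $p>4$. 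The final passage from these bounds to the actual $L^p_{1/p}$ estimate uses a Calder\'on--Zygmund-type result from \cite{prs}, not just Littlewood--Paley summation.
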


The conclusion means that for  any $C^\infty$-function $\ups$ compactly supported in a coordinate chart of $\Omega_L$ and  for any compact  $K\subset \Omega_R$  we have for all $L^p$ functions $f$ supported in $K$ 
$$\|\ups\,\cR f\|_{L^p_{1/p}} \le C_p(\ups,K) \|f\|_p\,.$$
Here $L^p_{s}$ is the standard Sobolev space consisting of  tempered distributions $g$ on $\bbR^3$ with $(I-\Delta)^{s/2} g\in L^p(\bbR^3)$. It is easy to see that the regularity index $s=1/p$ cannot be improved. 
As mentioned above the result fails for $p<4$, by a result in \cite{Oberlin-Smith99}. 
Regarding the hypotheses in Theorem \ref{mainthm}, one may  conjecture that  the two-sided fold assumption can be weakened to a one-sided fold assumption, i.e. that  the assumption of $\pi_R$ being  a Whitney fold can be dropped. See \S\ref{xraysubsection} for further discussion  of relevant examples, and \S\ref{lastsection} for related results.

Using  a theorem in \cite{prs} the  regularity result  can be further improved by using Triebel-Lizorkin spaces $F^s_{p,q}$, namely we have 
\Be\label{TL}\|\ups\, \cR f\|_{F_{p,q}^{1/p}} \le C_{p,q}(\ups,K) \|f\|_{F_{p,p}^0}\,,\quad 4<p<\infty,\,\, q>0,\Ee
for $f\in F_{p,p}^0$ supported in $K$, this is  further discussed in  \S\ref{endpoint-bound}. Recall that 
$F_{p,2}^s=L^p_s$ and 
$F_{p,q}^s\subset F_{p,2}^s\subset F_{p,p}^s=B_{p,p}^0$ for $q\le 2\le p$, and any $s\in \bbR$.

\medskip

{\it Notation.} We shall use the notation $A\lc B$ for $A\le CB$ with an unspecified constant $C$.

\subsection*{\it Acknowledgement} The authors thank Geoffrey Bentsen for reading a draft of this paper and providing valuable input.

\section{Generalized Radon transforms and Fourier integral representations}\label{FIOsection}
We recall some basic facts on generalized Radon transforms and Fourier integrals. By localization we may assume that the Schwartz kernel of our operator is supported in a small neighborhood of a base point $P^\circ=(x^\circ,y^\circ) \in \mathcal M$.
On the neighborhood 
the manifold 
$\mathcal M$ is  given by a defining function $ \Phi$, i.e., $\mathcal M = \{(x, y) :  \Phi(x,y) = 0 \}$, 
where $ \Phi = (\Phi^1, \Phi^2)^{\intercal}$  is a two-dimensional vector function defined on $\Omega_L \times \Omega_R$ and such that $\Phi(P_0)=0$. 
The Schwartz kernel of our operator is given by the measure
$\chi\,\delta\circ\Phi$ 
where $\delta$ is the Dirac measure in $\bbR^2$ and $\chi$ is $C^\infty$ and compactly supported near the base point which can be chosen to be the origin in $\bbR^3\times\bbR^3$.
By the Fourier inversion formula the Schwartz kernel is  an oscillatory integral distribution, formally written as
(\cite{hoer-fio}, 
\cite{guillemin-sternberg},
\cite{SoSt})
\Be\label{FIOrep}\chi(x,y) \delta \circ \Phi (x,y) = (2\pi)^{-2} \iint e^{i(\tau_1\Phi_1(x,y) +\tau_2\Phi_2(x,y))} \chi(x,y) d\tau.\Ee

Since 
the projection $\cM\to \Om_L$  is a submersion,
 the $2 \times 3$ matrix $ \Phi_y$ has rank 2, so by a linear change of variables in $y$, near $y_0$  we can assume that $\det[\nabla_{y'}\Phi^1, \nabla_{y'}\Phi^2] \ne 0$ where $y' = (y_1, y_2)^\intercal$. Then $(x, y_3)$ can be chosen as the local coordinates on $\mathcal M$, so that the equation $ \Phi(x,y) = 0$ is equivalent to 
\Be y_i= S^i(x_1,x_2,x_3,y_3), \quad i=1,2.\Ee
Since $\Phi(x, S^1, S^2,y_3)=0$ we can write 
\begin{equation} \Phi(x,y) = \sum_{i=1}^{2} ( S^i(x, y_3)-y_i) {B}_i(x, y), \label{Phi} \end{equation}
where 
\[ {B}_i(x,y) = -\int_{0}^{1}  \Phi_{y_i}\bigl(x,  S(x, y_3) + s(y' -  S(x, y_3)), y_3 \bigr) \, ds. \]
Since $ \Phi_{y_1}$ and $ \Phi_{y_2}$ are linearly independent on $\mathcal M$, by choosing the cutoff $\chi$ to be supported sufficiently close to $\mathcal M$, we can ensure that $ B_1$ and $ B_2$ are linearly independent as well. The equation (\ref{Phi}) can therefore be re-written as \[ \begin{pmatrix} 
\Phi^1(x,y)\\ \Phi^2(x,y) \end{pmatrix} 
= B(x,y) \begin{pmatrix} S^1(x, y_3)-y_1\\
 S^2(x, y_3)-y_2\end{pmatrix} \]
where $B(x,y)$ is the $2 \times 2$ invertible matrix whose column vectors are $ B_1$ and $ B_2$. Since the projection $\cM\to \Om_R$ is a submersion  the $x$-gradients $S^1_x(x,y_3)$, $S^2_x(x,y_3)$
are linearly independent. 
Now \eqref{FIOrep} can be rewritten as
\begin{equation}
\begin{aligned}\chi(x,y) \,\delta\!\circ\!\Phi(x,y) 
  &=  \chi(x,y)
  \int_{\tau \in \mathbb R^2} e^{i \langle \tau,\Phi(x,y) \rangle} d\tau  \\ &= \frac{\chi(x,y)}{|\det B(x,y)|} \iint e^{i \langle \tau,   S(x, y_3)-y' \rangle} 
  d\tau.  
\end{aligned} \label{RS} \end{equation}

Then  in a neighborhood of the reference point $P$ the canonical relation, that is the twisted conormal bundle  $(N^*\cM)'$, 
is given by
\begin{multline*}\{(x,\xi,y,\eta): y_i=S^i(x, y_3), \,\, i=1,2, \quad \xi= \tau_1 S^1_x(x,y_3)+\tau_2 S^2_x(x,y_3),
\\
\eta= (\tau_1, \tau_2, -\tau_1S^1_{y_3}(x,y_3)-\tau_2S^2_{y_3}(x,y_3))\}.
\end{multline*}
Thus  using $(x_1, x_2, x_3, \tau_1,\tau_2, y_3)$ as coordinates on  $(N^*\cM)'$  the projection 
$\pi_L: (N^*\cM)'\to T^*\Om_L$ is identified with
\Be \label{tildepiL}\tilde\pi_L: 
 (x_1,x_2,x_3, 
\tau_1,\tau_2,y_3)\mapsto (x, \tau_1 S^1_x(x,y_3)+\tau_2 S^2_x(x,y_3)).
\Ee
Then
$$
\det  D\tilde \pi_L = \det (S^1_x, S^2_x, \tau_1 S^1_{xy_3}+\tau_2 S^2_{xy_3}) = 
\tau_1 \Delta_1+\tau_2\Delta_2
$$
with
\Be \label{Deltadef}
\Delta_i(x,y_3)\equiv \Delta_i^S(x,y_3):=  \det (S^1_x, S^2_x,  S^i_{xy_3})\big|_{(x,y_3)} , \quad i=1,2.\Ee
Hence $\cL$ is the submanifold of $(\cN^*\cM)'$ consisting of $(x,\xi,y,\eta)$ such that
\begin{align*} %&\cL =\{(x,y,\xi,-\eta): 
  \xi&= \tau_1 S^1_x(x,y_3)+\tau_2 S^2_x(x,y_3),
\,\, \eta= (\tau_1, \tau_2, -\tau_1S^1_{y_3}(x,y_3)-\tau_2S^2_{y_3}(x,y_3)),
\\y_i&=S^i(x, y_3), \,\, i=1,2, \,\,\,\,\,\,\,\,\,\tau_1 \Delta_1(x,y_3)+\tau_2\Delta_2(x,y_3)=0
.
\end{align*}

\section{Curvature}
We shall show that the assumptions in  Theorem \ref{mainthm} imply a curvature condition on the fibers of $\cL$, as formulated by Greenleaf and the second author in \cite{GS}.

Let $\Delta_i$ be as in \eqref{Deltadef}
and $P^\circ=(a^\circ, S^1(a^\circ,b^\circ), S^2(a^\circ,b^\circ))$ be our reference point. The following preparatory observation
is based on the assumption that $\varpi$ in \eqref{varpi} is a submersion.

\begin{lemma}\label{Deltas} We have 
$$|\Delta_1(x,y_3)|+|\Delta_2(x, y_3)|\neq 0$$
for $(x,y_3)$ near $(a,b)$.
\end{lemma}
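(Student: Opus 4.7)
My plan is to argue by contradiction using a dimension count, leveraging the explicit description of $\cL$ obtained in Section \ref{FIOsection}. Recall that in the local coordinates $(x,y_3,\tau_1,\tau_2)$ on $(N^*\cM)'$, the hypersurface $\cL$ is cut out by
\[
F(x,y_3,\tau):=\tau_1\Delta_1(x,y_3)+\tau_2\Delta_2(x,y_3)=0,
\]
and the projection $\varpi$ of \eqref{varpi} simply reads $(x,y_3,\tau)\mapsto(x,y_3)$, with vertical kernel spanned by $\partial_{\tau_1},\partial_{\tau_2}$.

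The first step is a dimension tally: $\dim\cM=4$, and $(N^*\cM)'$ (with the zero sections of $T^*\Om_L$, $T^*\Om_R$ removed, which amounts to $\tau\ne 0$ by the linear independence of $S^1_x,S^2_x$) is six-dimensional. Since $\pi_L$ has only Whitney fold singularities, $\cL$ is a smooth conic hypersurface of dimension $5$. Consequently the submersion hypothesis on $\varpi|_\cL:\cL\to\cM$ forces its fibers to be one-dimensional.

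Next I would suppose for contradiction that $\Delta_1(x_0,y_3^0)=\Delta_2(x_0,y_3^0)=0$ at some point arbitrarily close to $(a^\circ,b^\circ)$. Because $F$ is linear in $\tau$, the simultaneous vanishing of $\Delta_1$ and $\Delta_2$ makes $F(x_0,y_3^0,\cdot)$ identically zero; hence the entire punctured fiber $\{(x_0,y_3^0)\}\times(\R^2\setminus\{0\})$ is contained in $\cL$. This fiber has dimension two, contradicting the one-dimensionality of the fibers of a submersion $\varpi|_\cL$. Equivalently, at the infinitesimal level, at any $p=(x_0,y_3^0,\tau)\in\cL$ with $\Delta_1=\Delta_2=0$ one has $dF_p=\tau_1\,d_{x,y_3}\Delta_1+\tau_2\,d_{x,y_3}\Delta_2$, which annihilates both $\partial_{\tau_1}$ and $\partial_{\tau_2}$; thus $T_p\cL=\ker dF_p$ contains the full vertical direction of $\varpi$, so $d\varpi|_{T_p\cL}$ has rank at most three, again violating submersivity onto the four-dimensional $\cM$.

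There is no genuine obstacle to overcome: once the defining equation of $\cL$ and the submersion condition are translated into the local coordinates from Section \ref{FIOsection}, the argument reduces to a single-line dimension count, and the conclusion extends from the base point to a neighborhood either by applying the same reasoning at each nearby base point or by continuity of $(\Delta_1,\Delta_2)$.
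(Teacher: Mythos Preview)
Your proof is correct and takes a cleaner route than the paper's. Both arguments proceed by contradiction, assuming $\Delta_1=\Delta_2=0$ at a point, and both ultimately rely on the submersion hypothesis \eqref{varpi}. The paper's argument is more computational: it first invokes the fold condition on $\pi_L$ to obtain $\tau_1^\circ\partial_{y_3}\Delta_1+\tau_2^\circ\partial_{y_3}\Delta_2\neq 0$, then applies the implicit function theorem to solve $\tau_1\Delta_1+\tau_2\Delta_2=0$ for $y_3=\fy_3(x,\tau)$, computes $\partial_{\tau_i}\fy_3$ by implicit differentiation, and finally uses the submersion hypothesis to show that at least one $\partial_{\tau_i}\fy_3$ is nonzero, which via the implicit differentiation formula forces some $\Delta_i\neq 0$. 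Your argument bypasses the implicit function theorem entirely: once $\cL$ is known to be a smooth hypersurface (which the fold condition guarantees), the vanishing of both $\Delta_i$ makes $\partial_{\tau_1},\partial_{\tau_2}$ tangent to $\cL$, so the vertical space of $\varpi$ lies inside $T_p\cL$ and $d(\varpi|_\cL)$ has rank at most three---an immediate linear-algebra contradiction to submersivity onto the four-dimensional $\cM$. Your infinitesimal version is essentially a one-line proof once the local description of $\cL$ is in place; the paper reaches the same conclusion through a longer detour, though its intermediate computation (the formula for $\partial_{\tau_i}\fy_3$) is not reused elsewhere.
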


\begin{proof} 
By continuity we have to check $|\Delta_1|+|\Delta_2|\neq 0$ at  $P^\circ$.

Let $\tau^\circ\in \bbR^2\setminus \{0\} $ and let 
$\xi^\circ=\tau_1^\circ S^1_x(a^\circ,b^\circ)+\tau_2^\circ S^2_x(a^\circ,b^\circ)$. Clearly if $(a^\circ,\xi^\circ)\notin \pi_L(\cL)$ then 
$\tau_1^\circ \Delta_1(a^\circ,b^\circ)
+\tau_2^\circ \Delta_2(a^\circ,b^\circ) \neq 0$ 
and therefore we may assume that 
$(a^\circ,\xi^\circ)\in \pi_L(\cL)$, i.e. 
\[
\tau_1^\circ \Delta_1(a^\circ,b^\circ)
+\tau_2^\circ \Delta_2(a^\circ,b^\circ) = 0.\]
Let $V_L$ be a kernel field which we may write as 
\[V_L =\sum_{i=1}^2\alpha_i(x,\tau) \frac{\partial}{\partial \tau_i} 
+
\alpha_3(x,\tau) \frac{\partial}{\partial y_3} +\sum_{i=1}^3 \beta_i(x,y_3,\tau)\frac\partial{\partial x_i}\]
where $\beta_i=0$, by \eqref{tildepiL}.  We have \begin{multline*}
 V_L (\tau_1 \Delta_1+\tau_2\Delta_2)\big|_{(a^\circ,b^\circ,\tau^\circ) }
= \\
\sum_{i=1}^2 \alpha_i(a^\circ,\tau^\circ)\Delta_i(a^\circ,b^\circ) + \alpha_3(a^\circ,\tau^\circ) \sum_{i=1}^2 \tau_i^\circ \frac{\partial \Delta_i}{\partial y_3} (a^\circ,b^\circ).
\end{multline*}

We argue by contradiction and assume that \Be \label{contra}\Delta_i(a^\circ,b^\circ)=0, \,\,i=1,2.\Ee
By assumption $V_L(\det \pi_L)\neq 0$ on $\cL$. Using \eqref{contra} we get
\Be\label{foldcond}
 \tau_1 ^\circ\frac{\partial\Delta_1 }{\partial y_3} 
 (a^\circ,b^\circ) +\tau_2^\circ\frac{\partial \Delta_2}{\partial y_3} (a^\circ,b^\circ) \neq 0.
\Ee
Hence  can, for $(\frac\tau{|\tau|},x,y_3)$ near  $(\frac{\tau^\circ}{|\tau^\circ|}, a^\circ, b^\circ)$,  solve 
$\tau_1\Delta_1  +\tau_2\Delta_2 =0$ in $y_3$
and obtain a function $\fy_3(\tau_1,\tau_2)$, homogeneous of degree $0$,  so that 
$$
\tau_1\Delta_1 (x,y_3) +\tau_2\Delta_2(x,y_3) =0 
\iff y_3=\fy_3(x,\tau).$$
Implicit differentiation gives
\Be\label{impl}  \frac{\partial \fy_3}{\partial \tau_i} = -\frac{\Delta_i(x, \tau, \fy_3)}{\tau_1 \partial_{y_3}\Delta_1
+\tau_2 \partial_{y_3}\Delta_2}, \,\,\,i=1,2.
\Ee

Now since we assume that  $\varpi : \cL\to \cM$ is a submersion
the differential of the map
$(x,\tau)\mapsto (x, S^1(x, \fy_3(x,\tau)), S^2(x, \fy_3(x,\tau)), \fy_3(\tau))$ is surjective.
This implies that
$$\rank \begin{pmatrix} \partial_{y_3}S^1(x, \fy_3) \partial_{\tau_1}\fy_3 &
\partial_{y_3}S^1(x, \fy_3) \partial_{\tau_2}\fy_3
\\
\partial_{y_3}S^2(x, \fy_3) \partial_{\tau_1}\fy_3 &
\partial_{y_3}S^2(x, \fy_3) \partial_{\tau_2}\fy_3
\\
 \partial_{\tau_1}\fy_3 &
\partial_{\tau_2}\fy_3\end{pmatrix}\,=\, 1
$$
 and thus  $|\partial_{\tau_1}\fy_3| + 
|\partial_{\tau_2}\fy_3|>0$. But by \eqref{impl} this implies that at least one of the $\Delta_i(a^\circ,b^\circ)$ is nonzero, yielding a contradiction to \eqref{contra}.
\end{proof}

It will be useful to explicitly construct  a  kernel field $V_L$ in a conic neighborhood of $\cL$. Notice that $\cL=\cL^+\cup \cL^{-}$ where $\cL^{\pm}=$
\[
\{(x, \pm \rho(-\Delta_2S^1_x + \Delta_1S^2_x), S^1(x,y_3), S^2_{x,y_3}, y_3 , \tau,\pm\rho(\Delta_2 S^1_{y_3} -\Delta_1 S^2_{y_3})): \rho>0\}.
\]
We  identify $\pi_L$ with $\tilde \pi_L$ as in \eqref{tildepiL}.

\begin{lemma} 
Define $\Gamma_i(x,y_3)$, $i=1,2$, by  \begin{subequations}\label{Gadef}
\begin{align}\label{Ga1def}
\Gamma_1&= \det\begin{pmatrix} S^1_{x}&S^2_{x,y_3} & S^1_{xy_3}\end{pmatrix}\,,
\\ \label{Ga2def}
\Gamma_2&= \det\begin{pmatrix} S^1_{xy_3} &S^2_{x } & S^2_{xy_3}\end{pmatrix}\,.
\end{align}
\end{subequations} 
Let
\Be \label{VLdef}
V_L^{\pm} =
\frac{\pm |\tau|}{\sqrt{\Delta_1^2+\Delta^2_2}} 
 \Big( 
\Gamma_2(x,y_3) \frac{\partial}{\partial \tau_1}-
\Gamma_1(x,y_3) \frac{\partial}{\partial \tau_2}\Big) + 
\frac{\partial}{\partial y_3}.
\Ee
Then 
 $V_L^+$, $V_L^-$  are  kernel fields for $\tilde \pi_L$  near $\cL^+$,
 $\cL^-$, respectively.
\end{lemma}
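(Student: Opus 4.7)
The plan is to check the kernel condition pointwise on $\cL^{\pm}$. Since $\tilde\pi_L$ from \eqref{tildepiL} fixes the $x$-coordinates, its Jacobian in the coordinates $(x_1,x_2,x_3,\tau_1,\tau_2,y_3)$ is block lower triangular with identity upper-left block. Consequently a tangent vector
\[
V=\sum_{i=1}^3 b_i\,\partial_{x_i}+a_1\,\partial_{\tau_1}+a_2\,\partial_{\tau_2}+a_3\,\partial_{y_3}
\]
lies in $\ker(D\tilde\pi_L)_P$ if and only if $b_1=b_2=b_3=0$ and, as an identity of vectors in $\bbR^3$,
\[
a_1\,S^1_x+a_2\,S^2_x+a_3\bigl(\tau_1 S^1_{xy_3}+\tau_2 S^2_{xy_3}\bigr)=0.
\]
Both $V_L^{\pm}$ have vanishing $x$-components and $\partial_{y_3}$-coefficient equal to $1$, and by Lemma \ref{Deltas} we have $\Delta_1^2+\Delta_2^2>0$ near the reference point so that $V_L^{\pm}$ are genuine smooth vector fields there.

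Next, on $\cL$ the defining equation $\tau_1\Delta_1+\tau_2\Delta_2=0$ together with Lemma \ref{Deltas} forces
\[
(\tau_1,\tau_2)=\pm\rho(-\Delta_2,\Delta_1),\qquad \rho>0,
\]
with the sign selecting $\cL^{+}$ or $\cL^{-}$; in particular $|\tau|=\rho\sqrt{\Delta_1^2+\Delta_2^2}$ so the scalar prefactor in \eqref{VLdef} evaluates to $\pm\rho$ on $\cL^{\pm}$. Substituting the $\tau$-coefficients from \eqref{VLdef} together with $a_3=1$ into the kernel equation and factoring out the common $\pm\rho$, the kernel condition reduces to the $\bbR^3$-valued identity
\[
\Gamma_2\,S^1_x-\Gamma_1\,S^2_x-\Delta_2\,S^1_{xy_3}+\Delta_1\,S^2_{xy_3}=0.
\]

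The last step is to recognize this as the universal linear-dependence relation among four vectors in $\bbR^3$: for any $u,v,w,z\in\bbR^3$,
\[
\det(v,w,z)\,u-\det(u,w,z)\,v+\det(u,v,z)\,w-\det(u,v,w)\,z=0.
\]
Applied with $u=S^1_x$, $v=S^2_x$, $w=S^1_{xy_3}$, $z=S^2_{xy_3}$ and unpacking \eqref{Deltadef} and \eqref{Gadef}, using the sign flips $\Gamma_1=\det(S^1_x,S^2_{xy_3},S^1_{xy_3})=-\det(S^1_x,S^1_{xy_3},S^2_{xy_3})$ and analogously $\det(S^2_x,S^1_{xy_3},S^2_{xy_3})=-\Gamma_2$ (coming from one column transposition each), this is exactly the claimed identity. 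No conceptual obstacle is expected; the only mildly delicate point is sign-bookkeeping in matching the coefficients to $\Gamma_1,\Gamma_2,\Delta_1,\Delta_2$ with the signs dictated by \eqref{Gadef}.
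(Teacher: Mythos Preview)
Your proof is correct, and the reduction to the vector identity
\[
\Gamma_2\,S^1_x-\Gamma_1\,S^2_x-\Delta_2\,S^1_{xy_3}+\Delta_1\,S^2_{xy_3}=0
\]
is exactly the paper's equation \eqref{kernelfieldprop}. The difference lies in how this identity is verified. The paper argues indirectly: it notes that $\{S^1_x,\,S^2_x,\,\Delta_1 S^1_{xy_3}+\Delta_2 S^2_{xy_3}\}$ is a basis (since the determinant equals $\Delta_1^2+\Delta_2^2\neq 0$), hence so is the triple of pairwise cross products, and then checks that the left-hand side is orthogonal to each of $S^1_x\wedge S^2_x$ and $S^i_x\wedge(\Delta_1 S^1_{xy_3}+\Delta_2 S^2_{xy_3})$, $i=1,2$. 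You instead recognize the identity as the specialization of the universal linear-dependence relation for four vectors in $\bbR^3$ (equivalently, the vanishing of the $4\times 4$ determinant obtained by bordering, or Cramer's rule). Your route is shorter and more transparent, collapsing three separate orthogonality checks into a single known identity; the paper's route is more hands-on but perhaps more self-contained for a reader who does not have the four-vector identity at the ready. Both approaches are of course equivalent in substance.
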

\begin{proof}
We take $\tau=\pm \rho(-\Delta_2, \Delta_1)$ and then the assertion reduces to showing that
\Be \label{kernelfieldprop}
\Gamma_2 S^1_x-\Gamma_1S^2_x -\Delta_2 S^1_{xy_3}+\Delta_1 S^2_{xy_3} \Big|_{(x,y_3)} =0.
\Ee

Denote the left hand side by $W$. We  first observe that
$$\det \begin{pmatrix} S^1_x&S^2_x& \Delta_1S^1_x+\Delta_2S^2_x\end{pmatrix} =\Delta_1^2+\Delta_2^2
$$
which is  nonzero, by Lemma \ref{Deltas}.
We use that  three vectors $v_1, v_2, v_3\in \bbR^3$ form  a basis of $\bbR^3$ if and only if the 
vector products $v_1\wedge v_2$, $v_1\wedge v_3$, $v_2\wedge v_3$ form  a basis, and apply this fact 
to 
$\{S^1_x,\,S^2_x,\,\Delta_1 S^1_{xy_3}+\Delta_2 S^2_{xy_3}\}$. Now  $W=0$ follows by checking 
$\inn{W}{S^i_x\wedge (\Delta_1S^1_{xy_3}+\Delta_2 S^2_{xy_3})}=0,$ for $i=1,2$, and
$\inn{W}{S^1_x\wedge S^2_x}=0$.
These are  straightforward to verify.
\end{proof}

We now consider the fibers in $T^*\Omega_L$ of $\pi_L(\cL)$, namely
\begin{subequations}
\begin{multline}\Sigma_x:= \, \{ (x,\tau_1S^1_x(x,y_3)+ \tau_2S^2_x(x,y_3)):\,\\ \tau_1\Delta_1(x,y)+\tau_2\Delta_2(x,y_3)=0, |\tau|\neq 0\}.
\end{multline}
$\Sigma_x$ is a  cone which splits as $\cup_\pm \Sigma_x^\pm$ where 
\Be \label{Xi} \Sigma_x^\pm = \{\pm \rho \Xi(x,y_3):\rho> 0\}\Ee
with
\Be \Xi(x,y_3)= -\Delta_2(x,y_3) S^1_x(x,y_3)+\Delta_1(x,y_3) S^2_x(x,y_3)\,.
\Ee
\end{subequations}
Next, for $\rho>0$
\begin{subequations}
\Be\label{foldcondka} 
V_L^\pm (\tau_1\Delta_1(x,y_3)+\tau_2\Delta_2(x,y_3))\Big|_{ \tau=\pm\rho(-\Delta_2, \Delta_1)}
=
\pm \rho\kappa(x,y_3)
\Ee
where 
\Be\label{kappadef}\kappa(x,y_3)= 
\Gamma_2\Delta_1-\Gamma_1\Delta_2
+\Delta_1\Delta_{2,y_3}-\Delta_2\Delta_{1,y_3}
 \Big|_{(x,y_3)}.
 \Ee
 \end{subequations}
This quantity is nonzero, by the assumption that $\pi_L$ projects with a fold singularity.

The following lemma will be crucial to establish the curvature properties of the cones $\Sigma_x$.
\begin{lemma}  \label{curvcalc}
Let 
$\ka$ be as in \eqref{kappadef}.
Then
$$\det \begin{pmatrix}
\Xi&\Xi_{y_3}&\Xi_{y_3y_3}\end{pmatrix}\Big|_{(x,y_3)}=-[\kappa(x,y_3)]^2.
$$
\end{lemma}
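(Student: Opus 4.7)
The plan is to compute the triple product $\det(\Xi, \Xi_{y_3}, \Xi_{y_3y_3}) = (\Xi \times \Xi_{y_3}) \cdot \Xi_{y_3y_3}$ by exploiting the observation that both $\Xi$ and $\Xi_{y_3}$ lie in the two-plane $\mathrm{span}(A, B)$ where $A = S^1_x$, $B = S^2_x$. For $\Xi = -\Delta_2 A + \Delta_1 B$ this is immediate; writing $N = A \times B$ (so that $N \cdot S^i_{xy_3} = \Delta_i$ by the very definition of $\Delta_i$), a direct computation yields $\Xi_{y_3} \cdot N = -\Delta_2\Delta_1 + \Delta_1\Delta_2 = 0$. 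Consequently $\Xi \times \Xi_{y_3} = \mu N$ for some scalar $\mu$, and the determinant reduces to $\mu\,(\Xi_{y_3y_3} \cdot N)$. The proof then amounts to computing these two scalars and checking that their product equals $-\kappa^2$.

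For $\Xi_{y_3y_3} \cdot N$, I would differentiate the defining identity $\Delta_i = \det(A, B, S^i_{xy_3})$ in $y_3$ to obtain $N \cdot S^i_{xy_3y_3} = \Delta_{i,y_3} - \Gamma_i$. Substituting into the six-term expansion of $\Xi_{y_3y_3}$ and using $A \cdot N = B \cdot N = 0$, a short simplification yields $\Xi_{y_3y_3} \cdot N = -\kappa$; this is essentially where the definition \eqref{kappadef} crystallizes.

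The identification $\mu = \kappa$ is the main bookkeeping step. I would decompose $S^1_{xy_3} = \gamma_1 A + \gamma_2 B + \gamma_3 N$ and $S^2_{xy_3} = \delta_1 A + \delta_2 B + \delta_3 N$ in the (non-orthogonal) basis $\{A, B, N\}$, noting that $\gamma_3 |N|^2 = \Delta_1$ and $\delta_3 |N|^2 = \Delta_2$. For vectors $u = u^1 A + u^2 B$, $v = v^1 A + v^2 B$ lying in $\mathrm{span}(A, B)$, one has $u \times v = (u^1 v^2 - u^2 v^1) N$; applying this to $\Xi$ and $\Xi_{y_3}$ gives an explicit polynomial formula for $\mu$ in terms of $\Delta_i, \Delta_{i,y_3}, \gamma_i, \delta_i$. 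In parallel, expanding $\Gamma_1 = A \cdot (S^2_{xy_3} \times S^1_{xy_3})$ and $\Gamma_2 = S^1_{xy_3} \cdot (B \times S^2_{xy_3})$ in the same basis, and using the elementary identity $A \cdot (B \times N) = |N|^2$, yields the cofactor relations
\[
\Gamma_1 = \delta_2 \Delta_1 - \Delta_2 \gamma_2, \qquad \Gamma_2 = \Delta_2 \gamma_1 - \delta_1 \Delta_1.
\]
A direct substitution then confirms $\mu = \Delta_1\Delta_{2,y_3} - \Delta_2\Delta_{1,y_3} + \Gamma_2\Delta_1 - \Gamma_1\Delta_2 = \kappa$, and combining the two scalar computations gives $\det(\Xi, \Xi_{y_3}, \Xi_{y_3y_3}) = \mu \cdot (-\kappa) = -\kappa^2$.

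The main obstacle is the coordinate bookkeeping needed for the identification $\mu = \kappa$; once the planarity of $\Xi$ and $\Xi_{y_3}$ is recognized, everything reduces to expanding $3\times 3$ determinants in the $(A, B, N)$ frame.
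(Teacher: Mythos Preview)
Your proposal is correct and somewhat cleaner than the paper's argument. The paper proceeds by brute force: it writes out $\Xi$, $\Xi_{y_3}$, $\Xi_{y_3y_3}$ explicitly, expands $\Xi\wedge\Xi_{y_3}$ as a two-term sum involving $S^1_x\wedge S^2_x$ and $(\Delta_1 S^2_x-\Delta_2 S^1_x)\wedge(\Delta_1 S^2_{xy_3}-\Delta_2 S^1_{xy_3})$, and then computes $\inn{\Xi\wedge\Xi_{y_3}}{\Xi_{y_3y_3}}$ as a sum of five terms $A_1,\dots,A_5$. The simplification of $A_5$ uses the kernel-field identity $\Delta_1 S^2_{xy_3}-\Delta_2 S^1_{xy_3}=-\Gamma_2 S^1_x+\Gamma_1 S^2_x$ (your cofactor relations in disguise), after which everything collapses to $-(E+\Delta_1\Gamma_2-\Delta_2\Gamma_1)^2=-\kappa^2$.

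Your route short-circuits most of this by first observing $\Xi_{y_3}\cdot N=0$, so $\Xi\wedge\Xi_{y_3}=\mu N$ from the outset; the determinant then factors as the product of two scalars $\mu$ and $N\cdot\Xi_{y_3y_3}$, each of which is computed to be $\pm\kappa$. The paper's identity \eqref{kernelfieldprop} is effectively replaced by your $(A,B,N)$-frame decomposition of $S^i_{xy_3}$ together with the cofactor formulas for $\Gamma_1,\Gamma_2$. What you gain is a more transparent structure (the factorization into $\kappa\cdot(-\kappa)$ is visible rather than emerging at the end of a page of algebra); what the paper's approach gains is that it needs no auxiliary basis decomposition, only the single preestablished identity \eqref{kernelfieldprop}.
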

\begin{proof}We have
\begin{align*}
\Xi&= -\Delta_2 S^1_x+\Delta_1S^2_x,
\\
\Xi_{y_3}&= -\Delta_{2, y_3} S^1_x+\Delta_1 S^2_x- \Delta_2 S^1_{xy_3}+\Delta_1 S^2_{xy_3},
\end{align*} and
\begin{align*}
\Xi_{y_3y_3}=&-\Delta_{2,y_3y_3}S^1_x+\Delta_{1,y_3y_3} S^2_x \\&-2\Delta_{2, y_3}S^1_{xy_3}
+2\Delta_{1, y_3} S^2_{xy_3} -\Delta_2 S^1_{xy_3y_3}+\Delta_1 S^2_{xy_3y_3}
\end{align*}
where all expressions are evaluated at $(x,y_3)$.
Also
\begin{align*}
\Xi\wedge \Xi_{y_3}=&
(\Delta_1 \Delta_{2, y_3}-\Delta_2 \Delta_{1, y_3}) (S^1_x\wedge S^2_x)
\\&+ (\Delta_1 S^2_x- \Delta_2S^1_x)\wedge 
(\Delta_1 S^2_{xy_3}-\Delta_2 S^1_{xy_3}).
\end{align*}
Define
\[E= \Delta_1\Delta_{2,y_3}- \Delta_2 \Delta_{1,y_3}.\]
Diligent computation yields 
\begin{equation}
\inn{\Xi\wedge \Xi_{y_3}}{\Xi_{y_3y_3}}=\sum_{i=1}^5A_i
\end{equation}
where
\begin{align*} 
A_1&=-2E^2,
\\
A_2&= E\big(
\Delta_1 \det \begin{pmatrix} S^1_x& S^2_x &S^2_{xy_3y_3}\end{pmatrix}
-\Delta_2 \det \begin{pmatrix} S^1_x& S^2_x &S^1_{xy_3y_3}\end{pmatrix}\big),
\\
A_3 &=(\Delta_2\Delta_{1,y_3y_3}-\Delta_1\Delta_{2, y_3y_3}) (\Delta_1\Delta_2-\Delta_2\Delta_1)=0,
\\
A_4&=
2E(\Delta_2 \Gamma_1- \Delta_1 \Gamma_2)
\end{align*}
and 
\begin{align*}
A_5=&-\Delta_2\Delta_1 \inn{S^1_x\wedge S^2_{xy_3}}{-\Delta_2 S^1_{xy_3y_3}+\Delta_1 S^2_{xy_3y_3}}
\\
&+\Delta_2^2 \inn{ S^1_x\wedge S^1_{xy_3}}{-\Delta_2 S^1_{xy_3y_3}+\Delta_1 S^2_{xy_3y_3}}
\\
&+\Delta_1^2 \inn{ S^2_x\wedge S^2_{xy_3}}{-\Delta_2 S^1_{xy_3y_3}+\Delta_1 S^2_{xy_3y_3}}
\\
&-\Delta_1\Delta_2 \inn{ S^2_x\wedge S^1_{xy_3}}{-\Delta_2 S^1_{xy_3y_3}+\Delta_1 S^2_{xy_3y_3}}\,.
\end{align*}

We rewrite the expression $A_5=  A_{5,1}+ A_{5,2}$ where
\begin{align*}
A_{5,1}&=\Delta_2^2 \det \begin{pmatrix} 
S^1_x&\Delta_1 S^2_{xy_3}&S^1_{xy_3y_3} \end{pmatrix}
-\Delta_2^2 
\det \begin{pmatrix} 
S^1_x&\Delta_2 S^1_{xy_3}&S^1_{xy_3y_3} \end{pmatrix}
\\
&- \Delta_1\Delta_2 
\det \begin{pmatrix} 
S^2_x&\Delta_1 S^2_{xy_3}&S^1_{xy_3y_3} \end{pmatrix}
+ \Delta_1\Delta_2 
\det \begin{pmatrix} 
S^2_x&\Delta_2S^1_{xy_3}&S^1_{xy_3y_3} \end{pmatrix}
\end{align*}
and
\begin{align*}
A_{5,2}&=\Delta_1^2 \det \begin{pmatrix} 
S^2_x&\Delta_1 S^2_{xy_3}&S^2_{xy_3y_3} \end{pmatrix}
-\Delta_1^2 
\det \begin{pmatrix} 
S^2_x&\Delta_2S^1_{xy_3}&S^2_{xy_3y_3} \end{pmatrix}
\\
&- \Delta_2\Delta_1 
\det \begin{pmatrix} 
S^1_x&\Delta_1 S^2_{xy_3}&S^2_{xy_3y_3} \end{pmatrix}
+ \Delta_2\Delta_1 
\det \begin{pmatrix} 
S^2_x&\Delta_2S^2_{xy_3}&S^2_{xy_3y_3} \end{pmatrix}.
\end{align*}
Now by \eqref{kernelfieldprop} we have 
\[
\Delta_1 S^2_{xy_3}-\Delta_2 S^1_{xy_3}= -\Gamma_2 S^1_x+\Gamma_1 S^2_x.
\]
We use this to simplify  $A_{5,1}$ and $A_{5,2} $ to
\begin{align*}
A_{5,1} &= \Delta_2 (\Gamma_1\Delta_2-\Gamma_2\Delta_1) \det\begin{pmatrix} 
S^1_x&S^2_x&S^1_{xy_3y_3}\end{pmatrix}\,,
\\
A_{5,2} &= -\Delta_1 (\Gamma_1\Delta_2-\Gamma_2\Delta_1) \det\begin{pmatrix} 
S^1_x&S^2_x&S^2_{xy_3y_3}\end{pmatrix}\,.
\end{align*}
We combine these formulae with the previous ones for $A_1,\dots, A_4$ and use that $A_3=0$. We get
\begin{align*} 
&\sum_{j=1}^5 A_j\, =\,  -2E^2 +2E (\Delta_2\Gamma_1-\Delta_1\Gamma_2)\,+\,
\\&
 (E+\Delta_1\Gamma_2-\Delta_2\Gamma_1) \big( \Delta_1 \det\begin{pmatrix} S^1_x &S^2_x &S^2_{xy_3y_3}\end{pmatrix}
 -\Delta_2 \det\begin{pmatrix} S^1_x &S^2_x &S^1_{xy_3y_3}\end{pmatrix}\big)\,.
 \end{align*}
 Now using
 \[\det\begin{pmatrix} S^1_x &S^2_x &S^i_{xy_3y_3}\end{pmatrix}=\Delta_{i,y_3}- \Gamma_i, \quad i=1,2,\]
 we obtain
 \begin{align*}
 \sum_{j=1}^5 A_j\, &=\,  -2E^2 +2E (\Delta_2\Gamma_1-\Delta_1\Gamma_2)\\ &\quad\,\,+\,
  (E+\Delta_1\Gamma_2-\Delta_2\Gamma_1)
  (\Delta_1\Delta_{2,y_3}-\Delta_1\Gamma_2 -\Delta_2 \Delta_{1,y_3} +\Delta_2 \Gamma_1)
  \\&= -E^2- 2E(\Delta_1\Gamma_2-\Delta_2\Gamma_1)-(\Delta_1\Gamma_2-\Delta_2\Gamma_1)^2
    \\&= -(E+\Delta_1\Gamma_2-\Delta_2\Gamma_1)^2\,.
   \end{align*}
   which gives the assertion.
\end{proof}

We now examine the curvature properties of the cone $\Sigma_x=\{\rho \Xi(y_3)\}$.
Lemma \ref{curvcalc} 
 implies that $\Xi\wedge\Xi_{y_3}\neq 0$. 
The second fundamental form at $\rho \Xi(x,y_3)$ with respect to the unit normal
$N= \frac{\Xi\wedge\Xi_{y_3}}
{|\Xi\wedge\Xi_{y_3}|}$ is given by
\[
\begin{pmatrix}\rho \inn{\Xi_{y_3y_3}}{N} 
& \inn{\Xi_{y_3}}{N}
\\
 \inn{\Xi_{y_3}}{N} &0
\end{pmatrix}\,=\,
\begin{pmatrix} \rho \inn{\Xi_{y_3y_3}}{N} 
& 0 
\\
 0 &0
\end{pmatrix}\,.\]
Now, by Lemma \ref{curvcalc}, 
\Be\label{princ} \rho \inn{\Xi_{y_3y_3}}{N} = \frac{\rho}{|\Xi\wedge \Xi_{y_3}|} \det \begin{pmatrix}
\Xi&\Xi_{y_3}&\Xi_{y_3y_3}\end{pmatrix}= \frac{-\rho \ka(x,y_3)^2}{|\Xi\wedge \Xi_{y_3}|},\Ee
and the fold condition says that  $\ka$ does not vanish. Hence   $\Sigma_x$ is a two-dimensional cone such that everywhere there is exactly  one nonvanishing principal curvature, and it is given by \eqref{princ}.

\section{Some model operators}\label{example-sect}
 The examples motivating the present paper originate from 
 problems in harmonic  analysis and  integral
 geometry. 
We list a few of them below. The notation used in each of these examples is self-contained.

\subsection{\it Averages along curves with nonvanishing curvature and
  torsion} \label{example1}
 Let $\gamma : I \rightarrow \mathbb
R^3$ be a compact space curve with nonvanishing curvature and torsion. Then,
the integral operator $\cA$ in \eqref{translinv}
is an  example of  a 
Fourier integral operator of order $-1/2$  with two-sided fold
 singularities. Clearly the projection $\varpi$ in \eqref{varpi} is a submersion. Thus we recover the result that 
 $\cA: L^p\to L^p_{1/p}$ for $p>4$ which is known by a combination of the results in \cite{pr-se} and 
 \cite{bourgain-demeter}. The theorem in this paper shows that the $L^p_\comp\to L^p_{1/p}$ estimate holds true for small variable perturbations of the translation invariant case.

\subsection{\it Restricted $X$-ray transforms in $\mathbb R^3$}\label{xraysubsection}
A restricted $X$-ray transform in $\bbR^3$ is the restriction of the X-ray transform to a
 {\it line complex}, that is,  
a three-dimensional manifold of lines.
Under a suitable {\it well-curvedness assumption} it was shown in \cite{GU} (see also \cite{GS}) that (local versions) of this operator are Fourier integral operators of order $-1/2$  for which the projection $\pi_R$ has Whitney folds. For a class of generic line complexes we have two-sided fold singularities but this is not the case for the important class satisfying Gelfand's admissibility condition (see \cite{GU}) which is relevant for invertibility of the restricted X-ray transform.
The optimal $L^2\to L^2_{1/4}$-Sobolev regularity for the latter was
obtained in  \cite {GU-comp}, and  can also be seen as a part of a result on more general Fourier integral operators with one-sided fold singularities in \cite{GS}. 

We discuss a model case. 
Let $I$ be a compact interval and $\gamma:I \rightarrow \mathbb R^2$ a smooth regular curve 
$\gamma(t)=(t, g(t))$, $t\in I$.
We assume that $\gamma$ has 
{\it  non-vanishing curvature}, i.e. 
\Be\label{nonvancurv} g''(t)\neq 0, \quad t\in I.\Ee
 Let $\beta\in (-1,1)$ and let $e_2=(0,1)\in \bbR^2$.  For $f \in C_0^{\infty}(\mathbb R^3)$ we define
\Be\label{betarestrxray} \mathcal R_{\beta}  f(x_1,x_2, t) = \chi_1(t) 
\int f(x_1+st, x_2+ s(\beta x_2 +g(t)), s) \chi_2(s)ds \Ee
where $x' = (x_1, x_2)$ and $\chi_1, \chi_2$ are smooth real-valued
functions, with $ \chi_1$ supported in the interior of $I$ and $\chi_2$ with compact support contained in $\bbR\setminus \{-\beta^{-1}\}$.

We examine the  adjoint operator which is given by 
\[ R^*_{\beta} h(x)= \int h(S^1(x,y_3), S^2(x,y_3), y_3) \chi_2(x_2)\chi_1(y_3) dy_3\]
with 
\[ S^1(x,y_3)= x_1-x_3  y_3, \qquad
S^2(x,y_3)= \frac{x_2-x_3g(y_3)}{1+x_3\beta}.\]
Then  $\begin{pmatrix}
S^1_x&S^2_x &\tau_1S^1_{xy_3}+\tau_2S^2_{xy_3}
\end{pmatrix}$ is given by 
\[
\begin{pmatrix}
1&0&0
\\
0 &(1+x_3\beta)^{-1} &0
\\
-y_3
& \frac{-g(y_3)-\beta x_2}{(1+x_3\beta)^2} 
&
-\tau_1-\tau_2
\frac{g'(y_3)}{(1+x_3\beta)^2}
\end{pmatrix}
\]
and hence 
$$\det\pi_L=\tau_1\Delta_1+\tau_2\Delta_2
= -(1+x_3\beta)^{-1} \big(\tau_1+\tau_2
\frac{g'(y_3)}{(1+x_3\beta)^2}
\big)\,.
$$ Now $\partial /\partial y_3$ is  a kernel field and the  fold condition holds iff and only if 
\eqref{nonvancurv} holds on $I$.

The cones $\Sigma_x$ are given by
\[
 \Sigma_x = \left\{ \xi \in \mathbb R^3 : \xi = \lambda (-g'(t), 1+\beta x_3, tg'(t) - g(t)-\beta x_2 , \; \lambda \in \mathbb R,\, t\in I \right\}. \]    
To check that the  cone $\Sigma_x$ has one nonvanishing curvature everywhere one verifies that 
the plane curve
$\Gamma(t) =(-g'(t), tg'(t)-g(t))$
has nonvanishing curvature.
This holds since $\Gamma_1'\Gamma_2''-\Gamma_1''\Gamma_2'(t) = -(g''(t))^2$.

In order to apply our main result one also needs to check that the projection $\pi_R $ (for the adjoint $R_{\beta}^*$)  has only fold singularities; this turns out to be the case when $\beta\neq 0$.
$\pi_R$ is given by
$$(x_1,x_2, x_3, \tau_1,\tau_2,y_3)\mapsto
(S^1(x,y_3), S^2(x,y_3), y_3, \tau_1, \tau_2, \tau_1 S^1_{y_3}+\tau_2 S^2_{y_3})
$$
and a calculation shows that 
$V_R = y_3 \frac{\partial}{\partial x_1}
+\frac{\partial}{\partial x_3}
$ is a kernel field.
Then $$V_R (\tau_1\Delta_1+\tau_2\Delta_2)= -\tau_2 g'(y_3)\frac{2\beta}{(1+\beta x_3)^3}\,.
$$
Thus, if $\beta\neq 0$, $\pi_R$ has only fold singularities. 
Now  Theorem \ref{mainthm} implies that for $\beta\neq 0$  the operator 
$R_{\beta}^* $ maps $L^p$ to $L^p_{1/p}$ for $p>4$, and more generally
$L^p_\alpha$ to $L^p_{\alpha+1/p}$.  Hence 
\Be R_{\beta}: L^p\to L^p_{1/p'} , \, 1<p<4/3, \, 
\label{xrayest}\Ee
when $\beta\neq 0$.
Our theorem does not apply to the case $\beta=0$, when $\pi_R$ has maximal degeneracy (a blowdown singularity). However by a rather straightforward argument it was shown  in \cite{Pramanik-Seeger}
that 
\eqref{xrayest} remains valid if $\beta=0$ (provided one uses the result by Bourgain and Demeter in 
 conjunction with \cite{Pramanik-Seeger}). This leads one  to   conjecture that the assumption on $D\pi_R$ in Theorem \ref{mainthm} can be dropped.
  
\subsection
{\it Averages along  curves in $\mathbb H^1$} 
Convolution operators on noncommutative groups can often be analyzed as generalized Radon transforms. Let us consider the Heisenberg group 
$\mathbb H^1$ which is $\mathbb R^3$ with the group multiplication defined by
\[x \cdot y = \bigl(x_1 + y_1, x_2 + y_2, x_3 + y_3 + \tfrac{1}{2}(x_1 y_2 - x_2 y_1)\bigr). \]

\subsubsection{Measures on curves in the plane.} 
Let $I$ be a bounded open interval and $g\in C^\infty$. We consider convolution on $\bbH^1$ with  a measure on $\bbR^2\times \{0\}$ supported on $\{(t, g(t), 0): t\in I\}$ where $g''(t)\neq 0$ for $t\in I$. For $\chi\in C^\infty_0(I)$ 
define $\mu$ by 
\[ \langle \mu, f \rangle := \int_{\gamma} f(t, g(t),0) \chi(t) dt\]
and the convolution
\[\mathcal \cA_1 f(x) := f \ast \mu(x) = \int f \bigl(x' - y', x_3 - \tfrac{1}{2}(x_1 y_2 - x_2 y_1)\bigr) d \mu_0(y'). \]
Then $\cA_1 f(x) $ can be written as 
$\int  f(y_1, S^2(x,y_1), S^3(x,y_1)) \chi(x_1-y_1) dy_1$
with 
\begin{align*}
S^2(x,y_1)&= x_2-g(x_1-y_1), \\ S^3(x,y_1)&= x_3- \frac{x_1}{2}g(x_1-y_1)+\frac{x_2}{2}(x_1-y_1).
\end{align*}
As  observed in \cite{MuSe},  $\mathcal A_1$ is a Fourier integral
operator 
with  folding canonical relation (i.e. $\pi_L$ and $\pi_R$) project with folds. Moreover 
$$\pi_L (N^*\cM)=\{(x, \tau_2 S_x^2(x,y_1)+\tau_3S_x^3(x,y_1)\}$$ and 
$\det \pi_L = g''(x_1-y_1) (\tau_2+\tau_3x_1/2)$ and thus  $\Sigma_x$ is given by the parametrization
$$\Xi(\tau_3,t) =
 \frac{\tau_3}{2} \big( x_2-g(t), -x_1+t,1\big).
 $$
 
 This example, and higher dimensional versions 
 were 
 considered in \cite{MuSe} for the $L^2$-Sobolev category, together with some refinements,  that yield sharp maximal function estimates on $\bbH^n$, $n\ge 2$. 
 The measure in the horizontal plane can also be replaced by other measures in other planes transversal to the center, in which case the estimates in \cite{MuSe} yield less satisfactory results for maximal function bounds. However in this case
  sharp $L^p$-Sobolev estimates and maximal function bounds for $n\ge 2$ have been recently established in \cite{acps}, using methods which are closely related to the current paper. For a more recent result on circular maximal functions on the Heisenberg group see \cite{bghs} where the case of Heisenberg radial functions is considered.

\subsubsection{Averages along space curves in $\mathbb H^1$} \label{momentcurveHsect}
A closely related example  was considered by
Phong and Stein \cite{PhSt}  and Secco \cite{secco}.
Let $\gamma_{\alpha} : I \rightarrow \mathbb H^1$ be the curve given by $\gamma_{\alpha}(s) = (s, s^2, \alpha s^3)$, where $\alpha$ is a real-valued parameter, and $I$ a bounded interval. Given a cutoff function $\chi \in C_0^{\infty}(I)$, let us consider the singular measure $\mu_{\alpha}$ on $\mathbb H^1$ supported on $\gamma_{\alpha}$ given by 
\[ \langle \mu_{\alpha}, f \rangle = \int_I f(\gamma_{\alpha}(s)) \chi(s) \, ds, \]
and the right convolution operator by $\mu_{\alpha}$:
\Be\label{convalphaH}
 \mathcal A_{2,\alpha} f(x) = f \ast \mu_{\alpha}(x) := \int f(x \cdot \gamma_{\alpha}(s)^{-1})\, ds, \qquad x \in \mathbb H^1. \Ee
As shown in \cite{PhSt} 
 $\mathcal A_{2,\alpha}$ is a Fourier integral operator, with
two-sided 
folds for $\alpha \ne \pm \frac{1}{6}$ and with one-sided folds for
$\alpha = \pm \frac{1}{6}$. A special role of the parameters $\pm \frac 16$  has also been observed by Secco
\cite{secco} in the context of $L^p\to L^q$ estimates. 
It is straightforward 
to verify that the 
projection $\pi_L$ in this problem is a fold if and only if 
$\alpha = \frac{1}{6}$, and the cone 
$\Sigma_x \subseteq \mathbb R^3_{\xi}$ is generated by the parabola 
\[ \xi_1 = \frac{x_2}{2} + 2(6 \alpha - 1)t^2, \quad \xi_2 = -\frac{x_1}{2} - (6\alpha - 1) t, \quad \xi_3 = 1. \]     
Our result  yields the  sharp $L^p$ regularity properties for all $\alpha\in \bbR\setminus\{\pm 1/6\}$ but it does not cover the cases $\alpha=\pm 1/6$. Bentsen \cite{bentsen}  
obtained a sharp $L^p$ regularity results 
for  a  class  of averaging operators over   curves  in the Heisenberg group for which one of $\pi_L$, $\pi_R$  is a fold and the other is a blowdown. 
It turns out that  in the case $\alpha=-1/6$ of \eqref{convalphaH} the  local regularity results follow by changes of variables directly from the regularity results  for the  restricted X-ray transform in \eqref{betarestrxray}
when $\beta=0$ (i.e. the case considered in \cite{Pramanik-Seeger}).

\section{Basic decompositions}\label{basicdecsect}
We decompose dyadically in $\tau$ (for large $\tau$).   Then for 
$|\tau|\approx 2^k$ we decompose further according to the size of $2^{-k}\det\pi_L$ which is approximately 
the size of $2^{-k}(\tau_1\Delta_1+\tau_2\Delta_2)$, 
which is also approximately the distance to the fold surface.
This decomposition is standard and goes back to
\cite{PhSt}
(with earlier precursors). 

Let $\eta_0 \in C_c^{\infty}(\mathbb R)$ be an even function so that $\eta_0(s) = 1$ for $|s| \leq \frac{1}{2}$ and supp$(\eta_0) \subset (-1, 1)$, and set $\eta_1(s) = \eta_0(\frac{s}{2}) - \eta_0(s)$. Then $\eta_0(s) + \sum_{k \geq 1}\eta_1(2^{1-k}s) \equiv 1$ for $s \geq 0$. Define
\begin{subequations} 
\begin{align}\label{defofchik}
\chi_k(x,y,\tau) &:= \chi(x,y) \eta_1(2^{1-k}|\tau|) \quad \text{ for } k \geq 1, \\ \chi_{0}(x, y, \tau) &:= \chi(x, y) \eta_0(|\tau|),
\end{align}  and, after changing variables in $\tau$ 
\Be\label{Rkdefinition}
\mathcal R_kf(x) := 2^{2k}\iint e^{i 2^{k}\langle \tau,  S(x, y_3)-y' \rangle} \chi_k(x,y,2^k\tau)d\tau\, f(y) dy ,
\Ee   
 \end{subequations} 
with  $\langle \tau,  S(x, y_3)-y' \rangle=\sum_{i=1}^2\tau_i(S^i(x,y_3)-y_i)$ and now $|\tau|\approx 1$. We then have  $$\mathcal Rf = \sum_{k \geq 0} \mathcal R_k f$$
for all Schwartz functions $f$. For $0 \leq \ell \le \lfloor k/3\rfloor
$, let \[
\chi_{k,\ell}(x, y, \tau) := \begin{cases}\chi_k(x,y,2^k\tau) \eta_1 \bigl(2^{\ell}(\tau_1 \Delta_1 + \tau_2 \Delta_2) \bigr), \text{ if } \ell < \lfloor k/3\rfloor, 
\\
\chi_k(x, y, 2^k\tau) \eta_0 \bigl(2^{\lfloor\frac{k}{3}\rfloor } (\tau_1 \Delta_1 + \tau_2 \Delta_2) \bigr), \text{ if } \ell = \lfloor k/3\rfloor, \end{cases} \] and
\begin{subequations}\label{Rkldef}
\begin{align}
R_{k, \ell}(x, y) &:=2^{2k}  \int e^{i 2^k\langle \tau, y' - S(x, y_3) \rangle} \chi_{k, \ell}(x, y, 2^k\tau) \, d\tau,  \\
\mathcal R_{k, \ell}f(x) &:= \int  R_{k, \ell}(x, y) f(y) \, dy.
%\text{ so that } 
\end{align}
\end{subequations}
so that $\mathcal R_{k} = \sum_{\ell \leq \frac{k}{3}} \mathcal R_{k, \ell}$. 
For $k>0$ the $\tau$-integration is extended over a subset of the annulus $\{1/2<|\tau|<2\}$ (indeed the intersection of this annulus with a $C2^{-\ell}$-neighborhood of a line $l(x,y_3)$).
 The quantity  $\tau_1\Delta_1+\tau_2\Delta_2$, when $|\tau|\approx 1$  is comparable to  the  distance to the fold surface $\cL$.

The by now standard $L^2$ estimate for the 
operators $\cR_{k,\ell}$ is
\Be\label{phstest}
\|\cR_{k,\ell}\|_{L^2\to L^2}  \lc 2^{\frac{\ell-k}2}
\Ee
for $\ell=0,1,\dots \lfloor k/3\rfloor$, see  \cite{cuccagna}.
The following estimates 
will be  the main ingredient for the proof of Theorem \ref{mainthm}.

\begin{thm}  \label{Rklthm} Let $0<\epsilon<1/6$. 
For $\ell \leq \lfloor k/3\rfloor$ 
we have
\begin{equation}\label{Rlkthmest} \|\mathcal R_{k, \ell}\|_{L^p\to L^p}  \leq C_{\epsilon,p} \cdot \begin{cases}
2^{\ell(\epsilon +\frac 2p-\frac 12)} 2^{- \frac{k}{p}},  \quad &4<p\le 6,
\\
2^{-\ell(\frac {1-6\epsilon}p)} 2^{- \frac{k}{p}}, \quad &6\le p\le\infty.
\end{cases}\end{equation}
\end{thm}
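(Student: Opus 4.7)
\medskip

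\noindent\textbf{Proof plan.} The plan is to further decompose $\cR_{k,\ell}$ into angular pieces adapted to plates on the (curved) cone $\Sigma_x$, bound each piece individually, and then reassemble using Wolff/Bourgain--Demeter cone decoupling at scale $2^\ell$. The reason $2^\ell$ and not $2^k$ is the relevant scale is that the cutoff $\eta_1(2^\ell(\tau_1\Delta_1+\tau_2\Delta_2))$ confines the frequency support of $\cR_{k,\ell}f$ to a $2^{k-\ell}$-neighborhood of the cone $2^k\Sigma_x$ in the conormal direction, which after rescaling by $2^k$ is exactly a $2^{-\ell}$-neighborhood of $\Sigma_x$; by Lemma \ref{curvcalc} this cone has one nonvanishing principal curvature, so the Bourgain--Demeter/Wolff cone decoupling is applicable once the variable coefficient aspects are handled.

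First I would introduce a finer decomposition $\cR_{k,\ell}=\sum_{j}\cR_{k,\ell}^{j}$ by partitioning the fold parameter (most naturally the $y_3$-variable, via the implicit function $\fy_3(x,\tau)$ solving $\tau_1\Delta_1+\tau_2\Delta_2=0$) into intervals of length $2^{(\ell-k)/2}$, giving roughly $2^{(k-\ell)/2}$ pieces. The point of this scale is that, once $y_3$ varies only in an interval of length $2^{(\ell-k)/2}$, the Taylor expansion of $\Xi(x,y_3)$ from \eqref{Xi} is linear up to errors of size $2^{-\ell}$, so that the frequency support of $\cR_{k,\ell}^{j}f$ (for $x$ frozen) is essentially a plate of dimensions $2^k\times 2^{k-\ell/2}\times 2^{k-\ell}$ --- the canonical plates for the cone $\Sigma_x$ at scale $2^\ell$.

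Second, for each piece I would establish a \emph{single-plate} bound derived from the kernel of $\cR_{k,\ell}^j$, whose essential support is a tube of size $1\times 2^{-k+\ell/2}\times 2^{-k+\ell}$; combining a direct $L^\infty\to L^\infty$ estimate (the kernel has mass $O(1)$) with the $L^2\to L^2$ bound from \eqref{phstest} (which gains a factor $2^{(\ell-k)/4}$ over the full operator, by orthogonality of the disjoint $y_3$-supports) yields the desired endpoint $L^p$-estimate for a single plate. Third, I would invoke decoupling: for $p\geq 6$ the Bourgain--Demeter $\ell^2$-decoupling for the cone in $\bbR^3$ gives
\[
\Big\|\sum_{j}\cR_{k,\ell}^{j}f\Big\|_p\,\le\,C_\eps 2^{\eps\ell}\Big(\sum_{j}\|\cR_{k,\ell}^{j}f\|_p^2\Big)^{1/2},
\]
and for $4<p\leq 6$ the Wolff $\ell^p$-decoupling gives the analogous estimate with the $\ell^2$-sum replaced by $\ell^p$. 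Plugging in the single-plate bounds, using $|J|\sim 2^{(k-\ell)/2}$, and interpolating (for $p\geq 6$) with the orthogonality bound \eqref{phstest} in the form $\sum_j\|\cR_{k,\ell}^j f\|_2^2\lesssim 2^{\ell-k}\|f\|_2^2$ produces the two exponents displayed in \eqref{Rlkthmest}.

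The main obstacle is the variable-coefficient aspect: the cone $\Sigma_x$ genuinely depends on $x$, so $\cR_{k,\ell}^jf$ does \emph{not} literally have Fourier support on a fixed cone plate and one cannot apply the cone decoupling inequality as a black box. The standard remedy, which I would follow, is to introduce an additional localization in $x$ at the smaller scale $2^{-\ell/2}$ (or $2^{(\ell-k)/2}$) via a partition of unity; on each such subcube one freezes the coefficients so that $\Sigma_x$ is replaced by a fixed cone up to an error which fits inside a slightly fattened plate, applies decoupling on that fixed cone, and then reassembles the $x$-localized contributions either by Minkowski's inequality (for $p\ge 2$) or by an almost-orthogonality argument in $x$. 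Verifying that the error terms produced by the freezing procedure and the partitions of unity contribute only at the level of $\eps$-losses --- and in particular interact correctly with the $2^{\eps\ell}$ decoupling constant --- is where the bulk of the technical work lies.
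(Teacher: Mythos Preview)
Your overall plan---angular decomposition, single-plate bounds, then cone decoupling---is the right architecture, and you correctly flag the variable-coefficient issue as the main obstacle. But the proposal has a genuine gap in two related places. First, the scale: $2^{(\ell-k)/2}$ does not produce the plate dimensions you claim (your own Taylor argument gives $y_3$-intervals of length $2^{-\ell/2}$), and even $2^{-\ell/2}$ is too coarse. Carrying your interpolation through at that scale yields $\|\cR_{k,\ell}\|_{p\to p}\lesssim 2^{\ell(3/(2p)-1/4+\epsilon)}2^{-k/p}$, whose $\ell$-exponent is positive for $4<p<6$ and only zero at $p=6$---not summable. The paper instead decouples down to $y_3$-intervals of length $2^{-\ell}$; this finer localization is possible because the cutoff $|\tau_1\Delta_1+\tau_2\Delta_2|\lesssim 2^{-\ell}$ together with an $x$-localization at scale $2^{-\ell}$ forces the frequency into a thinner $1\times 2^{-\ell}\times 2^{-2\ell}$ plate (Lemma \ref{platelocalizationlemma}). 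That produces the single-piece bound $2^{\ell(3/p-1)}2^{-k/p}$ which, combined with a decoupling constant $2^{\ell(1/2-1/p+\epsilon)}$, gives the claimed exponent $2/p-1/2+\epsilon$.

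Second, and more fundamentally, even at the correct scale a freeze-and-decouple-once strategy fails. With $x$ localized to a cube of side $2^{-\ell}$, the mixed terms in the phase (of the type $\fS^2_{w_jz_3}w_jz_3$ in the paper's normal form) contribute $O(2^{-\ell}\delta_0)$ to the normal component, while the cubic terms in $y_3$ contribute $O(\delta_0^3)$; for these to sit inside a plate of normal width $\delta_1^2$ one needs $\delta_1>\max\{(2^{-\ell}\delta_0)^{1/2},\delta_0^{3/2}\}$, which forbids going from $\delta_0\sim 1$ to $\delta_1\sim 2^{-\ell}$ in one step. The paper's remedy is an induction on scales (Proposition \ref{decouplingstepprop} and the proof of Theorem \ref{decouplingthm}): at each step a change of variables (\S\ref{changesofvarsect}) puts the phase in a normal form with $\fS^2_{wz_3}(0,0)=0$ at the new base point, Bourgain--Demeter is applied once subject to the constraint above, and this is iterated $O(\log\ell)$ times. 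That change-of-variables machinery is the key idea missing from your proposal; without it the ``error terms produced by the freezing procedure'' you defer are not $\epsilon$-losses but are in fact fatal.
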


The endpoint Sobolev bound will follow from this theorem  with some 
additional arguments, see  \S\ref{endpoint-bound}.

The  main important tool in the proof is the  following decoupling inequality.

\begin{thm} \label{decouplingthm}
Let $\ell\le k/3$ and let $\epsilon>0$.
Let, for $\nu\in \bbZ$ 
\Be\label{fnu}  f_\nu(y)= f(y) \bbone_{[2^{-\ell}\nu, 2^{-\ell}(\nu+1)]}(y_3).\Ee
Then for $2\le p\le 6$,
\Be\label{decouplingest}
\Big\| \sum_\nu \cR_{k,\ell} f_\nu
\Big\|_p 
\le C_\epsilon 2^{\ell (\epsilon+\frac 12-\frac 1p)}
\Big(\sum_\nu \big\|\cR_{k,\ell} f_\nu \big\|_p^p\Big)^{1/p} + C_{\epsilon} 2^{-k} \|f\|_p.
\Ee
\end{thm}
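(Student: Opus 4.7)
The plan is to derive the decoupling inequality \eqref{decouplingest} from the Bourgain--Demeter $\ell^2$-decoupling for cones in $\mathbb R^3$ applied to $\Sigma_{x^*}$ (which, by Lemma \ref{curvcalc}, carries one nonvanishing principal curvature everywhere on its support), combined with interpolation between the endpoints $p=2$ and $p=6$. First I would establish the frequency localization of each $\mathcal R_{k,\ell}f_\nu$. The oscillatory kernel in \eqref{Rkldef} has $x$-phase $-2^k\inn{\tau}{S(x,y_3)}$ whose $x$-gradient $-2^k(\tau_1 S^1_x+\tau_2 S^2_x)$ parametrizes, as $\tau$ runs through the $2^{-\ell}$-neighborhood of the fold locus $\{\tau_1\Delta_1+\tau_2\Delta_2=0\}$ and $y_3$ ranges over $I_\nu$, a plate $\mathcal P_\nu$ attached to the rescaled cone $2^k\Sigma_{x^*}$ of approximate dimensions $2^k\times 2^{k-\ell}\times 2^{k-\ell}$ (radial $\times$ tangential along $\Xi_{y_3}$ $\times$ normal to the cone). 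Schwartz tails coming from the smoothness of $\chi_{k,\ell}$ and from the suppressed $y'$-dependence yield the error $C_\epsilon 2^{-k}\|f\|_p$ through standard integration by parts.

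I would then treat the two endpoints $p=2$ and $p=6$ separately. The case $p=2$ is immediate from the bounded overlap of the plates $\mathcal P_\nu$: Plancherel gives $\|\sum_\nu\mathcal R_{k,\ell}f_\nu\|_2^2\lc \sum_\nu\|\mathcal R_{k,\ell}f_\nu\|_2^2$, which is \eqref{decouplingest} with essentially no $\ell$-loss at $p=2$. For $p=6$, I would apply the sharp Bourgain--Demeter cone decoupling to the $2^{-\ell}$-neighborhood of $\Sigma_{x^*}$. Because the fine plates $\mathcal P_\nu$ have angular extent $2^{-\ell}$---finer than the natural Wolff angular scale $2^{-\ell/2}$---I would first group the fine pieces into Wolff-scale blocks and apply BD's $\ell^2$-decoupling at that scale with loss $2^{\epsilon\ell}$, and then inside each Wolff block (containing $\sim 2^{\ell/2}$ nearly parallel fine strips) invoke Plancherel orthogonality combined with complex interpolation against the trivial triangle inequality to convert the $\ell^2$-bound to an $\ell^{p'}$-bound, followed by a H\"older conversion $\ell^2\to\ell^6$. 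Keeping track of all factors yields a loss of $2^{\ell(1/3+\epsilon)}$ at $p=6$. Riesz--Thorin interpolation applied to the decoupling operator $(g_\nu)_\nu\mapsto\sum_\nu g_\nu$ between the endpoints then produces the stated loss $2^{\ell(\epsilon+1/2-1/p)}$ throughout the range $2\le p\le 6$.

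The principal obstacle is establishing the sharp $p=6$ bound with loss exactly $2^{\ell/3+\epsilon\ell}$: the $y_3$-decomposition is \emph{finer} than the natural Wolff scale for cone decoupling, so a direct application of BD at Wolff scale combined with the trivial triangle inequality inside each Wolff block overshoots. One must exploit approximate orthogonality of the fine subpieces within each Wolff block (and possibly the small frequency volume of each $\mathcal P_\nu$, via Bernstein-type inequalities) to avoid incurring additional powers of $2^\ell$. A subsidiary technical issue is the dependence of the cone $\Sigma_x$ on the base point $x$; since the amplitude $\chi_{k,\ell}$ has small spatial support, this is handled by freezing $x$ at a reference point $x^*$, with the resulting perturbation errors absorbed into the Schwartz-tail term.
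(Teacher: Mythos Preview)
Your proposal has a genuine gap, and it is exactly the issue you flag as a ``subsidiary technical issue'' but then dismiss too quickly.

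The plate you assign to $\mathcal R_{k,\ell}f_\nu$ has, as you say, normal thickness $2^{k-\ell}$, not $2^{k-2\ell}$. This is unavoidable: even after localizing $x$ to a cube of side $2^{-\ell}$, the cone $\Sigma_x$ still varies by $O(2^{-\ell})$ in the normal direction, and the cubic Taylor errors in $y_3$ are of the same order. With normal thickness $2^{-\ell}$ at unit scale, the Bourgain--Demeter theorem only lets you decouple into angular sectors of width $2^{-\ell/2}$ (your ``Wolff scale''), and this part of your plan is fine. The problem is the next step: the $\sim 2^{\ell/2}$ fine pieces inside a single Wolff block are \emph{not} Plancherel-orthogonal, because they all share the same $2^{-\ell}$-thick normal slab. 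So the ``Plancherel orthogonality combined with complex interpolation'' you invoke inside each block has no input to work with, and you are left with the triangle inequality, losing an extra $2^{\ell/2(1-1/p)}$. The resulting exponent is strictly worse than $\frac12-\frac1p$ and does not give Theorem~\ref{Rklthm} in the range $4<p\le 6$.

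The paper's proof repairs this by \emph{not} freezing the cone once and for all. Instead it runs an induction on scales $2^{-m_0}>2^{-m_1}>\cdots>2^{-m_{N(\ell)}}$ with $m_0\approx\ell\eps^2$, $m_{N(\ell)}\approx\ell(1-\eps^2)$, subject to the constraint $2^{-m_{j+1}}\gtrsim\max\{(2^{-\ell}2^{-m_j})^{1/2},\,2^{-3m_j/2}\}$. At each step one performs a change of variables adapted to the current spatial cube \emph{and} the current $y_3$-interval (Section~\ref{changesofvarsect}), which puts the phase in the normal form \eqref{Sinormal}. In that normal form the plate-localization Lemma~\ref{platelocalizationlemma} shows that the normal thickness really is $\delta_1^2$, so Bourgain--Demeter applies with the correct loss $(\delta_0/\delta_1)^{\eps+1/2-1/p}$ at that step (Propositions~\ref{decouplingstepmodelprop} and~\ref{decouplingstepprop}). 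The scale constraint is precisely what makes the error terms \eqref{errorsest} in the normal direction of order $\ll\delta_1^2$; it forces $N(\ell)\sim\log\ell$ iterations, but the losses telescope to $2^{\ell(\eps+1/2-1/p)}$. Your one-shot freezing corresponds to taking $\delta_0\approx1$, $\delta_1=2^{-\ell}$, which violates the constraint and is exactly where the normal-direction error blows up.
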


Theorem \ref{decouplingthm} will be proved by induction, see \S\ref{decouplingstepsect}. In each induction step we will combine a standard application of the Wolff-Bourgain-Demeter decoupling theorem  in combinations with suitable changes of variables.
\
\begin{proof}[Proof that Theorem \ref{decouplingthm} implies Theorem \ref{Rklthm}]

We first note that  for $g_\nu\in L^\infty$ and with
$\bbone_{\nu,\ell}(y_3) :=\bbone_{[2^{-\ell}\nu, 2^{-\ell}(\nu+1)]}(y_3)$
\Be\label{infty} \sup_\nu \|\cR_{k,\ell} [\bbone_{\nu,\ell} g_\nu] \big\|_\infty \lc 2^{-\ell} \sup_\nu  \|g_\nu\|_\infty.
\Ee
To see this one derives an estimate for the Schwartz kernel $R_{k,\ell}(x,y)$ by integrating by parts, distinguishing the directions $(\Delta_1,\Delta_2)$ and $(-\Delta_2,\Delta_1)$.
This shows that $|R_{k,\ell}(x,y)| \le C_N \prod_{i=1}^2
U_{k,\ell,i}(x,y)$ where
\begin{align*}U_{k,\ell,1}(x,y)&=
 \frac{2^{k-\ell} }
{(1+2^{k-\ell} |\Delta_1(y_1-S^1)+\Delta_2(y_2-S^2)|)^N}
\\
U_{k,\ell,2}(x,y)&=
\frac{2^k}
{(1+2^{k} |-\Delta_2(y_1-S^1)+\Delta_1(y_2-S^2)|)^N}
\end{align*}
where $S^1, S^2,\Delta_1, \Delta_2$ are evaluated at $(x,y_3)$. We integrate in $(y_1,y_2)$ first and then use that the $y_3$ integration is extended over an interval of length $2^{-\ell}$. This yields \eqref{infty}.

From \eqref{phstest} and averaging with Rademacher functions we also get 
\Be\label{two} \Big(\sum_\nu \big\|\cR_{k,\ell} [\bbone_{\nu,\ell} g_\nu] \big\|_2^2\Big)^{1/2} \lc 2^{\frac{\ell-k}2} \Big(\sum_\nu  \|g_\nu\|_2^2\Big)^{1/2}, \notag
\Ee
and by interpolation, 
\Be\Big(\sum_\nu \big\|\cR_{k,\ell} [\bbone_{\ell,\nu}g_\nu] \big\|^p\Big)^{\frac 1p} 
\lc 2^{\ell (\frac 3p-1)} 2^{-\frac kp}\Big(\sum_\nu  \|g_\nu\|_p^p\Big)^{\frac 1p}, \quad 2\le p\le \infty. \label{ellpLpbds}
\Ee 
Combining this with \eqref{decouplingest}
we obtain 
\Be
\big\|  \cR_{k,\ell} f
\big\|_p 
\lc C_\eps 2^{\ell (\epsilon+ \frac 2p-\frac 12)} 2^{-\frac kp}
\|f\|_p, \quad\text{  $2\le p\le 6$.}
\notag
\Ee
Finally from \eqref{infty} we also have the  bound $\|\cR_{k,\ell}\|_{L^\infty\to L^\infty} =O(1)$ and a further interpolation gives the inequality asserted in \eqref{Rlkthmest} for $6\le p\le \infty$.
\end{proof} 
 
\subsection*{\it An estimate in Besov-spaces}
Theorem \ref{Rklthm} implies an estimate in Besov spaces. To see that
we let $L_k$ be the operator defined by $\widehat {L_k f}=\beta(2^{-k} \xi)\widehat f$ 
where $\beta\in C^\infty_c (\bbR^3\setminus \{0\})$. Integration by parts arguments show that there exists a constant $C$ such that 
%\begin{\multline} 
\Be\label{Littlewood-Paley-calc}
\|L_{k'} \cR_{k,\ell} L_{k''}\|_{L^p\to L^p} \le C_N 2^{-N \max\{k,k',k"\}} \text{ if } 
\max\{|k-k'|, |k-k''|\}>C
\Ee
whenever $\min\{k,k', k''\}\ge 3\ell$.
%end{multline} 
This, together with the main estimate
\Be \|\cR_k\|_{L^p\to L^p} \le C(p)  2^{-k/p}\text{ for } p>4.\Ee
implies the boundedness result 
\[ \cR:\, (B^s_{p,q})_{\text{comp}}  \to (B^{s+1/p}_{p,q})_{\text{loc}}, \text{ for } p>4.\]
For the more sophisticated Sobolev bounds, and improvements, see \S\ref{endpoint-bound}.

\section{The decoupling step in a model case}\label{decouplingstepmodelsect}

In this section we consider a model version of the operator $\cR_{k,\ell}$ defined in \eqref{Rkldef},
where the functions $S^i$ are replaced by $\fS^i$ 
satisfying additional assumptions at the origin, see 
\eqref{Sinormal} below. These normalizing assumptions will enable us to carry out a decoupling step as suggested by the Bourgain-Demeter decoupling theorem which we  review in \S\ref{BDsect}.
The reduction of the general case to the model case will be carried out later in \S \ref{decouplingstepsect}, using suitable changes of variables discussed in \S\ref{changesofvarsect}.

\subsection{\it The Bourgain-Demeter decoupling theorem}\label{BDsect}
Let $\kappa_0\neq 0$ be a constant.  We use the decoupling result in \cite{bourgain-demeter}, for the part of the cone  
\[ \Sigma=\{\xi: \ka_0  \xi_2\xi_3+\tfrac 12\xi_1^2 =0\}\] where $|\xi_2| \approx 1$, $|\xi_1|\ll 1$. A parametrization  is given by
$$ \xi(b,\la)= \lambda (-\ka_0 b e_1+e_2-\tfrac 12 \ka_0 b^2e_3)
$$  
where $|\la|\approx 1$, $|b|\ll |b|M_0\ll 1$.
Let \Be
T_1(b)= \frac \partial{\partial \la} \xi(b,\la)=
-\ka_0 be_1+e_2-\tfrac 12\ka_0 b^2 e_3, \label{T1def}\Ee
 be the tangent vector pointing towards the origin and let 
\Be \widetilde T_2(b)= -\ka_0^{-1} \la^{-1} \frac \partial{\partial b} \xi(b,\la)
=
e_1+be_3.\label{T2tildedef}
\Ee
Then $T_1(b)$ and $\widetilde T_2(b)$ form a basis of the tangent space of $\Sigma$  at $\la\xi(b)$.
A normal vector is given by 
\Be
N(b)= T_1(b)\wedge \widetilde T_2(b)=
be_1+\tfrac 12\ka_0  b^2e_2- e_3.\label{Ndef}
\Ee
For the definition of our plate we need to replace $\widetilde T_2(b)$ by a vector in the span of $T_1(b)$ and $\widetilde T_2(b)$ that is perpendicular to $T_1(b)$.
Such a vector is given by 
\begin{align}
T_2(b)&= (1-\frac 14\ka_0^2 b^4) e_1+ \ka_0 b(1+\tfrac 12 b^2) e_2+ (b+\tfrac 12 \ka_0^2 b^3)e_3\label{T2def}
\\
&=e_1 +\ka_0 be_2+ be_3 +  O(b^3) \notag
\end{align}

Let $A>1$. For $\delta\ll 1$  
let  
\begin{multline}\label{platedef}
\varPi_{A,b}(\delta)\,=\,\\
\big\{\xi\in \bbR^3:\,\,
A^{-1}\le |\inn {\tfrac{T_1(b)}{|T_1(b)|}}{\xi} |\le A,\quad 
|\inn {\tfrac{T_2(b)}{|T_2(b)|}}{\xi} |\le A\delta,\quad 
|\inn {\tfrac{N(b)}{|N(b)|}}{\xi} |\le A\delta^2\big\}.
\end{multline}

One refers to the sets $\varPi_{A,b} (\delta)$ as plates; they are  unions of $A(1,\delta,\delta^2)$-boxes 
with the long, middle, short side parallel to $T_1(b)$, $T_2(b)$, $N(b)$, respectively.

\begin{thmunnumbered}[\cite{bourgain-demeter}] Let $\epsilon>0$, $A>1$. There exists a constant $C(\epsilon,A)$ such that the following holds for  $0<\delta_1<\delta_0<1$.

Let $B=\{b_\nu\}_{\nu=1}^M$ be a set of points  in $[-1,1]$ such that $|b_\nu-b_{\nu'}|\ge \delta_1$ for $b_\nu, b_{\nu'}\in B$, $\nu\neq \nu'$, and $B$ is contained in an interval of length $\delta_0$.
Let $2\le p\le 6$. Let $f_\nu\in L^p(\bbR^3)$ such that the Fourier transform of $f_\nu$ is supported in $\varPi_{A,b_\nu}(\delta_1)$. Then
\Be\label{BD} 
\Big\|\sum_\nu f_\nu \Big\|_p \le C(\epsilon, A)(\delta_0/\delta_1)^{\epsilon} \Big(\sum_\nu\|f_\nu\|_p^2\Big)^{1/2}.
\Ee
\end{thmunnumbered}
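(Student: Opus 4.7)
The plan is to follow the strategy of Bourgain and Demeter, since this statement is essentially their $\ell^2$ decoupling theorem for the truncated light cone in $\bbR^3$. I would set up a running constant $\mathrm{Dec}_p(\delta_0,\delta_1)$ equal to the best constant in \eqref{BD} over all admissible $B$ and $f_\nu$, and argue by induction on scales, aiming to bound $\mathrm{Dec}_p(\delta_0,\delta_1)$ by $(\delta_0/\delta_1)^\epsilon$ for any $\epsilon>0$ in the range $2\le p\le 6$. The proof produces $\epsilon$ from a geometric decay in the induction step times an arithmetic loss that can be absorbed into $\epsilon$.

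Next I would perform a parabolic rescaling to reduce to the case $\delta_0=1$. The parametrization $\xi(b,\lambda)=\lambda(-\kappa_0 be_1+e_2-\tfrac12\kappa_0 b^2e_3)$ shows that the part of the cone corresponding to $b\in[b_0-\delta_0,b_0+\delta_0]$ is the affine image, under an anisotropic Lorentz-type map, of the full piece $b\in[-1,1]$; this map sends the plate $\varPi_{A,b_0}(\delta_0)$ to a unit plate and sends each sub-plate $\varPi_{A,b_\nu}(\delta_1)$ to a plate of the rescaled cone at width $\delta_1/\delta_0$. After this rescaling it suffices to prove \eqref{BD} when the caps fill $[-1,1]$ and $\delta_0=1$, so the target inequality becomes $\mathrm{Dec}_p(1,\delta)\lesssim_\eps \delta^{-\eps}$.

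Finally I would carry out the bilinear-to-linear (or trilinear-to-linear) reduction of Bourgain-Guth: partition $[-1,1]$ into intermediate caps of size $\delta^{1/2}$, declare a triple of caps transverse when the corresponding angles $b$ are pairwise separated by $\gtrsim \delta^{1/2}$, and split the $L^p$ norm on the left of \eqref{BD} into a transverse part plus a non-transverse (broad/narrow) remainder. For three transverse caps the normals $N(b)$ in \eqref{Ndef} span $\bbR^3$, so the associated $1\times\delta^{1/2}\times\delta$ slabs satisfy a Loomis-Whitney/Bennett-Carbery-Tao multilinear Kakeya estimate; this controls the transverse piece in $L^p$ for $p\le 6$. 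The non-transverse piece is dominated by $L^p$ norms over single intermediate caps, to which parabolic rescaling and the induction hypothesis apply at scale $\delta/\delta^{1/2}=\delta^{1/2}$. The main obstacle, and the source of the sharp restriction $p\le 6$, is the sharp exponent in the multilinear Kakeya estimate for lines in $\bbR^3$: one needs $L^{p/3}$-integrability of the product of three transverse tube indicators, which is borderline at $p=6$ and fails for larger $p$. Balancing the geometric gains at the transverse step against the bookkeeping loss from the induction closes the argument for $2\le p\le 6$.
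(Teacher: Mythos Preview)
The paper does not contain a proof of this statement. The theorem is quoted from \cite{bourgain-demeter} and used as a black box; immediately after the statement the authors move on to the $\ell^p$ variant \eqref{Wolffineq} and then to applying the inequality. So there is nothing in the paper to compare your proposal against.

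Your outline is a reasonable high-level sketch of the Bourgain--Demeter argument (parabolic rescaling, induction on scales, Bourgain--Guth broad/narrow decomposition, multilinear Kakeya), but since the paper treats the result as an external input, no proof is expected here.
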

%\begin{remark}
One also has
\Be\label{Wolffineq} 
\Big\|\sum_\nu f_\nu \Big\|_p \le C(\epsilon, A) 
\begin{cases} (\delta_0/\delta)^{\epsilon+1/2-1/p}
\Big(\sum_\nu\|f_\nu\|_p^p\Big)^{1/p}, &p\le 6,
\\
(\delta_0/\delta)^{\epsilon+1-4/p}
\Big(\sum_\nu\|f_\nu\|_p^p\Big)^{1/p}, & 6\le p \le\infty.
\end{cases}
\Ee
This  is the $\ell^p$-decoupling result that was first proved for large $p$ by Wolff \cite{Wolff1}. %Inequality 
\eqref{Wolffineq} follows from \eqref{BD}  by H\"older's inequality and interpolation arguments.
Our  proof of Theorem \ref{mainthm} will be  based on \eqref{BD} but could also be based on the case $p>6$ of 
\eqref{Wolffineq}, as it was in (\cite{pr-se}), in the case of convolution operators.   A variant of this argument was also given  in the  manuscript \cite{pr-se-unpublished} on the variable case, an unpublished  precursor to  the current paper, with only a preliminary result.
%\end{remark}

\subsection{\it The model case} \label{modelcasesect} 
For $i=1,2$ consider $C^\infty$ functions $(w,z_3)\mapsto \fS^i(w,z_3)$ defined on a neighborhood $U$ of $[-r,r]^4$, for some $r\in (0,1)$. Assume that $M_0$  satisfies
\Be \label{M0eps0}M_0\ge 2+\|\fS^1\|_{C^5([-r,r]^4)}
+\|\fS^2\|_{C^5([-r,r]^4)}, 
\Ee where the $C^5$ norm is the maximum of the supremum of all derivatives of order $0,\dots,5$.
We assume that for $w\in [-r,r]^3$
\begin{subequations}\label{Sinormal}
\Be\label{Sinormalabc}
   (\fS^1, \fS^2, \fS^1_{z_3})\big|_{(w,0)}= (w_1,w_2, w_3);
   \Ee
moreover
\Be
\label{Sinormal-d}
    \fS^2_{wz_3}(0,0)= 0, 
\Ee
and
\Be\label{Sinormale}
 \fS^2_{w_3z_3z_3}(0,0)= \kappa_0.
   \Ee   
\end{subequations}  

Let in \eqref{defofchik} the function $\chi_0$ be  supported in a neighborhood $V$ of $(0,0)\in \bbR^3\times\bbR^3$ which is of diameter $\le 10^{-10} M_0\ll r$ and 
let $(w,z)\mapsto \alpha(w,z)$ be a $C^\infty$ function satisfying 
\Be\label{alphaequivalence}M_0^{-1} \le |\alpha (w,z)|\le M_0\Ee 
and with the higher derivatives of $\alpha$ depending on $M_0$ and the order of differentiation.

Let $(w,z_3,\mu)\mapsto \zeta(w,z_3,\mu)$ belong to a bounded family of $C^\infty$ functions supported where $-r\le w_i, z_3\le r$ and $1/4\le|\mu|\le 4$.
Let $\eta$ be $C^\infty$ and supported in $(-2,2)$ and let $\cT_{k,\ell}$ be the operator with Schwartz kernel
\begin{multline}\label{tildeRkl} 
\cT_{k, \ell}(w,z) :=2^{2k} 
 \int_{\bbR^2} e^{i 2^k\langle \mu,  \fS(w, z_3) -z'\rangle} 
\times \\\eta \bigl(2^{\ell}\alpha(w,z)(\mu_1 \Delta_1^\fS(w,z_3) + \mu_2 \Delta_2^{\fS}(w,z_3)) \bigr)
\zeta(w, z,\mu) \, d\mu\,.  \end{multline}
Here   
 $\Delta_i^\fS(w,z_3)=\det(\fS^1_w, \fS^2_w, \fS^i_{wz_3})$.
We shall omit the superscript 
and   assume throughout this
subsection \S\ref{modelcasesect} 
that $\Delta_i\equiv\Delta_i^\fS$.
The operator $\cT^{k,\ell}$ is  a version of $\cR^{k,\ell}$ defined before 
under the additional assumptions in \eqref{Sinormal}. 
We need to include the function $\alpha$ in the localization to provide added flexibility in the later stages of the proof of Theorem \ref{decouplingthm} when we apply repeated changes of variables
(\cf. formula \eqref{Deltarelation} below).

The basic decoupling step is summarized in
 \begin{proposition}	
  \label{decouplingstepmodelprop}
  Let $0<\epsilon<1/2$. There is a constant $C_\epsilon$ so that the following holds.
  
  Let  $\ell\le \floor{k/3}$ and let 
 \begin{subequations} \label{delta0delta1assu}\Be\label{delta0,1}\delta_0, \delta_1 \in (M_0^2 2^{20-\ell (1-\epsilon^2)}, 2^{-\ell\epsilon^2-20} M_0^{-2})\Ee  such that \Be \label{deccond}
 2^{100} M_0\max \{ (2^{-\ell} \delta_0)^{1/2}, \delta_0^{3/2} \}< \delta_1<\delta_0.\Ee
 \end{subequations}
  Let 
 $\cI_J$ be a collection of intervals of length $\delta_1$ which have disjoint interior and which are contained in $[0,\delta_0]$.
 Let $a\in \bbR^3$, $\vsig\in C^\infty_c$ supported in $(-1,1)^3$ and $\vsig_{\ell,0}(w)=\vsig (2^{\ell}w).$
 Then
for $2\le p\le 6$, for $g\in L^p(\bbR^3)$ and 
  $g_I(y):=g(y)\bbone_I(z_3)$ we have
\Be\label{decouplingestmodel}
\Big\| \vsig_{\ell,0} \sum_{I\in \cI_J} \cT_{k,\ell} g_I\Big\|_p 
\le C_\epsilon (\delta_0/\delta_1)^{\epsilon}
\Big(\sum_{I\in \cI_J} \big\|\vsig_{\ell,0}\cT_{k,\ell} g_I \big\|_p^2\Big)^{1/2} + C_{\epsilon} 2^{-10 k} \|g\|_p.
\Ee
\end{proposition}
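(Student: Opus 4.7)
The plan is to show that each piece $\varsigma_{\ell,0}\cT_{k,\ell} g_I$ has $w$-Fourier transform essentially concentrated, up to rapidly decaying tails, in a single Bourgain--Demeter plate $\varPi_{A,b_I}(\delta_1)$ (at scale $2^k$) tangent to the cone $\Sigma_0$ at the origin, with $b_I$ the center of $I$. Once this is done, the decoupling inequality \eqref{BD} applied at scale $\delta_1$ over an interval of length $\delta_0$ yields \eqref{decouplingestmodel} directly, with the constant $A=A(M_0)$ chosen to absorb the perturbations.

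To verify the plate localization I would first exploit the normalizations \eqref{Sinormal} to place the geometry in standard form at the origin. A direct computation of $\Delta_i=\det(\fS^1_w,\fS^2_w,\fS^i_{wz_3})$ at $(w,z_3)=(0,0)$ gives $\Delta_1(0,0)=1$ and $\Delta_2(0,0)=0$ (using \eqref{Sinormal-d}), so the fold cutoff $\eta(2^\ell\alpha(\mu_1\Delta_1+\mu_2\Delta_2))$ forces $\mu$ to lie in an $O(2^{-\ell})$-neighborhood of $\{\mu_1=0\}\cap\{|\mu|\approx 1\}$. Computing $\Xi(0,z_3)=(-\Delta_2\fS^1_w+\Delta_1\fS^2_w)|_{(0,z_3)}$ and its $z_3$-derivatives at $z_3=0$ using \eqref{Sinormalabc}--\eqref{Sinormale} one finds $\Xi(0,0)=e_2$, a first derivative $\Xi_{z_3}(0,0)$ with tangent component of size $\kappa_0$ along $e_1$, and a second derivative $\Xi_{z_3z_3}(0,0)$ with normal component of size $\kappa_0$ along $e_3$, consistent with Lemma \ref{curvcalc} which gives $\det(\Xi,\Xi_{z_3},\Xi_{z_3z_3})|_{(0,0)}=-\kappa_0^2\ne 0$.

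Next I would verify the plate dimensions. For $z_3\in I$ of length $\delta_1$ centered at $b_I\in[0,\delta_0]$, Taylor expansion of the cone parametrization $z_3\mapsto\mu_2\Xi(0,z_3)$ produces displacement of order $2^k\delta_1\kappa_0$ in the tangent direction $T_2(b_I)$ and of order $2^k\delta_1^2\kappa_0/2$ in the normal direction $N(b_I)$ in the $\xi$-coordinates. The fold slab contributes $O(2^{k-\ell})$ displacement in the tangent direction $e_1$ (via $\fS^1_w(0,0)=e_1$), which is absorbed into the tangent width $2^k\delta_1$ of the plate since the range condition \eqref{delta0,1} together with \eqref{deccond} forces $\delta_1\gg 2^{-\ell}$. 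The $w$-support of $\varsigma_{\ell,0}$ contributes perturbations of size $O(2^{-\ell}M_0)$ to $\Xi(w,z_3)$ (the $\fS^i$ being $C^5$-bounded by $M_0$) and a Fourier smearing of size $O(2^\ell)$ from the Fourier transform of $\varsigma_{\ell,0}$, both harmless because $\ell\le\lfloor k/3\rfloor$. The third-order Taylor remainder over $[0,\delta_0]$ contributes $O(\delta_0^3)$ to the normal component, controlled by $\delta_1^2$ via $\delta_1>2^{100}M_0\delta_0^{3/2}$ from \eqref{deccond}. Together these ensure that after rescaling $\xi=2^k\eta$, the Fourier transform of $\varsigma_{\ell,0}\cT_{k,\ell}g_I$ lies, modulo negligible tails, inside the standard plate $\varPi_{A,b_I}(\delta_1)$.

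Finally, an application of the Bourgain--Demeter theorem \eqref{BD} produces the bound \eqref{decouplingestmodel}, the tail term $C_\epsilon 2^{-10k}\|g\|_p$ capturing contributions outside the plate. These tails come from frequencies $\xi$ where the $w$-gradient of the phase $2^k\mu\cdot\fS(w,z_3)-\xi\cdot w$ is large; repeated integration by parts in $w$, exploiting the $2^{-\ell}$-scale support of $\varsigma_{\ell,0}$, yields $O(2^{-Nk})$ decay for any prescribed $N$. The principal obstacle is the careful bookkeeping in the second step: tracking all the perturbation scales (fold thickness $2^{-\ell}$, $w$-variation over $2^{-\ell}$, Taylor remainder $\delta_0^3$, Fourier smearing $2^\ell$) and verifying that the threshold conditions in \eqref{delta0delta1assu} make each of them harmless against the plate dimensions $(1,\delta_1,\delta_1^2)$ after rescaling.
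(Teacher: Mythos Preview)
Your strategy matches the paper's exactly: establish that $\varsigma_{\ell,0}\cT_{k,\ell}g_I$ is Fourier-localized to the plate $2^k\varPi_{A,b_I}(\delta_1)$ (the paper's Lemma~\ref{platelocalizationlemma}), apply \eqref{BD}, and handle the off-plate piece by integration by parts in $w$ with gain $2^k\delta_1^2 2^{-\ell}\ge 2^{k\epsilon^2/3}$ per step (the paper's \S\ref{proofofprop}). Your identification of the cone geometry at the origin, the role of $\delta_0^{3/2}<\delta_1$ for the cubic remainder, and the IBP mechanism for the error term are all correct.

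There is, however, one point where your bookkeeping would fail as written. You assert that the $w$-variation contributes perturbations of size $O(2^{-\ell}M_0)$ to $\Xi(w,z_3)$, harmless because $\ell\le k/3$. But $2^{-\ell}$ is \emph{not} dominated by the normal plate width $\delta_1^2$: since $\delta_1$ can be as small as $\sim 2^{-\ell(1-\epsilon^2)}$, one has $\delta_1^2\sim 2^{-2\ell(1-\epsilon^2)}\ll 2^{-\ell}$. A bare $O(2^{-\ell})$ perturbation in the normal direction would therefore destroy the plate. The paper resolves this by carrying out the full second-order Taylor expansion of $\Delta_1\fS^2_w-\Delta_2\fS^1_w$ in $(w,z_3)$ and computing that the linear $w$-coefficients $v_j$ vanish identically while the pure quadratic $w$-coefficients $v_{j,k}$ lie in $\mathrm{span}(e_1,e_2)$; hence the only $w$-contribution to the $e_3$-component comes from the mixed terms $v_{0,j}w_jz_3$, of size $O(M_02^{-\ell}\delta_0)$. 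It is precisely to control this term by $\delta_1^2$ that the condition $(2^{-\ell}\delta_0)^{1/2}<\delta_1$ appears in \eqref{deccond}, which you never invoke. A parallel refinement is needed for the fold term $(\mu_1\Delta_1+\mu_2\Delta_2)\fS^1_w$, which you dismiss as purely tangential: since $\fS^1_w\approx e_1+z_3e_3$ and $N(b)\approx be_1-e_3$, one has $\langle\fS^1_w,N(b)\rangle\approx b-z_3=O(\delta_1)$, giving a normal contribution $O(2^{-\ell}\delta_1)\ll\delta_1^2$ rather than $O(2^{-\ell})$.
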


In order to apply \eqref{BD} in this situation we need to consider the Fourier transforms of $\vsig_{\ell,0} \sum_{I\in \cI_J} \cT_{k,\ell} g_I$ and show that they are concentrated on the plates $2^k\varPi_{A,b_I} (\delta_1)$ for $b_I\in I$ and suitable $A>1$. We establish this plate localization in \S\ref{platelocalization} and conclude the proof of Proposition \ref{decouplingstepmodelprop} in \S\ref{proofofprop}.

\subsection{\it Derivatives of $\fS$ and $\Delta$}\label{fSderivatives}
We use this section to record some facts needed later  in \S\ref{platelocalization}, about various derivatives of $\fS^i(w,z_3)$ and $\Delta_i(w,z_3)$, under the assumption that
\Be\label{wz_3assu}|w|_\infty\le 2^{-\ell}\le\delta_0, \quad |z_3|\le \delta_0,\Ee
under  the specifications in \eqref{delta0delta1assu}.

\subsubsection{Taylor expansion of $\fS^1_w$ and $\fS^2_w$}
\begin{lemma}
Let $w$, $z_3$, $2^{-\ell}$, $\delta_0$ be as in \eqref{wz_3assu}. Then
\Be\label{fSwexpansions}
\begin{aligned}
\fS^1_w(w,z_3)&= e_1+z_3 e_3 + E^1(w,z_3)
\\
\fS^2_w(w,z_3)&= e_2+ \tfrac 12 \ka_0 z_3^2 e_3+ E^2(w,z_3)
\end{aligned}
\Ee
where
\Be\label{E1bound}
|\inn{e_i}{E^1(w,z_3)}|\le 8M_0\delta_0^2, \quad i=1,2,3,
\Ee
and\begin{subequations}
\begin{align}\label{E2bound12}
|\inn{e_i}{E^2(w,z_3)}|&\le 8M_0\delta_0^2, \quad i=1,2,
\\
\label{E2bound3}
|\inn{e_3}{E^2(w,z_3)}|&\le M_0(8 \delta_02^{-\ell}+2\delta_0^3),
\end{align}
\end{subequations}
\end{lemma}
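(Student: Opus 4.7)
The plan is to unwind the definitions: estimates \eqref{fSwexpansions}--\eqref{E2bound3} are pure Taylor expansions in the variable $z_3$ around $z_3=0$, whose zeroth and first order terms are pinned down by the normalizing conditions \eqref{Sinormal}, and whose remainders are controlled by the $C^5$-bound \eqref{M0eps0} together with the hypothesis \eqref{wz_3assu}.

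For $\fS^1_w$ the identities $\fS^1(w,0)=w_1$ and $\fS^1_{z_3}(w,0)=w_3$ from \eqref{Sinormalabc}, after differentiating in $w$, yield $\fS^1_w(w,0)=e_1$ and $\fS^1_{wz_3}(w,0)=e_3$ for all $w$ in the given range. Writing the second-order Taylor expansion of $\fS^1_w$ in $z_3$ with integral remainder then gives $\fS^1_w(w,z_3)=e_1+z_3e_3+E^1(w,z_3)$ with
\[
E^1(w,z_3)=\int_0^{z_3}(z_3-s)\,\fS^1_{wz_3z_3}(w,s)\,ds,
\]
whose components are bounded by $\tfrac12 M_0\delta_0^2$, which is comfortably within $8M_0\delta_0^2$.

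For $\fS^2_w$ the same differentiation of $\fS^2(w,0)=w_2$ gives $\fS^2_w(w,0)=e_2$. I will use a Taylor expansion in $z_3$ to second order with integral remainder for the $e_1,e_2$ components, and to third order for the $e_3$ component in order to capture the $\tfrac12\kappa_0 z_3^2e_3$ term. The key inputs are:
\begin{itemize}
\item from \eqref{Sinormal-d}, $\fS^2_{wz_3}(0,0)=0$, so along $z_3=0$ the mean value theorem in $w$ gives $|\fS^2_{wz_3}(w,0)|\le 3M_0|w|_\infty\le 3M_0\,2^{-\ell}$;
\item from \eqref{Sinormale}, $\fS^2_{w_3z_3z_3}(0,0)=\kappa_0$, so $\fS^2_{w_3z_3z_3}(w,0)=\kappa_0+O(M_0|w|_\infty)$, while $\fS^2_{w_jz_3z_3}(w,0)=O(M_0)$ for $j=1,2$;
\item the third-order $z_3$-derivative $\fS^2_{wz_3z_3z_3}$ is bounded by $M_0$.
\end{itemize}
Combining these for $j=1,2$ and using $2^{-\ell}\le\delta_0$, the error in the $e_j$ component is bounded by
\[
|z_3|\cdot 3M_0\,2^{-\ell}\;+\;\tfrac12 z_3^2\cdot M_0\;+\;\tfrac16|z_3|^3\cdot M_0\;\le\;8M_0\delta_0^2,
\]
giving \eqref{E2bound12}. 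For $j=3$ the $\tfrac12\kappa_0 z_3^2$ term is split off, leaving
\[
|E^2_3|\le|z_3|\cdot 3M_0\,2^{-\ell}+\tfrac12 z_3^2\cdot 3M_0\,2^{-\ell}+\tfrac16|z_3|^3 M_0\le 8M_0\delta_0\,2^{-\ell}+2M_0\delta_0^3,
\]
which is \eqref{E2bound3}.

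There is no real obstacle here — the one subtlety is to remember that \eqref{Sinormal} only pins down the $e_3$-component of $\fS^2_{wz_3z_3}(0,0)$, so the tangential and normal components of $\fS^2_w$ must be handled with different Taylor orders (two versus three), and this is precisely what accounts for the asymmetry between \eqref{E2bound12} and \eqref{E2bound3}.
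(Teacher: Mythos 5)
Your proof is correct and follows essentially the same route as the paper: a Taylor expansion with integral remainder anchored at the normalization conditions \eqref{Sinormal}, with the remainder controlled by the $C^5$-bound $M_0$ and the hypothesis $|w|_\infty\le 2^{-\ell}\le\delta_0$, $|z_3|\le\delta_0$. The only (harmless) organizational difference is that the paper expands jointly in $(w,z_3)$ about the origin and sorts the quadratic remainder into $ww$, $wz_3$ and $z_3z_3$ pieces, whereas you expand in $z_3$ alone for fixed $w$ — exploiting that \eqref{Sinormalabc} holds identically in $w$ along $\{z_3=0\}$ — and invoke the mean value theorem in $w$ only for $\fS^2_{wz_3}(\cdot,0)$ and $\fS^2_{w_3z_3z_3}(\cdot,0)$; both yield the stated constants.
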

\begin{proof}
We expand using conditions \eqref{Sinormalabc} 
and obtain 
\begin{align*} 
\fS^1_w(w,z_3)&= e_1+ z_3 e_3 + \widetilde E^1(w,z_3)
\\
\fS^2_w(w,z_3)&= e_2+  \widetilde E^2(w,z_3)
\end{align*}
where for $\nu=1,2$ we have  
$\inn{e_i}{\widetilde E^\nu}= I_{i,\nu}+II_{i,\nu}+III_{i,\nu}$ with
\begin{align*}
I_{i,\nu}(w,z_3)&= \int_0^1 (1-s) \sum_{j=1}^3\sum_{k=1}^3 \fS^\nu_{w_iw_jw_k}(sw,sz) w_jw_k \, ds\\
II_{i,\nu}(w,z_3)&=
\int_0^1 (1-s)\, 2\sum_{j=1}^3 
\fS^\nu_{w_iw_jz_3} (sw,sz) w_jz_3\, ds\\
III_{i,\nu}(w,z_3)&= \int_0^1 (1-s) \fS^\nu_{w_iz_3z_3} (sw, sz)z_3^2\,ds
\end{align*}
and obtain the bounds
\begin{align*}
|I_{i,\nu}(w,z_3)|&\le \tfrac 92 M_0 |w|_\infty^2\le \tfrac 92 M_0 2^{-2\ell}
\\ 
| II_{i,\nu}(w,z_3)| &\le \tfrac 62 M_0|w|_\infty |z_3| \le 3M_0 2^{-\ell}\delta_0
\\
| III_{i,\nu}(w,z_3)| &\le \tfrac 12 M_0|z_3|^2 \le \tfrac 12 M_0 \delta_0^2
\end{align*}
Recall $\kappa_0=\fS^2_{w_3z_3z_3} (0,0)$. 
For $i=3$, $\nu=2$ we expand further 
\[III_{3,2}(w,z_3)= \tfrac 12 \ka_0 z_3^2 +E_{3,2}(w,z_3)\]
where
\[|E_{3,2}(w,z_3)|\le M_0\big( \tfrac 32 |w|_\infty|z_3|^2+ \tfrac 12 |z_3|^3\big) \le 2M_0\delta_0^3
\]
(where we used $2^{-\ell}\le \delta_0$). Combining terms  we obtain the stated error estimates.
\end{proof}

\subsubsection{Computations involving $\Delta_1$ and $\Delta_2$}
By the assumption $\delta_0\le 2^{-10}M_0^{-1}$ we have from  above
$|\fS^1_{w_i}(w,z_3)|\le 2$, $|\fS^2_{w_i}(w,z_3)|\le 2$ and
$|\fS^1_{w_iz_3}(w,z_3)|\le 2$. 
Moreover $|\fS^2_{wz_3}(w,z_3)|\le 4M_0\delta_0 \ll 1$.
Using upper bounds for $\fS^i_w$, $\fS^i_{wz_3}$ and higher derivatives,
 the  permutation formula for determinants, trilinearity of the  determinants and differentiation of products we see that
any first order partial derivative  of $\pm \Delta_i$ is 
a sum of $3\cdot 6$ terms, each bounded by $4M_0$.
Hence  
any first order partial derivative  of $\pm \Delta_i^\fS$ is 
bounded by $72M_0$, and similarly, by the structure of the $\Delta_i$, 
any second  order partial derivative  of $\pm \Delta_i^\fS$ is 
bounded by $216M_0$.
Moreover,
any third   order partial derivative  of $\pm \Delta_i^\fS$ is 
bounded by $54\cdot 2M_0^2$. These observations also yield
$$|\Delta_1(w,z_3)-1|, |\Delta_2(w,z_3)| \le 72M_0\delta_0 \le 2^{-10}.$$

In \S\ref{platelocalization} we  shall use a Taylor expansion and rely on 
the conditions 
 \eqref{Sinormal}. This yields
\[\Delta_1(w,0)=1,\quad \Delta_2(w,0)=0,\]
and straightforward computations 
give
\begin{align*} \Delta_{1,z_3} (w,0)&= \fS^1_{w_3z_3z_3}(w,0)+\fS^2_{w_2z_3} (w,0)
\\
\Delta_{2,z_3}(w,0)&= \fS^2_{w_3z_3z_3}(w,0)+ \fS^2_{w_1z_3} (w,0)
\end{align*} and thus
\[\Delta_{1,z_3} (0,0)= \fS^1_{w_3z_3z_3}(0,0),\quad
\Delta_{2,z_3}(0,0)= \fS^2_{w_3z_3z_3}(0,0)=\kappa_0.
\]
Further  calculations  give
\begin{align*}
\Delta_{1, z_3z_3}(0,0)&= 3 \fS^1_{w_1z_3z_3}(0,0)+ \fS^2_{w_2z_3z_3} (0,0)+ \fS^1_{w_3z_3z_3z_3}(0,0)
\\
\Delta_{2, z_3z_3} (0,0)&=2 \fS^2_{w_1z_3z_3}(0,0)+ \fS^2_{w_3z_3z_3z_3}(0,0),
\end{align*}
\begin{align*}
\Delta_{1, w_jz_3} (0,0)&= \fS^2_{w_2w_jz_3}(0,0)+ \fS^1_{w_3w_jz_3z_3}(0,0)
\\
\Delta_{2, w_jz_3} (0,0)&= \fS^2_{w_3w_jz_3z_3}(0,0),\end{align*}
and
\begin{align*}
\Delta_{1, w_jw_k} (0,0)&=  0
\\
\Delta_{2, w_jw_k} (0,0)&= \fS^2_{w_3z_3w_jw_k}(0,0).\end{align*}

\subsection{\it Plate localization in the model case} 
\label{platelocalization} The following lemma contains the  information that will allow us to apply the decoupling inequality \eqref{BD}.
\begin{lemma} \label{platelocalizationlemma}
Let $\delta_0$, $\delta_1$ be as in \eqref{delta0delta1assu}.
Let $2^{-\ell}\ll r$, $M_02^{-\ell} \le 2^{-10}$, $w\in [-2^{-\ell}, 2^{-\ell}]$, $|z_3|\le \delta_0$. 
Suppose $1/4<|\mu|\le 4$ and 
\Be\label{detcond}\big|\mu_1\Delta_1(w,z_3)+\mu_2\Delta_2(w,z_3)\big|\le M_02^{-\ell}.
\Ee
Then \Be\mu_1 \fS^1_w(w,z_3) + \mu_2 \fS^2_w(w,z_3) 
\in \varPi_{A,b}(\delta_1), \quad A=2(1+|\ka_0|).
\Ee
\end{lemma}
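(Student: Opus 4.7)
The plan is to choose the cone parameter $b=z_3$ and directly verify each of the three defining inequalities of the plate $\varPi_{A,b}(\delta_1)$. The naturalness of this choice reflects the fact that $z_3$ plays the role of the curve parameter for the model cone $\Sigma$, which is tangent to the true cone $\Sigma_x$ to second order at the origin precisely because of the normalization assumptions \eqref{Sinormal}.

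The starting point is the Taylor expansion \eqref{fSwexpansions} from \S\ref{fSderivatives}, which gives
\[
\xi := \mu_1 \fS^1_w + \mu_2 \fS^2_w = \mu_1 e_1 + \mu_2 e_2 + \bigl(\mu_1 z_3 + \tfrac 12 \mu_2 \kappa_0 z_3^2\bigr) e_3 + \mu_1 E^1 + \mu_2 E^2,
\]
with the quantitative bounds \eqref{E1bound}--\eqref{E2bound3} on $E^1,E^2$. Simultaneously, Taylor expanding $\Delta_1,\Delta_2$ at the origin (using $\Delta_1(w,0)\equiv 1$, $\Delta_2(w,0)=O(M_0|w|)$, $\Delta_{2,z_3}(0,0)=\kappa_0$) together with the fold-proximity hypothesis $|\mu_1\Delta_1+\mu_2\Delta_2|\le M_0 2^{-\ell}$ yields two consequences that will be used repeatedly: first $|\mu_1|\lesssim M_0\delta_0$ (so that $\mu_1$ is already negligible on the scale of interest), and second
\[
|\mu_1+\kappa_0 z_3\mu_2|\,\lesssim\, M_0 2^{-\ell}+M_0^2\delta_0^2.
\]

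With these in hand I would compute the three inner products $\inn{\xi}{T_1(z_3)}$, $\inn{\xi}{T_2(z_3)}$, $\inn{\xi}{N(z_3)}$. Since $T_1(z_3)=e_2+O(M_0\delta_0)$, the first equals $\mu_2$ up to an error of size $M_0^2\delta_0^2$; combined with $|\mu|\ge 1/4$ and $|\mu_1|\ll 1$ this pins $|\mu_2|$ in $[1/5,4]$, so the normalized $T_1$-component lies in $[A^{-1},A]$ for an appropriate $A$ depending only on $\kappa_0$. Using $T_2(z_3)=e_1+\kappa_0 z_3 e_2+z_3 e_3+O(\delta_0^3)$, direct expansion gives $\inn{\xi}{T_2(z_3)}=\mu_1+\kappa_0 z_3\mu_2+O(M_0\delta_0^2)$, which by the second consequence above is $\lesssim M_0 2^{-\ell}+M_0^2\delta_0^2$. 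The decisive point is the normal direction: a purely algebraic check shows
\[
\bigl\langle\mu_1 e_1+\mu_2 e_2+(\mu_1 z_3+\tfrac 12\mu_2\kappa_0 z_3^2)e_3,\;z_3 e_1+\tfrac 12\kappa_0 z_3^2 e_2-e_3\bigr\rangle=0,
\]
so that with $b=z_3$ the polynomial part of $\xi$ contributes exactly nothing, and $\inn{\xi}{N(z_3)}$ is entirely controlled by $\mu_1 E^1+\mu_2 E^2$. Using \eqref{E1bound}--\eqref{E2bound3} and $|\mu_1|\lesssim M_0\delta_0$, this is bounded by $O(M_0^2\delta_0^3+M_0\delta_0 2^{-\ell})$.

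To finish I would match these bounds against the required tolerances $A$, $A\delta_1$, $A\delta_1^2$. Squaring the hypothesis $\delta_1\ge 2^{100}M_0\max\{(2^{-\ell}\delta_0)^{1/2},\delta_0^{3/2}\}$ gives $\delta_1^2\ge 2^{200}M_0^2\max\{2^{-\ell}\delta_0,\delta_0^3\}$, which absorbs both $M_0^2\delta_0^3$ and $M_0\delta_0 2^{-\ell}$ into $\delta_1^2$, securing the normal condition; the $T_2$ bound $M_0 2^{-\ell}+M_0^2\delta_0^2$ is absorbed into $\delta_1$ by combining the same inequality with the upper bound $\delta_0\le 2^{-20}M_0^{-2}$, which forces $M_0^2\delta_0^2\ll\delta_1$. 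The conceptual content of the lemma is therefore the exact cancellation in the $N$-direction produced by setting $b=z_3$; the main technical obstacle is simply the careful bookkeeping of the Taylor remainders against the scales $\delta_0$, $\delta_1$, and $2^{-\ell}$.
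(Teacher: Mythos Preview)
Your argument is correct and takes a genuinely different route from the paper's. The paper does not set $b=z_3$; instead it works with a general $b$ satisfying $|z_3-b|\le\delta_1$ (which is what the application in \S\ref{proofofprop} ultimately needs, since a single $b_I$ must serve for all $z_3$ in the interval $I$). To handle general $b$, the paper rewrites
\[
\mu_1\fS^1_w+\mu_2\fS^2_w=\frac{1}{\Delta_1}\Bigl[(\mu_1\Delta_1+\mu_2\Delta_2)\fS^1_w+\mu_2(\Delta_1\fS^2_w-\Delta_2\fS^1_w)\Bigr],
\]
recognizing $\Delta_1\fS^2_w-\Delta_2\fS^1_w$ as the cone generator $\Xi$, and then carries out a fairly long second-order Taylor expansion of this combination at the origin. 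The inner products with $T_2(b)$ and $N(b)$ then produce nonzero main terms $\kappa_0(b-z_3)$ and $\tfrac12\kappa_0(b-z_3)^2$, which are bounded by $|\kappa_0|\delta_1$ and $\tfrac12|\kappa_0|\delta_1^2$ respectively; the error analysis is similar in spirit to yours but with more bookkeeping.

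Your choice $b=z_3$ collapses those main terms to zero, which is why your computation is shorter. The price is that the statement you actually prove is $\xi\in\varPi_{A,z_3}(\delta_1)$ rather than $\xi\in\varPi_{A,b_I}(\delta_1)$ for the fixed $b_I$; to feed into \S\ref{proofofprop} you would need the standard plate-comparability fact that $\varPi_{A',b'}(\delta_1)\subset\varPi_{CA',b}(\delta_1)$ whenever $|b-b'|\le\delta_1$ (which follows since $N(b)-N(b')$ has $T_1(b')$-component $O((b-b')^2)$ and $T_2(b')$-component $O(b-b')$ in the orthogonal frame). Alternatively, your exact-cancellation identity in the $N$-direction immediately generalizes: inserting general $b$ into the same polynomial computation yields precisely $\tfrac12\kappa_0\mu_2(b-z_3)^2+\mu_1(b-z_3)$ rather than zero, and both terms are $O(\delta_1^2)$ by the estimates you already have on $\mu_1$.
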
 
\begin{proof}

We examine the quantity
$\mu_1 \fS^1_w+\mu_2\fS^2_w$,  for $1/4<|\mu|\le 4$ and under the condition \eqref{detcond},
and rewrite it as
\Be \label{twoterms}\frac{1}{ \Delta_1} \Big( (\mu_1\Delta_1+\mu_2\Delta_2)
\fS^1_w
+   \mu_2(\Delta_1 \fS^2_w- \Delta_2 \fS^1_w)\Big).
\Ee
The assumption \eqref{detcond} and $|\mu|\in (1/4,4)$ implies that $|\mu_1|\le 2^{-8}$ and hence $|\mu_2|\in (1/5,4)$.

The second expression  in \eqref{twoterms} is the main term for our analysis. 
We  use a  Taylor expansion:
\begin{align}\label{Taylorexpansion}
\Delta_1 \fS^2_w-\Delta_2 \fS^1_w&\Big|_{(w,z_3)}
=  e_2 
+ v_0 z_3 +\sum_{j=1}^3 v_j x_j
\\&+ \frac{1}{2} \Big(v_{0,0} z_3^2
+ 2\sum_{j=1}^3 v_{0,j} z_3 w_j
+ \sum_{j=1}^3\sum_{k=1}^3  v_{j,k}  w_j w_k\Big)
+ \cE(w,z_3)\notag
\end{align}
where $\cE(w,z_3)$ is the Taylor reminder which vanishes of third order. 
Since $\Delta_1(w,0)=1$, $\Delta_2(w,0)=0$  the leading term is $e_2$.
For the $\bbR^3$-valued coefficients of the linear term we get 
(with all terms on the right hand side evaluated at $0$, and using  input from \S\ref{fSderivatives})
\begin{align*}
v_{0}&=\Delta_{1, z_3}\fS^2_w+\Delta_1 \fS^2_{wz_3} 
-\Delta_{2,z_3} \fS^1_w- \Delta_2 \fS^1_{wz_3}\Big|_{0,0}
\\&= \fS^1_{w_3z_3z_3}(0,0)e_2-\kappa_0 e_1
\end{align*}
and, for $j=1,2,3,$
\begin{align*}
v_j&=\Delta_{1,w_j} \fS^2_w+\Delta_1 \fS^2_{ww_j}
-\Delta_{2,w_j} \fS^1_w-\Delta_2 \fS^1_{ww_j}\Big|_{(0,0)}\,=\,0.
\end{align*}
For the coefficients of the quadratic terms we have
\begin{align*}
v_{0,0}& =\Delta_{1,z_3z_3} \fS^2_w+2 \Delta_{1,z_3} \fS^2_{wz_3} +\Delta_1 \fS^2_{wz_3z_3}
\\&\quad-\Delta_{2,z_3z_3} \fS^1_w-2 \Delta_{2,z_3} \fS^1_{wz_3} -\Delta_2 \fS^1_{wz_3z_3}
\Big|_{(0,0)}
\\
=& (\fS^2_{w_1z_3z_3}-\Delta_{2,z_3z_3})e_1
+(\fS^2_{w_2z_3z_3}+\Delta_{1,z_3z_3})e_2 
+ (\fS^2_{w_3z_3z_3}-2\Delta_{2,z_3})e_3\Big|_{(0,0)};
\end{align*}
in particular
\Be\label{quadraticmainterm} \inn{v_{0,0}}{e_3} =-\ka_0.\Ee
Moreover, for $j=1,2,3$, 
\begin{align*}
v_{0,j}=& \Delta_1\fS^2_{ww_jz_3} +\Delta_{1,w_j} \fS^2_{wz_3}+\Delta_{1,z_3}\fS^{2}_{ww_j}+\Delta_{1, w_jz_3} \fS^2_w
\\
&-
\Delta_2\fS^1_{ww_jz_3} -\Delta_{2,w_j} \fS^1_{wz_3}-\Delta_{2,z_3}\fS^{1}_{ww_j}-\Delta_{2, w_jz_3} \fS^1_w\Big|_{(0,0)}
\\
=& (\fS^2_{w_1w_jz_3}-\Delta_{2,w_jz_3})e_1+(\fS^2_{w_2w_jz_3}+\Delta_{1,w_jz_3})e_2+
\fS^2_{w_3w_jz_3} e_3\Big|_{(0,0)},
\end{align*}
and, for $j,k=1,2,3$,
\begin{align*}
v_{j,k}=& \Delta_1\fS^2_{ww_jw_k} +\Delta_{1,w_j} \fS^2_{ww_k}+\Delta_{1,w_k}\fS^{2}_{ww_j}
+\Delta_{1, w_jw_k} \fS^2_w
\\
&-
\Delta_2\fS^1_{ww_jw_k} -\Delta_{2,w_j} \fS^1_{ww_k}-\Delta_{2,w_k}\fS^{1}_{ww_j}-\Delta_{2, w_jw_k} \fS^1_w\Big|_{(0,0)}
\\
=& \Delta_{1,w_jw_k}(0,0)e_2-
\Delta_{2,w_jw_k}(0,0)e_1.
\end{align*}

Gathering terms in the above Taylor expansion 
leads to 
\begin{align}\label{Taylorexpansionsimplified}
&\Delta_1 \fS^2_w-\Delta_2 \fS^1_w\Big|_{(w,z_3)}
=e_2   -\ka_0 z_3 e_1 -
\tfrac 12 \ka_0  z_3^2 e_3
\\
&   \,\,\, +\fS^1_{w_3z_3z_3}(0,0)z_3 e_2+\sum_{j=1}^3 \fS^2_{w_3w_jz_3}(0,0) w_jz_3 e_3+\sum_{i=1}^2 r_i(w,z_3) e_i +\cE_3(w,z_3)\notag
\end{align}
where we get 
\begin{subequations}\label{errorsest} 
\Be \label{errorone} 
|\fS^1_{w_3z_3z_3}(0,0)z_3|\le M_0\delta_0, 
\Ee and by assumption \eqref{deccond},
\Be\label{errortwo}
\sum_{j=1}^3| \fS^2_{w_3w_jz_3}(0,0) w_jz_3 |
 \le 3M_0 2^{-\ell} \delta_0 \ll \delta_1^2.
\Ee
For the quadratic error terms in the first two coordinates we have
\Be\label{riest}
|r_i(w,z_3) |\le  8M_0\delta_0^2\   ,\quad i=1,2,
\Ee
and finally for the cubic error terms we have the straightforward  estimate 
\Be\label{cubicerror} |\cE_3(w,z_3)|\le 2^{20} M_0^2\delta_0^3.\Ee
\end{subequations} 

 Now consider the situation where $|z_3-b|\le \delta_1$. 
 Let $T_2(b)$ be as in \eqref{T2def}, that is, 
 $T_2(b)=e_1 +\ka_0 be_2+ be_3 +  O(b^3)$ with $1/2\le |T_2(b)|\le 2$. We  compute
\begin{equation} \label{widthintangdir}
\inn{\tfrac{T_2(b)}{|T_2(b)|}}{\Delta_1 \fS^2_w-\Delta_2 \fS^1_w}\Big|_{(w,z_3)}
= \tfrac{1}{|T_2(b)|} \kappa_0 (b-z_3)
 +\cE_{T_2}(w,z_3)
 \end{equation}
where (\cf. \eqref{deccond})
\[|\cE_{T_2}(w,z_3)| \le 2^{13} M_0\delta_0^2\le 
2^{13} M_0 (2^{-100}M_0^{-1} 
\delta_1)^{4/3}\ll\delta_1.\]

The computation for the normal component is more subtle. With $N(b)=
be_1+\tfrac 12\ka_0  b^2e_2- e_3$, we consider the contributions of the terms in the above Taylor expansion to 
$\inn{N(b)}{\Delta_1 \fS^2_w-\Delta_2 \fS^1_w}$.
We get
\begin{align*}
\inn{N(b)}{\Delta_1 \fS^2_w-\Delta_2 \fS^1_w}&=\tfrac 12\ka_0  b^2 -\ka_0b z_3
+\tfrac 12 \ka_0 z_3^2 
\\
& +
\tfrac 12\ka_0 b^2 z_3 \fS^1_{w_3z_3z_3}(0,0)-
\sum_{j=1}^3 \fS^2_{w_3w_jz_3}(0,0) w_jz_3 
\\&
+br_1(w,z_3)+ \tfrac 12\ka_0 b^2 r_2(w,z_3) + \inn{N(b)}{\cE_3(w,z_3)}.
\end{align*}
By \eqref{errorsest},
\begin{align}\label{widthinnormaldir}
&\inn{\tfrac{N(b)}{|N(b)|}}
{\Delta_1 \fS^2_w-\Delta_2 \fS^1_w}=
\tfrac{1}{|N(b)|}\big(
\tfrac 12\kappa_0 (z_3-b)^2 + \cE_N(w,z_3)\big)
\\
&\text{ with }  |\cE_N(w,z_3) |\le 2^{21} M_0^2\delta_0^3 +2^{2-\ell}M_0\delta_0\ll \delta_1^2
\notag\end{align}
where for the error estimate we have used \eqref{deccond}.
Clearly the main term on the right hand side is $\le |\ka_0|\delta_1^2/2$.

This finishes the analysis of the second term in \eqref{twoterms}.
Finally consider the first term in  \eqref{twoterms}, again under the assumption \eqref{detcond}. We get the estimates
\[
 | (\mu_1\Delta_1+\mu_2\Delta_2) \inn{\fS^1_w(w,z_3)} {T_i(b)}|\le 10M_0 2^{-\ell} \ll \delta_1, \quad i=1,2\]
and
\begin{align*}
 &| (\mu_1\Delta_1+\mu_2\Delta_2) \inn{\fS^1_w(w,z_3)} {N(b)}|\\&\le  2^{-\ell}|\inn{e_1+z_3 e_3} {be_1-e_3}| +10^2 M_0^2 2^{-\ell}\delta_0^2 \le 2^{2-\ell} \delta_1 \ll \delta_1^2.
\end{align*}
The proof is completed by combining terms. \end{proof}

\subsection{\it Proof of the decoupling step in the model case}\label{proofofprop}

Fix $b$ and let $m_{k,\delta_1, b}$ be a multiplier that is equal to $1$ on $\varPi_{2A,b}(\delta_1)$ and equal to $0$
on $\bbR^3\setminus \varPi_{3A,b}(\delta_1)$, and satisfies the natural differentiability properties
\begin{multline*}
\big|\inn{T_1(b)}{\nabla}^{\alpha_1}
\inn{T_2(b)}{\nabla}^{\alpha_2}
\inn{N(b)}{\nabla}^{\alpha_3} m_{k,\delta_1, b}(\xi)\big|
\\
\lc_{\alpha} 2^{-k\alpha_3} (2^{k}\delta_1)^{-\alpha_2} 
(2^{k}\delta_1^2)^{-\alpha_3} 
.\end{multline*} Let $P_{k,\delta_1, b} $ be defined by 
$\widehat{P_{k,\delta_1, b} f}= m_{k,\delta_1, b} \widehat f$. Let $I$ be an interval of length $\delta_1$ and let $f_I(y)=f(y)\bbone_{I}(y_3)$. The Schwartz kernel of  \[f\mapsto (I-P_{k,\delta,b})[\vsig_{\ell,0}\cT_{k,\ell} f]\] is given as a sum of  oscillatory integrals $\sum_{n=0}^\infty K_{n,k,\ell} $ where for $n>0$
\begin{multline*}K_{n,k,\ell}(w,z)\,=\,
2^{2k} 
\iiint 
  e^{i (\inn{w-v}{\xi} +
  2^k\langle \tau,  \fS(v, z_3) -z'\rangle}\vsig_{\ell,0}(v)
  \times \\
(1-m_{k,\delta_1,b}(\xi)) 
\eta_1(|\xi|2^{-n})
 \chi_{k, \ell}(v, z, 2^k\tau)  dv \, d\xi d\tau\,\,\bbone_I(z_3),
\end{multline*}
with
\[\chi_{k,\ell}(v,z,2^k\tau):=
\eta \bigl(2^{\ell}\alpha(w,z)(\tau_1 \Delta_1^\fS(w,z_3) + \tau_2 \Delta_2^{\fS}(w,z_3)) \bigr)
\zeta(w, z,\tau,k) \, d\tau\,\]
and the family $\zeta(\cdot,\cdot,\cdot,k)$ is bounded  uniformly in $C_c^\infty$.
If $|n-k|> 10$,  then  repeated integration by parts 
in the $v$-variables (followed by subsequent integration by parts in the $\xi$-variables) 
shows that
\[|K_{n,k,\ell}(w,z) |\lc \min \{2^{-10 n}, 2^{-10 k}(1+|w-z|)^{-N}\},\quad |k-n|\ge C.
\]
For $|k-n|\le C$ a similar argument applies to  the 
assumption that on the support of $(1-m_{k,\delta_1,b})$ we have $2^{-k}\xi \notin \varPi_{3A,b}(\delta_1)$.
That means 
\[\big|\,\nabla_v[-\inn{v}{\xi} +\inn{2^k\tau}{\fS(v,z_3)}]\,\big|\ge c 2^k\delta_1^2.\]
Differentiating the amplitude gives a factor of $2^\ell$ with each differentiation.
Thus for $|k-n|\le C$ an $N$-fold integration by parts
in the $v$ variables followed by integration by parts in  the $\xi$-variables shows that
\[|K_{n,k,\ell}(w,z) |\lc_N (2^k\delta_1^2 2^{-\ell})^{-N}(1+|w-z|)^{-N_1},\quad |k-n|\le C.
\]
Notice that by $\ell\le k/3$ and $\delta_1\ge 2^{-(1-\eps^2)\ell} $ we have \[2^k\delta_1^2 2^{-\ell}
\ge 2^{k\eps^2 /3}.
\]
Thus a $\floor{40/\eps^2}$-fold integration by parts in $v$ (again followed by multiple integration by parts in $\xi$) yields 
\[|K_{n,k,\ell}(w,z) |\lc  2^{-11 k} (1+|w-z|)^{-N_1}.\]
Let $b_I$ be the left endpoint of the interval $I$.
We decompose the left hand side of \eqref{decouplingestmodel} as 
\Be
\label{maindecompdecjunk}
\Big\| 
\sum_{I\in \cI_J} P_{k,\delta_1,b_I} [\vsig_{\ell,0} \cT_{k,\ell} g_I]\Big\|_p 
+\Big\| \sum_{I\in \cI_J} (I-P_{k,\delta_1,b_I} )[\vsig_{\ell,0} \cT_{k,\ell} g_I]\Big\|_p 
\Ee
By Lemma \ref{platelocalizationlemma} we can apply the decoupling inequality \eqref{BD} (with $\epsilon$ replaced by $\epsilon^2$)  to bound  the first term in \eqref{maindecompdecjunk} 
by
\begin{align*} &C(\eps^2, A) \delta^{-\eps^2} \Big(\sum_{I\in \cI_J}
\big \| P_{k,\delta_1,b_I} [\vsig_{\ell,0} \cT_{k,\ell} g_I]\big\|_p^p\Big)^{1/p}
\\&\lc
C(\eps^2, A) \delta^{-\eps^2} \Big(\sum_{I\in \cI_J}
\big \| \vsig_{\ell,0} \cT_{k,\ell} g_I\big\|_p^p\Big)^{1/p}
\end{align*}
For the second term in \eqref{maindecompdecjunk} we use the above error estimates,  apply Minkowski's inequality and get the bound
\eqref{maindecompdecjunk}  by
\begin{align*}
&2^{-11k}\sum_{I\in \cI_J} \Big(\int\Big|\int (1+|w-z|)^{-N} |g(z)\bbone_{I}(z_3)| dz\Big|^p dw\Big)^{1/p} \lc 2^{-10 k} \|g\|_p.
\end{align*}
This finishes the proof of Proposition \ref{decouplingstepmodelprop}.
\qed

\section{\it Families of changes of variables}
\label{changesofvarsect}
Let $P^\circ=(a^\circ, y^\circ)\in \cM$,
with $y^\circ=S^1(a^\circ, b^\circ),S^2(a^\circ, b^\circ),b^\circ)$.
For $r>0$ let 
\[Q(r):=\{(x,y_3): |x-a^\circ|_\infty\le r\} \text{ and }
I(r):=\{y_3: |y_3-b^\circ|\le r\}.  \]

Let $S^i$ be smooth functions in a neighborhood of $Q(2r_0)\times I(2r_0)$, for some $r_0>0$.
After possibly permuting the variables $y_1$, $y_2$ we may assume, by Lemma \ref{Deltas} that
$\Delta_1(x,y_3)=\det (S^1_x, S^2_x, S^1_{xy_3})\neq 0$ on  $Q(2r_0)\times I(2r_0))$.
Choose $M$ so that  \[M> 2+\|S\|_{C^5(Q(2r_0)\times I(2r_0))}+ \max_{(x,y_3)\in Q(2r_0)} |\Delta_1(x,y_3)|^{-1}.\]

We now consider $(a,b)$ close to $(a^\circ, b^\circ)$
and construct changes of variables so that  in the new coordinates  theconstant coefficient decoupling theorem in Proposition \ref{decouplingstepmodelprop} can be applied at suitable scales. The idea of applying a constant coefficient decoupling  theorem in a variable coefficient situation also appears in \cite{bhs}. 

For $a\in Q(2r_0)$, $b\in I(2r_0)$ let  $\Gamma_1$, $\Gamma_2$ be as in \eqref{Gadef}, and let   $\rho\equiv \rho(a,b)\in \bbR^3$ be defined by
\Be\label{defofrho}
(\rho_1,\rho_2,\rho_3)= \frac{1}{\Delta_1(a,b)}\big( -\Gamma_2(a,b), 
\Gamma_1(a,b), 
\Delta_2(a,b)\big) 
\Ee

For $(x,y_3),(a,y_3) \in Q(r_0)$ and $(a,y_3)
\in I(2r_0)$
consider the function 
\begin{align*}
&(x,a,y_3)\mapsto \fw(x,a,y_3)
\\
&Q(r_0)\times Q(r_0)\times I(2r_0)\to \bbR^3
\end{align*} defined by

\Be\label{fwdef}
\begin{pmatrix}
\fw_1\\ \fw_2\\ \fw_3\end{pmatrix}\,=\,
\begin{pmatrix}  S^1(x,b)-S^1(a,b),
\\
S^2(x,b)-\rho_3(a,b)S^1(x,b) -
S^2(a,b)+\rho_3(a,b)S^1(a,b) 
\\
S^1_{y_3}(x,b)-S^1_{y_3}(a,b)
\end{pmatrix}.
\Ee

We  have 
\Be\label{Detwx}\det (D\fw(x,a,b)/Dx)
= \det (S^1_x, S^2_x-\rho_3 S^1_x, S^1_{x,y_3})|_{(x,b)}=\Delta_1(x,b).\Ee
By the implicit function theorem there exists $r_1>0$ with $r_1<r_0$ such that for $|w|_\infty<2r_1$, $|a-a^\circ| <2r_1$, $b-b_\circ|<2r_1$ the equation
$\fw(x,a,b)=w$ is solved by a unique  $C^\infty$ function \Be
\label{xdefin}x=\fx(w,a,b).\Ee

Note the estimate \Be\label {rhoiupperbd}
|\rho_i(a,b)|\le 6M^4, \text{ for $a\in Q(2r_0)$, $b\in I(2r_0)$.}\Ee By the definition of $\fw$ and the mean value theorem for the coordinate functions this implies $|\fw(x,a,b)|_\infty\le 3M(1+6M^4)|x-a|_\infty$
for $x,a\in Q(r_0)$, $b\in I(2r_0)$.
Hence if $r_2<r_1$ and if $|x-a^\circ|_\infty<r_2$ and
$|a-a^\circ|_\infty<r_2$ then $|\fw(x,a,b)|_\infty \le 42 M^5 r_2$ 
and if we define \Be\label{r2def}r_2= (50 M^5)^{-1}r_1\Ee we get 
$|\fw(x,a,b)|_\infty <r_1$ for $x,a\in Q(r_2)$, $b\in I(2r_1)$. By the uniqueness of the function $\fx$ we thus see that $\fx(\fw(x,a,b))=x$ for $x,a \in Q(r_2)$
for $x,a\in Q(r_2)$, $b\in I(2r_1)$.

We will also need to change variables in the $y$-variables, in a more explicit form.
Define 
 \Be\label{fzdef}\fz=(\fz_1,\fz_2,\fz_3): \bbR^2\times Q(2r_0)\times I(2r_0) \to \bbR^3\Ee by 
\[\fz_1(y,a,b)= y_1-S^1(a,y_3), \qquad \fz_3(y,a,b)=y_3-b,\] and
\begin{align*}
&\fz_2(y,a,b)=\\ &y_2-\rho_3(a,b) y_1 -S^2(a,y_3)+\rho_3 (a,b)S^1(a,y_3) - (y_3-b) \sum_{i=1}^2 \rho_i(y_i-S^i(a,y_3)).
\end{align*}
We  have 
\Be\label{Detzy}\det (D\fz/Dy)= (1-\rho_2 (y_3-b)).\Ee 
By \eqref{rhoiupperbd} this quantity lies in $(1/2,3/2)$ provided that $y_3,b\in I(2r_3)$ with 
\Be\label{r3def} r_3<\min\{r_1, (24M^4)^{-1}\}.\Ee

The inverse $z\mapsto \fy(z,a,b)$, defined for $|z_3|\le r_3$, $|b-b^\circ|\le r_3$, $|a-a^\circ|\le 2r_0$,  is given by
\Be\label{fydef}\begin{aligned}
\fy_1(z,a,b)&= z_1+ S^1(a,b+z_3),
\\
\fy_2(z,a,b)&= \frac{z_2+z_1(\rho_3(a,b)+\rho_1(a,b)z_3)+ (1-z_3) S^2(a, b+z_3)}{1-\rho_2(a,b)z_3},
\\
\fy_3(z,a,b)&=b+z_3.
\end{aligned}
\Ee

\begin{lemma} The functions $\fx$, $\fy$ defined above have the following properties.

(i) $\fx(0,a,b)=a$, $\fy (0,a,b)= (S^1(a,b), S^2(a,b),b)$, $\fy_3(z,a,b)=b+z_3$.

(ii) $\det \big(\frac{D\fx(w,a,b)}{Dw} \big) = \frac{1}{\Delta_1(\fx(w,a,b),b)}.$

(iii) Let $\rho\equiv \rho(a,b)$ be as in \eqref{defofrho}
and let
\Be \label{defofB} B(z_3,a,b)= \begin{pmatrix}
1&0\\
-\rho_3-\rho_1 z_3& 1-\rho_2z_3
\end{pmatrix}.\Ee 
 Then for $|z_3|\le r_3$, $|a-a^\circ|_\infty\le r_2$, $|w|\le r_2$
  \Be\label{SversusfrakS}
B(z_3,a, b)
\begin{pmatrix} S^1(\fx(w,a,b),b+z_3)-\fy_1(z,a,b)
\\ S^2(\fx(w,a,b),b+z_3)-\fy_2(z,a,b)\end{pmatrix}=
\begin{pmatrix}
\fS^1(w,z_3, a,b)-z_1
\\ \fS^2(w,z_3,a,b)-z_2\end{pmatrix} 
\Ee
where $\fS^i$ are $C^\infty$ with
\Be \label{fSpureder} \fS^1(w,0)=w_1, \quad \fS^2(w,0)=w_2, \quad \fS^1_{z_3}(w,0)=w_3;
   \Ee
   moreover 
   \Be \label{fScondsecondder}\fS^2_{wz_3}(0,0,a,b)=0.
   \Ee
  
(iv) Let \begin{align*}
\Delta_i^S(x,y_3)&= \det (S^1_x, S^2_x, S^1_{xy_3})|_{(x,y_3)}, \\ \Delta_i^\fS(w,z_3)&= \det (\fS^1_w, \fS^2_w, \fS^1_{wz_3})|_{(w,z_3)}.  \end{align*}
Then, for 
$(\tau_1,\tau_2)= (\mu_1,\mu_2) B(z_3,a,b),$
\Be \label{Deltarelation}\sum_{i=1}^2 \tau_i\Delta_i^S(\fx(w,a,b), b+z_3)= \frac{\Delta_1^S (\fx(w,a,b),b)}
{1-\rho_2(a,b) z_3}\,
\sum_{i=1}^2\mu_i\Delta_i^\fS(w,z_3).
\Ee 

(v) Let  $\ka$  be  as in \eqref{kappadef}. Then
\Be\label{curvature}\fS^2_{w_3z_3z_3} (0,0, a,b)= \frac{\kappa(a,b)}{\Delta_1(a,b)^2}.
\Ee
\end{lemma}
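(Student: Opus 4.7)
Parts (i) and (ii) follow at once from the definitions. Setting $x=a$ in \eqref{fwdef} gives $\fw(a,a,b)=0$ identically, so by uniqueness $\fx(0,a,b)=a$, and then \eqref{fydef} yields the asserted formulas for $\fy(0,a,b)$. Part (ii) is the inverse function theorem for $\fw$ combined with the Jacobian identity \eqref{Detwx}.

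For (iii) I would unfold the matrix product on the left hand side of \eqref{SversusfrakS}. Writing $F(w,z_3):=S^1(\fx(w,a,b),b+z_3)-S^1(a,b+z_3)$ and $G(w,z_3):=S^2(\fx(w,a,b),b+z_3)-S^2(a,b+z_3)$, a direct expansion that uses the explicit form of $\fy$ in \eqref{fydef} shows that the $z_1,z_2$ terms cancel precisely because of the choice of $B(z_3,a,b)$; what remains is $\fS^1=F$ and $\fS^2=-(\rho_3+\rho_1 z_3)F+(1-\rho_2 z_3)G$, each a smooth function of $(w,z_3,a,b)$ alone. Evaluating at $z_3=0$ and comparing with the three components of $\fw$ in \eqref{fwdef} gives \eqref{fSpureder}. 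The second-order condition \eqref{fScondsecondder} is the crucial content: computing $\fS^2_{wz_3}(0,0,a,b)$ via the chain rule, using $(D\fx/Dw)(0,a,b)=[D\fw/Dx(a,b)]^{-1}$, reduces the required vanishing to an algebraic relation which, after substituting the formulas for $\rho$ from \eqref{defofrho}, becomes equivalent to $\Gamma_2 S^1_x-\Gamma_1 S^2_x-\Delta_2 S^1_{xy_3}+\Delta_1 S^2_{xy_3}=0$ at $(a,b)$. This is exactly the kernel-field identity \eqref{kernelfieldprop} established earlier; the particular choice of $\rho$ in \eqref{defofrho} was arranged precisely so that this identity forces \eqref{fScondsecondder}. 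This is the main obstacle of the lemma in the sense that it is the computation that motivates the whole change-of-variables setup.

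For (iv), the plan is to write each derivative $\fS^i_w$ and $\fS^i_{wz_3}$ via the chain rule as a row vector times $M^{-1}$, where $M:=D\fw/Dx$ evaluated at $(\fx(w,a,b),a,b)$ and $\det M=\Delta_1^S(\fx,b)$ by \eqref{Detwx}. Factoring $M^{-1}$ out of the determinant defining $\Delta_i^\fS$ and using multilinearity and antisymmetry of $\det$ to eliminate the $S^1_x$ contributions appearing inside the row $S^2_x-\rho_3 S^1_x$, one obtains $\Delta_1^\fS$ and $\Delta_2^\fS$ as explicit linear combinations of $\Delta_1^S(\fx,b+z_3)$ and $\Delta_2^S(\fx,b+z_3)$, each divided by $\Delta_1^S(\fx,b)$. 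Substituting $(\tau_1,\tau_2)=(\mu_1,\mu_2)B(z_3,a,b)$ into the resulting expression matches the two sides of \eqref{Deltarelation}.

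Part (v) is the other substantial computation. I would first observe that $\fS^2(0,z_3)\equiv 0$, which follows because $\fx(0,a,b)=a$ makes both $F$ and $G$ vanish at $w=0$; this kills all purely $z_3$-derivatives and reduces $\fS^2_{w_3z_3z_3}(0,0)$ to a linear combination of the quantities $F_{w_3z_3}(0,0)$, $G_{w_3z_3}(0,0)$, $F_{w_3z_3z_3}(0,0)$, $G_{w_3z_3z_3}(0,0)$ with coefficients in $\rho_1,\rho_2,\rho_3$. Each of these four quantities is the inner product of a derivative of $S^\nu$ with the third column of $[D\fw/Dx(a,b)]^{-1}$; solving $Mv=e_3$ identifies that column as $(S^1_x\wedge S^2_x)/\Delta_1$ at $(a,b)$. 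Consequently the four quantities evaluate respectively to $\Delta_i/\Delta_1$ and $(\Delta_{i,y_3}-\Gamma_i)/\Delta_1$, the latter obtained by differentiating $\Delta_i=\det(S^1_x,S^2_x,S^i_{xy_3})$ in $y_3$. Substituting the formulas for $\rho$ from \eqref{defofrho} and collecting terms over $\Delta_1^2$, the numerator collapses to $\Gamma_2\Delta_1-\Gamma_1\Delta_2+\Delta_1\Delta_{2,y_3}-\Delta_2\Delta_{1,y_3}$, which is exactly $\kappa(a,b)$ by \eqref{kappadef}, giving \eqref{curvature}.
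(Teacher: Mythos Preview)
Your proposal is correct and follows essentially the same route as the paper's proof. The computations in parts (iv) and (v) match almost line for line: the paper also identifies $(D\fx/Dw)e_3$ with $(S^1_x\wedge S^2_x)/\Delta_1$ via Cramer's rule, evaluates the same four inner products you list, and collapses the result to $\kappa/\Delta_1^2$.

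The one noticeable organizational difference is in part (iii). The paper proceeds \emph{constructively}: it Taylor-expands $S^i(x,y_3)-y_i$ in the intermediate variables $(\tilde x,\tilde y_3)$, reads off coefficients $\underline\rho_i=\langle A^{-1}e_i,S^2_{xy_3}(a,b)\rangle$, uses Cramer's rule to identify $\underline\rho_i=\rho_i$, and then sees that the choice $c_i=-\rho_i$ kills the mixed $\tilde x_i\tilde y_3$ terms in the second row, which is what forces \eqref{fScondsecondder}. You instead take the definitions of $\fw$, $\fz$, $B$ as given, write $\fS^1=F$, $\fS^2=-(\rho_3+\rho_1z_3)F+(1-\rho_2z_3)G$ directly, and verify \eqref{fScondsecondder} by appealing to the already-established kernel-field identity \eqref{kernelfieldprop}. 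These are two sides of the same coin: the paper's Cramer's-rule step is exactly a rederivation of \eqref{kernelfieldprop}. Your verificational route is arguably cleaner once the change of variables is on the table, while the paper's expansion explains \emph{why} $\rho$ and $B$ were chosen as they were.
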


\begin{proof}
We write for $i=1,2$
\begin{align*} 
S^i(x,y_3)-y_i&=  S^i(a,b) + S^i(x,b)-S^i(a,b)
\\&+ S^i(x,y_3)-S^i(a,y_3)-S^i(x,b)+S^i(a,b)
\\&+ S^i(a,y_3)-S^i(a,b)-y_i
\end{align*}
and set 
\begin{align*}
\tilde x_i&= S^i(x,b)-S^i(a,b), \quad i=1,2, 
\\
\tilde x_3&= S^1_{y_3}(x,b)-S^1_{y_3}(a,b)
\end{align*}
so that
\[\det \big(\frac{D\tilde x}{Dx} \big) = \Delta_1^S(x,b).
\]
Also let 
\begin{align*}
\tilde y_i&= y_i- S^i(a,y_3), \quad  i=1,2.
\\
\tilde y_3&=y_3-b
\end{align*}
 Note that
 $\tilde x^\intercal = (x-a)^\intercal A^\intercal +O(|x-a|^2)$ where 
 $A^\intercal$ is the matrix with column vectors $(S^1_x, S^2_x, S^1_{xy_3})|_{(a,b)} $.
We then expand 
\begin{align*}
S^1(x,y_3)-y_1&= \tilde x_1-\tilde y_1+ \tilde x_3\tilde y_3 + R_{1,1} (\tilde x,\tilde y_3,a,b) 
\\
S^2(x,y_3) -y_2&= \tilde x_2-\tilde y_2 + \tilde y_3\sum_{i=1}^3 \underline{\rho}_i\tilde x_i + R_{2,1}(\tilde x, \tilde y_3,a,b)
\end{align*}
where
\Be\label{gammai}
\urho_i= \inn {A^{-1}e_i}{ S^2_{x,y_3}(a,b)},
\Ee
and
where $R_{1,1}$, $R_{2,1}$ vanish to third order with no pure $\tx$ or pure $\ty_3$ terms, moreover  $\partial_{\ty_3} R_{1,1}$ has no pure $\tx$ terms. We label $R_{1,1}$ an  error term of {\it type $I$} and 
$R_{2,1}$ an error term of {\it type $II$}. Precisely, an error term of type $I$ is of the form
\begin{subequations}\Be
 \label{cubicI}\tilde y_3^2 \sum_{i=1}^3\tilde x_i\tilde \beta_i(\tilde x,\tilde y_3, a,b) 
 %+ y_3 \sum_{j=1}^3\sum_{k=1}^3 \tilde x_j\tilde x_k \beta_{jk}(\tilde x,\tilde y_3,a,b)
\Ee
with $\tilde \beta_i$ smooth,
and a term of type $II$ is of the form
\Be \label{cubic2}
y_3 \sum_{j=1}^3\sum_{k=1}^3 \tilde x_j\tilde x_k \tilde \beta_{jk}(\tilde x,\tilde y_3,a,b) \,+\, \text{term of type $I$},
\Ee
with  $\tilde \beta_{jk} $ smooth. \end{subequations}
 Note that  $(\urho_1,\urho_2,\urho_3)$ 
satisfies 
\[
\urho_1 S^1_x(a,b)+\urho_2 S^2_x(a,b)+ \urho_3 S^1_{xy_3}(a,b) = S^2_{xy_3}(a,b) 
\]
and hence, by Cramer's rule, we see that 
$\Delta_1(\urho_1,\urho_2,\urho_3)= (-\Ga_2,\Ga_1, \Delta_2)$, i.e. 
\[\urho_i=\rho_i\] where $\rho_i$ is as in \eqref{defofrho}.

Given  $c_1, c_2,c_3\in \bbR$ we compute
\begin{align*}
&(c_3 +c_1\ty_3) (S^1(x,y_3)-y_1) + (1+c_2 \ty_3)(S^2(x,y_3)-y_2)
\\&= (\tx_2+c_3\tx_1)-(\ty_2+c_3\ty_1)
\\ &- \ty_3(\sum_{i=1}^3 \tx_i(\rho_i+c_i))
-c_1\ty_1\ty_3 -c_2\ty_2\ty_3 + R_{2,2}(\tx, \ty_3,a,b)
\end{align*}
where $R_{2,2}$ is an error term of type $II$.
We choose $c_i=-\rho_i(a,b)$ 
so that the mixed quadratic terms drop out.

We now change variable in $\tilde x$ and in $\tilde y$ separately,  setting
\[
z_1=\ty_1,\quad z_2= \ty_2-\rho_3\ty_1-\rho_1\ty_1\ty_3-\rho_2\ty_2\ty_3, \quad z_3=\ty_3
\]
and 
\[ 
w_1=\tx_1, \quad w_2= \tx_2-\rho_3 \tx_1,\quad w_3=\tx_3.\]
Define
\[\fS^i(w,z_3,a,b)=S^i(\fx(w,a,b), b+z_3), \quad i=1,2.\]
Setting
\[ B(z_3,b)= \begin{pmatrix}
1&0\\
-\rho_3-\rho_1 z_3& 1-\rho_2z_3
\end{pmatrix}\] 
 we obtain   
 \Be\label{SversusfrakSdef}
B(y_3-b,b)
\begin{pmatrix} S^1(x,y_3)-y_1
\\ S^2(x,y_3)-y_2\end{pmatrix}=
\begin{pmatrix}
\fS^1(w,z_3, a,b)-z_1
\\ \fS^2(w,z_3,a,b)-z_2\end{pmatrix} 
\Ee
if $w=\fw(x,a,b)$ and $y=\fy(z,a,b)$ and $\fw$ and $\fy$ are as in 
\eqref{fwdef} and  \eqref{fydef}. Now
\eqref{SversusfrakSdef} implies  \eqref{SversusfrakS}.

The functions $\fS^1$, $\fS^2$ satisfy
 \begin{align*}\fS^1(w,z_3, a,b)&= w_1+ w_3z_3 + R_{1,3} (w,z_3,a,b)
 \\
 \fS^2(w,z_3, a,b)&= w_2+  R_{2,3} (w,z_3,a,b)
 \end{align*} where $R_{1,3}$ is an error term of type $I$ (with $(\tilde x,\tilde y_3)$ replaced by
  $(w,z_3)$, cf. \eqref{cubicI})  and $R_{2,3} $ is a term of type $II$ (again in the $(w,z_3)$-variables, \cf. \eqref{cubic2}).
 We see that  \eqref{SversusfrakS} and  \eqref{fSpureder},  
 \eqref{fScondsecondder}
  hold.

In order to obtain \eqref{Deltarelation} we calculate
\begin{multline*}\sum_{i=1}^2 (B^\intercal \mu)_i \Delta_i^S(\fx(w,a,b), b+z_3)= \\
\det
\big( \nabla_w(S^1(\fx, y_3)),
\nabla_w(S^2(\fx, y_3)), \nabla_w (\inn{B^\intercal\mu}{S_{y_3}(\fx,y_3)})\big) \det 
\frac{D\fw}{Dx}(\fx)
\end{multline*}
with $\fx\equiv \fx(w)\equiv \fx(w,a,b)$.
We have
$$\nabla_w \fS^1(\fx(w), z_3)=\nabla_w (S^1(\fx(w), b+z_3)),$$
and, with $b_{22}(z_3)= 1-\rho_2z_3$, 
$$\nabla_w \fS^2(\fx(w), z_3)=b_{22}(z_3) \nabla_w (S^2(\fx(w), b+z_3)) - 
b_{21}(z_3) \nabla_w (S^1(\fx(w), b+z_3));$$
moreover
$$\nabla_w \fS^1_{z_3}(\fx(w), b+z_3)
=\nabla_w (S^1_{y_3}(\fx(w), b+z_3)),$$
and
\begin{multline*}\nabla_w \fS^2_{z_3}(\fx(w), b+z_3)=
-\rho_1 \nabla_w (S^1(\fx(w), b+z_3))- 
\rho_2 \nabla_w (S^2(\fx(w), b+z_3))
\\+ b_{21} (z_3) \nabla_w (S^1_{y_3}(\fx(w), b+z_3))
+ b_{22} (z_3) \nabla_w (S^1_{y_3}(\fx(w), b+z_3)).
 \end{multline*}
 A quick calculation with determinants and \eqref{Detwx} yields the asserted identity \eqref{Deltarelation}.

For the curvature calculation we start with the equation \eqref{SversusfrakS} for the second component  and differentiate with respect to $w_3$. This yields
\begin{multline*}
\fS_{w_3}^2(w,z_3) =\\
(-\rho_3-\rho_1z_3) \big (\frac{D\fx}{Dw}e_3\big)^\intercal S^1_x(\fx,b+z_3)
+ (1-\rho_2z_3)
\big (\frac{D\fx}{Dw}e_3\big)^\intercal S^2_x(\fx,b+z_3).
\end{multline*}
here  the Jacobian $D\fx/Dw$ is evaluated at $(w,b)$.
We differentiate twice with respect to $z_3$ to obtain
\begin{align*}
&\fS_{w_3z_3z_3}^2(w,z_3) =
-2\rho_1 \big (\frac{D\fx}{Dw}e_3\big)^\intercal S^1_{xy_3}(\fx,b+z_3)
 -2\rho_2
\big (\frac{D\fx}{Dw}e_3\big)^\intercal S^2_{xy_3}(\fx,b+z_3)
\\&
-(\rho_3+\rho_1z_3) \big (\frac{D\fx}{Dw}e_3\big)^\intercal S^1_{xy_3y_3}(\fx,b+z_3)
+ (1-\rho_2z_3)
\big (\frac{D\fx}{Dw}e_3
\big)^\intercal S^2_{xy_3y_3}(\fx,b+z_3)
\end{align*}
where $\fx\equiv \fx(w,a,b)$. 
Using Cramer's rule we find that
\[
\frac{D\fx}{Dw}e_3 \big|_{(w,a,b)}  = \frac{1}{\Delta_1(\fx(w,a,b),b)}S^1_x\wedge S^2_x\big|_{(\fx(w,a,b),b)}.
\]
We evaluate the previous identity at $z_3=0$, and $w=0$
to obtain
\begin{align*}
\fS_{w_3z_3z_3}^2(0,0) =&\frac{1}{\Delta_1(a,b)}\Big( 
-2\rho_1 \inn {S^1_x\wedge S^2_x}{S^1_{xy_3}}
-2\rho_2
\inn{S^1_x\wedge S^2_x}{ S^2_{xy_3}}
\\&
-\rho_3 
\inn{S^1_x\wedge S^2_x}{ S^1_{xy_3y_3}}
+ \inn{S^1_x\wedge S^2_x}{ S^2_{xy_3y_3}}\Big|_{(a,b)} \Big)
\end{align*}
Using  \eqref{defofrho} we see that $\fS_{w_3z_3z_3}^2(0,0)$ equals
\begin{align*}
&\frac{1}{\Delta_1} 
\big(-2\rho_1 \Delta_1-2\rho_2\Delta_2- \rho_3(\Delta_{1,y_3}-\Gamma_1)+ (\Delta_{2,y_3}-\Gamma_2) \big)\Big|_{(a,b)}
\\=&
\frac{1}{\Delta_1^2}\big(2\Gamma_2\Delta_1- 2\Gamma_1\Delta_2-
(\Delta_{1,y_3}-\Gamma_1)\Delta_2+(\Delta_{2,y_3}-\Gamma_2)\Delta_1\big)\Big|_{(a,b)}
\end{align*}
which equals $\kappa(a,b)/\Delta_1(a,b)^2$ so that \eqref{curvature} is proved.
\end{proof}

\section{Decoupling in the general case}\label{decouplingstepsect}
 
 We consider the operator $\cR_{k.\ell}$ as in \eqref{Rkldef}. With  $\chi$ is as in \eqref{defofchik}  we  assume that $\chi$ is zero if $x\notin [-r_2/2, r_2/2]$ or if $y_3\notin [r_3/2, r_3/2]$ (see the paragraph leading to \eqref{xdefin} and  \eqref{r2def}, \eqref{r3def}).
 
 \begin{proposition}
  \label{decouplingstepprop}
  Let $0<\epsilon<1/2$, $\ell\le \floor{k/3}$.
 Let $\delta_0, \delta_1 \in (2^{-\ell (1-\epsilon^2)}, 2^{-\ell\epsilon^2} )$  such that \Be \max \{ (2^{-\ell} \delta_0)^{1/2}, \delta_0^{3/2} \}< \delta_1<\delta_0.\Ee
 Let $J$ be an interval of length $\delta_0$, near $b^\circ$,  and let $\cI_J$ be a collection of intervals of length $\delta_1$ which have disjoint interior and which intersect $J$. For each $I$, let $f_I$ be defined by $f_I(y)=f(y)\bbone_I(y_3)$. Let $a\in \bbR^3$, $\eps_0= (10M)^{-4}$,  $\vth\in C^\infty_c$ supported in $(-r_2,r_2)^3$ and $\vth_{\ell,a}(x)=\vth (2^{\ell}\eps_0^{-1}(x-a)).$
 Then
for $2\le p\le 6$,
\Be\label{decouplingstepest}
\Big\| \vth_{\ell,a} \sum_{I\in \cI_J} \cR_{k,\ell} f_I\Big\|_p 
\le C_\epsilon (\delta_0/\delta_1)^\epsilon
\Big(\sum_{I\in \cI_J} \big\|\vth_{\ell,a}\cR_{k,\ell} f_I \big\|_p^2\Big)^{1/2} + C_{N,\epsilon} 2^{-kN} \|f\|_p.
\Ee
The constants do not depend on the choice of $J$ and $\cI_J$.
\end{proposition}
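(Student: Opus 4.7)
The plan is to reduce Proposition \ref{decouplingstepprop} to the model-case Proposition \ref{decouplingstepmodelprop} via the changes of variables constructed in Section \ref{changesofvarsect}, then transfer the resulting decoupling back to the original $(x,y)$ coordinates.

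First, I fix the reference point $b=b^\circ$ and apply the changes of variables $x=\fx(w,a,b^\circ)$, $y'=\fy'(z,a,b^\circ)$, $y_3=b^\circ+z_3$ introduced in \eqref{xdefin}, \eqref{fydef}. In the oscillatory integral representation \eqref{Rkldef} of $R_{k,\ell}$, I also substitute $(\tau_1,\tau_2)=(\mu_1,\mu_2)B(z_3,a,b^\circ)$ with $B$ as in \eqref{defofB}. By the key identity \eqref{SversusfrakS}, the phase transforms into $2^k\langle\mu, \fS(w,z_3,a,b^\circ) - z'\rangle$, where $\fS^i(\cdot,\cdot,a,b^\circ)$ satisfy the normalization conditions \eqref{fSpureder}, \eqref{fScondsecondder}. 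By \eqref{Deltarelation}, the cutoff $\eta(2^\ell(\tau_1\Delta_1^S+\tau_2\Delta_2^S))$ in the amplitude becomes $\eta(2^\ell \alpha(w,z)(\mu_1\Delta_1^\fS+\mu_2\Delta_2^\fS))$, where
\[
\alpha(w,z)= \frac{\Delta_1^S(\fx(w,a,b^\circ),b^\circ)}{1-\rho_2(a,b^\circ)z_3}
\]
is smooth and satisfies \eqref{alphaequivalence} for some $M_0$ depending only on the $C^5$-norm of $S$ (and the lower bound for $\Delta_1$). The Jacobians of the $\tau$-, $x$-, and $y$-substitutions, namely $\det B$, $[\Delta_1^S(\fx,b^\circ)]^{-1}$, $(1-\rho_2 z_3)^{-1}$, are all smooth and uniformly bounded, and together with the original cutoff $\zeta$ they can be absorbed into the amplitude $\zeta$ permitted in \eqref{tildeRkl}.

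Next I check that the support conditions line up. Since $\vth$ is supported in $(-r_2,r_2)^3$ and $\vth_{\ell,a}$ forces $|x-a|_\infty\le r_2\eps_0 2^{-\ell}$, the estimate $|\fw(x,a,b^\circ)|_\infty\le 42M^5|x-a|_\infty$ from \S\ref{changesofvarsect} shows that the transported cutoff $\vsig_{\ell,0}(w):=\vth_{\ell,a}(\fx(w,a,b^\circ))$ is supported in $\{|w|_\infty \le 2^{-\ell}\}$, matching the model-case cutoff once $\vsig(\cdot)=\vth_{\ell,a}(\fx(\cdot/2^\ell,a,b^\circ))$ is seen to be $C^\infty_c((-1,1)^3)$ with derivatives bounded by $M$-dependent constants. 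Moreover the fold condition plus \eqref{curvature} gives
\[
\fS^2_{w_3z_3z_3}(0,0,a,b^\circ) = \frac{\ka(a,b^\circ)}{\Delta_1(a,b^\circ)^2}\ne 0,
\]
so we may set $\ka_0 =\fS^2_{w_3z_3z_3}(0,0,a,b^\circ)$ in the model case. The intervals $I\in\cI_J$ transform to intervals $I'$ in the $z_3$ axis of the same length $\delta_1$, contained in a common interval of length $\delta_0$.

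Regarding the parameter conditions: Proposition \ref{decouplingstepmodelprop} requires $\delta_0,\delta_1\in(M_0^2 2^{20-\ell(1-\eps^2)},2^{-\ell\eps^2-20}M_0^{-2})$ and $2^{100}M_0\max\{(2^{-\ell}\delta_0)^{1/2},\delta_0^{3/2}\}<\delta_1$. For $\ell$ larger than some threshold $\ell_\eps$ depending on $\eps$ and $M$, the weaker conditions in Proposition \ref{decouplingstepprop} imply these after a harmless reduction in $\eps$ (replace $\eps$ by $\eps/2$, say). For $\ell\le \ell_\eps$ the inequality \eqref{decouplingstepest} is trivial: the sum has at most $\delta_0/\delta_1 \ll 2^{C\ell_\eps}$ terms, so using the $L^p$-boundedness of $\cR_{k,\ell}$ (which follows from Theorem \ref{Rklthm} or, more cheaply, the representation of its kernel together with the tensor decay used to prove \eqref{infty}) combined with Cauchy--Schwarz, the $(\delta_0/\delta_1)^\eps$ factor on the right-hand side absorbs the loss.

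Now I apply Proposition \ref{decouplingstepmodelprop} to the transformed functions $g_{I'}(z):=f(\fy(z,a,b^\circ))|\det D\fy/Dz|^{1/p}\bbone_{I'}(z_3)$, obtaining
\[
\Big\|\vsig_{\ell,0}\sum_{I'}\cT_{k,\ell} g_{I'}\Big\|_p \le C_\eps(\delta_0/\delta_1)^\eps\Big(\sum_{I'}\|\vsig_{\ell,0}\cT_{k,\ell}g_{I'}\|_p^2\Big)^{1/2} + C_\eps 2^{-10k}\|g\|_p.
\]
Changing variables back, the $L^p$ norms differ only by bounded Jacobian factors depending on $M$, which are absorbed into $C_\eps$. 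This yields \eqref{decouplingstepest} with $N=10$. To obtain arbitrary $N$, I note that the exponent in the model-case error bound comes from a $\lfloor 40/\eps^2\rfloor$-fold integration by parts in the $v$ and $\xi$ variables; integrating by parts more times trades powers of $2^k$ for powers of $2^{k\eps^2/3}$, so by taking $\eps$ smaller and iterating the integration by parts, the $2^{-10k}$ can be replaced by $2^{-Nk}$ for any $N$.

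The main obstacle is the bookkeeping of the change of variables, in particular tracking how the amplitude $\chi_{k,\ell}$ transforms into the form \eqref{tildeRkl} with the auxiliary function $\alpha$: this is precisely the reason that Section \ref{modelcasesect} was set up to accommodate a general amplitude multiplier $\alpha$ rather than insisting on the pure fold-distance cutoff. With this flexibility built in, the reduction to the model case is a mechanical (if lengthy) calculation using the identities of Section \ref{changesofvarsect}.
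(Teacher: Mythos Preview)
Your overall strategy matches the paper's: transfer to the model case via the changes of variables of \S\ref{changesofvarsect}, apply Proposition~\ref{decouplingstepmodelprop}, and transfer back. There is, however, one genuine error.

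You fix $b=b^\circ$, whereas the paper takes $b\in J$. This is not cosmetic. The model-case Proposition~\ref{decouplingstepmodelprop} requires the intervals to lie in $[0,\delta_0]$, and more fundamentally the plate localization Lemma~\ref{platelocalizationlemma} relies on $|z_3|\le\delta_0$ (see \eqref{wz_3assu}) for the Taylor-expansion errors in \S\ref{platelocalization} to be $\ll\delta_1$ or $\ll\delta_1^2$ as needed. The hypothesis ``$J$ near $b^\circ$'' only means $J\subset I(r_3)$; indeed, in the proof of Theorem~\ref{decouplingthm} the proposition is applied with $J$ ranging over all dyadic sub-intervals of the $y_3$-support of $\chi$. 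With your shift $z_3=y_3-b^\circ$, the intervals can land at distance $\sim r_3\gg\delta_0$ from the origin, where the normalizations \eqref{Sinormal} no longer control the errors. The fix is immediate: choose $b$ to be (say) the left endpoint of $J$, so that $z_3\in[0,\delta_0]$ throughout. The machinery of \S\ref{changesofvarsect} was set up for arbitrary $(a,b)$ near $(a^\circ,b^\circ)$ precisely to permit this.

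A minor remark on your bookkeeping: defining $g_{I'}(z)=f(\fy(z))\,|\det D\fy/Dz|^{1/p}\,\bbone_{I'}(z_3)$ works but obscures a cancellation. The paper takes $g(z)=f(\fy(z))$ with no Jacobian, because $\det(D\fy/Dz)\cdot\det B=(1-\rho_2 z_3)^{-1}(1-\rho_2 z_3)=1$ by \eqref{Detzy} and \eqref{defofB}; thus $\cR_{k,\ell}f(\fx(w))$ is \emph{exactly} of the form $\cT_{k,\ell}g(w)$, and only the bounded $x\to w$ Jacobian $|\det D\fx/Dw|=1/\Delta_1^S(\fx,b)$ enters when comparing $L^p$ norms. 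Your extra factor $|\det D\fy/Dz|^{1/p}$ can be absorbed into the amplitude $\zeta$ of $\cT_{k,\ell}$, so no harm is done, but the cleaner route is available.
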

\begin{proof}
Fix $a$ near $a^\circ$ and $b\in J$. We apply 
\eqref{SversusfrakS} and then  the changes of variables  
$y=\fy(z,a,b)$ in \eqref{fydef} and $\tau= {B^\intercal}^{-1}(z_3,a,b))\mu$.
Note from \eqref{Detzy}, \eqref{defofB} that $\det(D\fy/Dz)\det B=1$. Let $f(y)=\sum_{I\in \cJ_I} f\bbone_I(y_3)$ and $g(z,a,b)= f(\fy(z,a,b))$.

Let 
\Be \label{M1eps0}M_1\ge 1+\sum_{i=1}^2\sup_{(a,b)\in [-r_0,r_0]^4}\|\fS^i(\cdot,a,b)\|_{C^5([-r_0,r_0]^4)}.
%\quad\eps_0<r< 10^{-10} M_0^{-1}, 
\Ee which is just the uniform version of the condition  \eqref{M0eps0}. By applications of H\"older's inequality
it suffices to prove \eqref{decouplingstepest} under a slightly more restrictive assumptions on $\delta_0, \delta_1$, namely
\begin{align*}&\delta_0, \delta_1 \in (M_1^2 2^{20-\ell (1-\epsilon^2)}, 2^{-\ell\epsilon^2-20} M_1^{-2})\\& 2^{100} M_1\max \{ (2^{-\ell} \delta_0)^{1/2}, \delta_0^{3/2} \}< \delta_1<\delta_0.
\end{align*}
These are the uniform versions of \eqref{delta0delta1assu} which will allow us to apply Proposition \ref{decouplingstepmodelprop}. 

We have
\begin{multline*}
\cR_{k,\ell} f(x)
= 2^{2k}\iint e^{i2^k \inn{\mu}{\fS(w(x,a,b), z_3)-z'} }\tilde\chi_{k,\ell}(x,z,\mu, a,b)
 g(z,a,b) d\mu dz
\end{multline*}
with
\begin{multline*}
\tilde\chi_{k,\ell}(x,z,\mu, a,b)\,=\,
\chi(x,\fy(z,a,b))\eta_1(|{B^\intercal}^{-1}(z_3,a,b)\mu|) \times\\\eta \big(2^\ell \tfrac{\Delta_1(x)}{1-\rho_3(a,b) z_3} (\mu_1\Delta_1^\fS(w,z_3,a,b)+\mu_2\Delta_2^\fS(w,z_3,a,b))\big). 
\end{multline*}
Hence we get, with $\vsig_{\ell,a}(w) := \vth_{\ell,a} (\fx(w,a,b))$,
\[\vth_{\ell,a} (\fx(w,a,b))
\sum_I \cR_{k,\ell} f_I(\fx (w,a,b))=\vsig_{\ell,0} (w)\sum_{I\in\cJ_I} \cT_{k,\ell,a,b} g_I(w)
\]
where $g_I(z,a,b)= g(z,a,b)\bbone_{-b+I} (z_3)$ and $\cT_{k,\ell}\equiv \cT_{k,\ell,a,b}$ is as in \eqref{tildeRkl}.

We can now write the  left hand side of \eqref{decouplingest} as 
\begin{align*}
&\Big(\int\Big| \vth_{\ell,a}(\fx(w,a,b))  \sum_{I\in \cI_J} \cR_{k,\ell} f_I(\fx(w,a,b))\Big|^p |\det (\tfrac{D\fx}{Dw}| 
dw\Big)^{1/p}
\\
&\lc \Big\|\vsig_{\ell,0}  \sum_{I\in \cI_J} \cT_{k,\ell} g_I \Big\|_p
\end{align*} where we used uniform  upper bounds on $|\det (\tfrac{D\fx}{Dw})| $. 
By  Proposition \ref{decouplingstepmodelprop} we can bound
\[ \Big\|\vsig_{\ell,0}  \sum_{I\in \cI_J} \cT_{k,\ell} g_I \Big\|_p\le C_\epsilon (\delta_0/\delta_1)^\ep 
\Big(\sum_{I\in \cI_J} \big\|\vsig_{\ell,0}\cT_{k,\ell} g_I \big\|_p^2\Big)^{1/2} + C_{\epsilon} 2^{-10k} \|g\|_p.
\]
Undoing the above change of variable 
(and using 
uniform  lower  bounds on $|\det (\tfrac{D\fx}{Dw})| $) 
we may bound this, using Proposition
\ref{decouplingstepmodelprop}, 
 by
\[C_\epsilon' (\delta_0/\delta_1)^\ep 
\Big(\sum_{I\in \cI_J} \big\|\vth_{\ell,a}\cR_{k,\ell} f_I \big\|_p^2\Big)^{1/2} + C_{\epsilon} 2^{-10k} \|f\|_p.\qedhere
\]
\end{proof}

\begin{proof}[Proof of Theorem \ref{decouplingthm}]
We may assume $\eps<1/10$.
Let $\vth\in C^\infty_c(\bbR^3)$ supported in $(-1,1)^3$ such that $\vth\ge 0$ everywhere and $\sum_{n\in \bbZ^3}\vth(\cdot-n)=1$.

Let, for $n\in \bbZ^3$,
 $\zeta^{\ell,n}(x)= \ups(x)\zeta(2^\ell\eps_0^{-1}x-n)$.
 Thus
 \Be\|\ups\cR_{k,\ell} f\|_p \lc \Big(\sum_{n\in \bbZ^3}
 \big\| \vth^{\ell,n} \cR_{k,\ell} f\big\|_p^p\Big)^{1/p}.
 \Ee
 
 Now let $\cI(m)$ be the family of dyadic intervals with length $2^{-m}$.
 Let $I'$ be a dyadic interval of length $\ge 2^{-m}$ then we denote by
 $\cI(m,I')$ the collection of dyadic intervals which are of length $2^{-m}$ and are contained in $I'$.
 For any dyadic interval define 
 $f_I(y)= f(y)\bbone_{I}(y_3)$. 
 Let $m_0= \floor{\ell\eps^2}$.
 By H\"older's inequality,
 \Be\label{zerocaseind}
 \big\| \vth^{\ell,n} \cR_{k,\ell} f\big\|_p
 \le 2^{m_0(1-\frac 1p)}
 \Big(\sum_{J\in \cI(m_0)} \big\|\vth^{\ell,n}\cR_{k,\ell} f_J\big\|_p^p\Big)^{1/p}
 \Ee
 
 It is not hard to see that we can pick a sequence of integers
 $$m_1,\dots, m_{N(\ell)}$$  such that $m_j\le m_{j+1}\le\ell$
 for $j=0,\dots, N(\ell)-1$, and such that
 \begin{subequations}\label{mjseq}
 \Be m_{j+1}\le \min \big\{ \floor{\tfrac {3 m_j}{2}}, \floor{\tfrac{m_j+\ell}{2}} \big\};\Ee
 moreover \Be 
 \label{N(ell)def}
 m_{N} \ge \floor {\ell(1-\eps^2)}, \text{ and } 
  N(\ell)\le C_\eps\log_2(\ell).\Ee
  \end{subequations}

We claim that for $j=0,\dots,N(\ell)-1$  
\begin{multline}\label{jcaseind}\big\| \vth^{\ell,n} \cR_{k,\ell} f\big\|_p
 \le  C_{\eps^2}^{j} 2^{m_0(1-\frac 1p)+ (m_j-m_0)(\frac 12-\frac 1p+\eps^2)} 
 \Big(\sum_{I\in \cI(m_j)} \big\|\vth^{\ell,n}\cR_{k,\ell} f_I\big\|_p^p\Big)^{1/p}
  \\+ 2^{-9k}\Big(\sum_{\nu=0}^{j-1} C_{\eps^2}^\nu\Big )\|f\|_p
 \end{multline}
 We show this by induction. The case $j=0$ is covered by \eqref{zerocaseind}. For the induction step assume 
 \eqref{jcaseind} for some $j<N(\ell)-1$.
  Observe that for $I\in \cI(m_j)$ Proposition \ref{decouplingstepprop} and H\"older's inequality give
 \begin{align*}&\big\|\vth^{\ell,n}\cR_{k,\ell} f_I\big\|_p
 =\Big\|\vth^{\ell,n}\sum_{I'\in \cI(m_{j+1},I)} 
 \cR_{k,\ell} f_{I'}\big\|_p
 \\ &\le
 C_{\eps^2} 2^{(m_{j+1}-m_j)\eps^2}
  \Big(\sum_{I'\in \cI(m_{j+1},I)} \|\vth^{\ell,n}\cR_{k,\ell} f_{I'}\big\|_p^2\Big)^{1/2} + C_{\eps^2} 2^{-10 k} \|f_I\|_p
  \\&\le
 C_{\eps^2} 2^{(m_{j+1}-m_j)(\frac 12-\frac 1p+\eps^2)}
  \Big(\sum_{I'\in \cI(m_{j+1},I)} \|\vth^{\ell,n}\cR_{k,\ell} f_{I'}\big\|_p^p\Big)^{1/p}+C_{\eps^2} 2^{-10 k} \|f_I\|_p
 \end{align*}
We use the induction hypothesis  \eqref{jcaseind} and by the last inequality we  bound
$\big\| \vth^{\ell,n} \cR_{k,\ell} f\big\|_p$ by
\begin{align*} 
& C_{\eps^2}^{j+1} 2^{m_0(1-\frac 1p)+ (m_{j+1}-m_0)(\frac 12-\frac 1p+\eps^2)} 
 \Big(\sum_{I'\in \cI(m_{j+1})} \big\|\vth^{\ell,n}\cR_{k,\ell} f_{I'}\big\|_p^p\Big)^{1/p}
  \\&+ C_{\eps^2}^j 
  2^{m_0(1-\frac 1p)+ (m_{j+1}-m_0)(\frac 12-\frac 1p+\eps^2)} 2^{-10 k} \Big(\sum_{I\in \cI(m_{j})} \big \|f_I\big\|_p^p\Big)^{1/p}
  \\&+2^{-9k}\Big(\sum_{\nu=0}^{j-1} C_{\eps^2}^\nu\Big)\|f\|_p.
  \end{align*}
  Since 
$m_0(1-\frac 1p)+ (m_{j+1}-m_0)(\frac 12-\frac 1p+\eps^2)\le k$ and 
$(\sum_{I\in \cI(m_{j})} \big \|f_I\big\|_p^p)^{1/p}\le \|f\|_p$ we obtain the case for $j+1$ of \eqref{jcaseind}.

We consider the case $j=N(\ell)$ of \eqref{jcaseind}.
Observe that each interval $I\in \cI(m_{N(\ell)})$ is the union of 
$2^{\ell-m_{N(\ell)}}$ dyadic intervals of 
length $2^{-\ell}$. We also sum in $n\in\bbZ^3$ and use the finite overlap of the supports of $\zeta^{\ell,n}$. Observe that the cardinality of the index set of $n$ which give a nonzero contribution is $O(2^{3\ell})=O(2^k)$.
We get 
\begin{align*}&\big\|  \cR_{k,\ell} f\big\|_p
\lc \Big(\sum_{n\in \bbZ^3}\|\zeta^{\ell,n} 
\cR_{k,\ell} f\big\| _p^p\Big)^{1/p}
\\
& \le  C_{\eps^2}^{j} 2^{m_0(1-\frac 1p)+ (m_j-m_0)(\frac 12-\frac 1p+\eps^2)+(\ell-m_{N(\ell)})(1-\frac 1p)}
 \Big(\sum_{I\in \cI(\ell)} \sum_{n\in \bbZ^3}\big\|\zeta^{\ell,n} \cR_{k,\ell} f_I\big\|_p^p\Big)^{1/p}
  \\&+ 2^{-8k}\Big(\sum_{\nu=0}^{N(\ell)-1} C_{\eps^2}^\nu\Big)
  \|f\|_p.
 \end{align*}
Observe
\begin{multline*}m_0(1-\frac 1p)+ (m_j-m_0)(\frac 12-\frac 1p+\eps^2)+(\ell-m_{N(\ell)})(1-\frac 1p)\\
\le \ell( \eps^2(1-\frac 1p)+(1-2\eps^2)(\frac 12-\frac 1p+\eps^2)\le \ell(2\eps^2+1/2-1/p)\end{multline*} and
(with $N(\ell)$ as in \eqref{N(ell)def})
\[\sum_{l=0}^{N(\ell)-1} C_{\eps^2}^l \lc (1+\ell)^{B(\eps)}\] for some large constant $B(\eps)$.
This yields the assertion of the theorem.
\end{proof}
\section{$L^p$-Sobolev estimate} \label{endpoint-bound}
In order to prove our Sobolev estimate we have to combine the estimates for the operators $\cR_k$.
Here we use a special case of Theorem 1.1. in \cite{prs}. In what follows the operators $P_k$ are 
defined by $\widehat {P_k f}(\xi)=\phi(2^{-k}\xi)\widehat f$,
where $\phi$ is supported in $\{\xi:\tfrac 12<|\xi|<2\}$

\begin{proposition} (\cite{prs})
Assume $\eps>0$, $p_0<p<\infty$ and $\la>1$. We are given   operators $T_k$, $k>0$, with smooth Schwartz kernels $K_k$  (acting on functions in $\bbR^3$) satisfying
\begin{subequations}\begin{align}\label{Tkassu1}
&\sup_{k>0} 2^{k/p} \|T_k\|_{L^p\to L^p} \le A 
\\ \label{Tkassu2}
&\sup_{k>0} 2^{k/p_0} \|T_k\|_{L^{p_0}\to L^{p_0}} \le B_0. 
\end{align} Assume that for each cube $Q$ there is a measurable exceptional set $E_Q$ such that \Be\label{Tkassumeas}\meas(E_Q)\le \la\max \{\diam(Q)^2, |Q|\}\Ee
 and such that for every $k>0$ and every cube $Q$ with $2^k\diam(Q)\ge 1$ we have
\Be \label{Tkassu3} \int_{\bbR^3\setminus E_Q} |K_k(x,y)|dy \le B_1 \max 
\{(2^k\diam(Q)^{-\eps}, 2^{-k\eps}\} \text{ for a.e. $x\in Q$.}
\Ee
\end{subequations}
Then for $q>0$, 
\begin{multline}\label{Lpresult}\Big\|\Big(\sum_{k>0} 2^{kq/p} |P_kT_k f_k|^q\Big)^{1/q} \Big\|_p\\
\lc A \Big[ \log \big(3+\frac{ B_0^{\frac{p_0}{p}} (A\la^{1/p}+B_1)^{1-\frac{p_0}{p}}}{A}\big) \Big]^{\frac 1q-\frac 1p}
\Big(\sum_k\|f_k\|_p^p\Big)^{\frac 1p}.
\end{multline}
\end{proposition}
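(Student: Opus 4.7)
The plan is to deduce \eqref{Lpresult} by combining a trivial endpoint estimate with a refined weak-type estimate obtained through a vector-valued Calder\'on--Zygmund decomposition, then interpolating to produce the characteristic logarithmic factor.

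Set $M_k := 2^{k/p}P_k T_k$. The hypothesis \eqref{Tkassu1} and the uniform $L^p$-boundedness of $P_k$ give $\|M_k\|_{L^p \to L^p} \lc A$, which yields the trivial endpoint
\[
\Big\|\Big(\sum_k|M_k f_k|^p\Big)^{1/p}\Big\|_p^p\,=\,\sum_k\|M_kf_k\|_p^p\,\lc\, A^p\sum_k\|f_k\|_p^p,
\]
corresponding to the case $q=p$ of \eqref{Lpresult} without loss. This is the first of two endpoints for the interpolation; note that it is already sharp when $q\ge p$, so only the case $q<p$, where the factor $[\log(\cdot)]^{1/q-1/p}$ is bigger than $1$, is genuinely at stake.

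For the second endpoint I would establish a weak-type substitute of the form $\|(M_k f_k)_k\|_{L^{p_0,\infty}}\lc C_\star\,\|(f_k)\|_{L^p(\ell^p)}$ with $C_\star := B_0^{p_0/p}(A\la^{1/p}+B_1)^{1-p_0/p}$, via a Calder\'on--Zygmund decomposition of the scalar majorant $F := (\sum_k |f_k|^p)^{1/p}$ at a height $\alpha$ to be balanced against the $\ell^q$-scale. This produces Whitney cubes $\{Q_j\}$ with $\sum_j|Q_j|\lc \alpha^{-p}\|F\|_p^p$, a good piece $g_k$ with $\|(\sum_k |g_k|^p)^{1/p}\|_\infty\lc \alpha$, and bad pieces $b_{k,j}$ supported in $Q_j$ with the usual $p$-th-power cancellation. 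The good part is controlled by invoking \eqref{Tkassu2}, which, after interpolating the $L^p$ and $L^\infty$ bounds on $g_k$, yields a $B_0$-type contribution with the interpolation exponent $p_0/p$. For the bad part, one splits $M_k b_{k,j}$ between the set $\bigcup_j(Q_j^*\cup E_{Q_j})$ and its complement: on the former, \eqref{Tkassumeas} bounds the measure by $\la\sum_j|Q_j|$ and the $L^p$-estimate \eqref{Tkassu1} produces the factor $A\la^{1/p}$ via Chebyshev; on the latter, the kernel decay \eqref{Tkassu3} yields $\min\{(2^k\diam Q_j)^{-\epsilon},2^{-k\epsilon}\}$, which sums to $O(1)$ in $k$ and contributes the factor $B_1$. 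Together these produce the $A\la^{1/p}+B_1$ contribution combined with $B_0$ through real interpolation between the $L^{p_0}$ and $L^p$ inputs, giving the constant $C_\star$.

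To finish, I would invoke a real-interpolation theorem for the $L^p(\ell^q)$-scale between the trivial endpoint with constant $A$ and the weak-type endpoint with constant $C_\star$. The standard log-loss phenomenon at the boundary of real interpolation produces the factor $[\log(3+C_\star/A)]^{1/q-1/p}$ appearing in \eqref{Lpresult}, with the exponent $1/q-1/p$ reflecting the distance of the target $\ell^q$-index from the trivial $\ell^p$-index. The main obstacle is the careful execution of the vector-valued Calder\'on--Zygmund decomposition: balancing the level $\alpha$, tracking each of the four constants $A$, $B_0$, $B_1$, $\la$ through the decomposition, and exploiting the exceptional-set hypothesis \eqref{Tkassumeas} precisely in tandem with the off-diagonal kernel bound \eqref{Tkassu3}, so that the $L^p$-estimate on $E_Q$ and the kernel decay off $E_Q$ combine into $A\la^{1/p}+B_1$ rather than either factor alone. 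The interpolation step itself is standard modulo this bookkeeping.
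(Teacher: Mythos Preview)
The paper does not prove this proposition at all: it is quoted verbatim from \cite{prs} (note the attribution ``(\cite{prs})'' in the statement) and used as a black box in \S\ref{endpoint-bound} to deduce \eqref{Sobresult}. So there is no ``paper's own proof'' to compare your sketch against.

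Your outline is in the right spirit---a Calder\'on--Zygmund argument is indeed what \cite{prs} carries out---but a few details are off. The interpolation producing the exponent $\tfrac1q-\tfrac1p$ takes place in the $q$-parameter of the $\ell^q$-valued target, between the trivial $q=p$ endpoint and a weak-type $(p,p)$ estimate for the $\ell^q$-valued operator at a smaller $q$; it is not an interpolation between $L^{p_0}$ and $L^p$ in the domain, so your ``weak-type $L^{p_0,\infty}$'' formulation is not the right intermediate step. The $L^{p_0}$ hypothesis \eqref{Tkassu2} enters only inside the Calder\'on--Zygmund step (on the good part, after exploiting the $L^\infty$ control that the decomposition furnishes), which is why the constant $B_0$ appears with exponent $p_0/p$ in $C_\star$. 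The measure bound \eqref{Tkassumeas} and the kernel decay \eqref{Tkassu3} are used on the bad part exactly as you describe, combining into $A\la^{1/p}+B_1$. For the precise execution and the mechanism generating the logarithm, you would need to consult \cite{prs} directly.
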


We claim that for $\ell>0$
\Be
%\begin{multline}
\label{Sobresult}\Big\|\Big(\sum_{k:\floor{k/3}\ge\ell} 2^{kq/p} |P_k\cR_{k,\ell} f_k|^q\Big)^{1/q} \Big\|_p\\
\le C_p  2^{-\ell\eps(p)} 
\Big(\sum_k\|f_k\|_p^p\Big)^{\frac 1p}, \quad p>4
\Ee
%\end{multline}
which can be used, together with \eqref{Littlewood-Paley-calc}  to deduce
\Be\label{curveresult}
 \cR:  (B^s_{p,p})_{\text{comp}} \to (F^{s+1/p}_{p,q})_{\loc}, \quad p>4, \,\, q>0. \Ee
Since $L^s_p= F^s_{p,2} \hookrightarrow B^s_{p,p}$ for $p>2$ and 
$F^0_{p,q}\hookrightarrow F^0_{p,2} =L^p$, $q\le 2$,  this implies the asserted $L^p$-Sobolev estimates.
In order to check \eqref{Sobresult} we need to verify the assumptions of the proposition for the family $\{\cR_{k,\ell}\}_{k\ge 3\ell}$.

Let $4<p_0<p$. By Theorem \ref{Rklthm} we have \eqref{Tkassu1} with $A=C_p2^{-\ell \beta}$ and 
$\beta<2/p-1/2$ if $4<p\le 6$ and $\beta<1/p$ if $p\ge 6$. Moreover
we have \eqref{Tkassu2} with $B_0=C_{p_0} 2^{-\ell \beta_0}$ and 
$\beta_0<2/p_0-1/2$ if $4<p_0\le 6$ and $\beta_0<1/p$ if $p_0\ge 6$. 

 By  integration by parts argument one has the bound 
 $$|R_{k,\ell}(x,y)|\le C_N 
 \frac{2^{2k}}{(1+2^{k-\ell} |y'- S(x_Q,y_3)|
)^N}\,
$$ for the Schwartz kernel of $\cR_{k,\ell}$.
For a cube  $Q$ with center
$x_Q$ define 
$$E_Q:=\{y: |y'- S(x_Q,y_3)|\le C 2^{2\ell}\diam(Q)\}$$
if $\diam(Q)\le 1$. If $\diam (Q)\ge 1$ we let $E_Q$ be a ball
 of diameter $C2^{2\ell}\diam(Q)$,  centered at $x_Q$. Assumption \eqref{Tkassumeas} is then satisfied with the choice of $\la= 2^{2\ell}$ and \eqref{Tkassu3} holds with $B_1= 2^{2\ell}$. The logarithmic term in \eqref{Lpresult} gives us  an additional factor  $O(\ell)$. Thus we have verified \eqref{Sobresult} with $\eps(p)<\beta$ and 
\eqref{curveresult} follows by summation in $\ell\ge 0.$
 
 \section{Further results and conjectures}\label{lastsection}
 In our analysis we heavily used the condition $\ell\le\floor{k/3}$ for the operators $\cR_{k,\ell}$. If one is interested to relax the assumption that $\pi_R$ is a fold one needs to explore finer localizations of
  $\tau_1\Delta_1+\tau_2\Delta_2$ as used by Comech in \cite{comech}. There he proves sharp $L^2$-Sobolev estimates under the assumption that $\pi_L$ is a fold but $\pi_R$  satisfies a finite type condition of order 
  $\ft$, i.e. if $V_R$ is a kernel field for $\pi_R$ then $\sum_{j=0}^{\ft} |V_R^j(\det\pi_R)|\neq 0$. 
  The case $\ft=1$ applies to the fold assumption on $\pi_R$. 
   In the general  finite type situation we can show the 
  $L^p_{\comp} \to L^p_{1/p,\loc}$ estimate for $p\ge 5$, and in fact in a slightly larger range.

\begin{thm}\label{ftthm}
Let $\cM\subset \Om_L\times\Om_R$  be a four-dimensional manifold  such that the projections \eqref{singsupp} are submersions. Assume  that  
 the only singularities of $\pi_L:(\cN^*\cM)'\to T^*\Om_L$  are Whitney folds and that 
  $\pi_R:(\cN^*\cM)'\to T^*\Om_R$ is of finite type $\le \ft$, for some $\ft\ge 0$.  With $\cL, \varpi$ be as in Theorem \ref{mainthm} suppose
  that $\varpi$  is a submersion. Then $\cR$ is extends to a continuous  operator 
$$ \cR: L^p_{\comp} (\Omega_R)\to  L^p_{1/p,\loc}(\Omega_L), \quad \tfrac{10\ft+2}{2\ft+1}<p<\infty\,.
$$
\end{thm}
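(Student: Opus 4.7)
The plan is to follow the strategy of Theorem \ref{mainthm}, preserving every step that uses only the $\pi_L$-fold structure and the submersion of $\varpi$, and replacing the $\pi_R$-dependent $L^2$ input by a weaker estimate derived from the finite type hypothesis. The dyadic--angular decomposition $\cR = \sum_k \sum_{\ell \le \lfloor k/3\rfloor} \cR_{k,\ell}$ of Section \ref{basicdecsect} is based only on the size of $\det \pi_L = \tau_1\Delta_1+\tau_2\Delta_2$ and carries over without change. The decoupling inequality (Theorem \ref{decouplingthm}) and its derivations in Sections \ref{decouplingstepmodelsect}--\ref{decouplingstepsect} likewise do not involve $\pi_R$: they rest on the curvature of the cones $\Sigma_x \subset T^*\Omega_L$, which follows from the $\pi_L$-fold together with the submersion of $\varpi$ (Lemma \ref{curvcalc}), and on the Bourgain--Demeter inequality. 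The kernel bound \eqref{infty} and hence the $L^\infty\to L^\infty$ bound $\|\cR_{k,\ell}\|_\infty = O(1)$ are also unaffected.

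The genuinely new ingredient is the $L^2$ bound on $\cR_{k,\ell}$. In place of \eqref{phstest} I would aim for an estimate of the form
\[
\|\cR_{k,\ell}\|_{L^2\to L^2} \lesssim 2^{-k/2 + \ell\, \gamma(\ft)}, \qquad \gamma(\ft) = \frac{3\ft}{2(2\ft+1)},
\]
which reduces to \eqref{phstest} at $\ft=1$. This is to be obtained by a Comech-type secondary decomposition of $\cR_{k,\ell}$ according to the scales at which the iterated derivatives $V_R^j(\det \pi_R)$, $j = 0, 1, \ldots, \ft$, vanish; the submersion hypothesis on $\varpi$ guarantees that such a decomposition is compatible with the angular $\ell$-localization. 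Each sub-piece is controlled by integration by parts in the $\pi_R$-kernel direction together with a Cotlar--Stein almost-orthogonality argument in the style of \cite{comech}; the exponent $\gamma(\ft)$ then arises from optimizing over the secondary parameter.

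Granted this bound, the analogue of Theorem \ref{Rklthm} is obtained by interpolating with the $L^\infty$ estimate and combining with the strip decoupling \eqref{decouplingest}, exactly as in Section \ref{basicdecsect}. A direct bookkeeping of the exponents shows that the resulting $\ell$-dependent bound on $\|\cR_{k,\ell}\|_{L^p\to L^p}$ is summable to $\|\cR_k\|_{L^p\to L^p}\lesssim 2^{-k/p}$ precisely when $p > 4\gamma(\ft)+2 = (10\ft+2)/(2\ft+1)$; at $\ft=1$ this recovers the range $p>4$ of Theorem \ref{mainthm}. The $L^p$-Sobolev conclusion then follows from the Triebel--Lizorkin framework developed in Section \ref{endpoint-bound} (the proposition quoted from \cite{prs}), which is insensitive to the FIO structure once the $L^p$ bound on $\cR_k$ and the kernel size bound are in place.

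The main obstacle is the first step above: producing the $L^2$ bound with the sharp exponent $\gamma(\ft)$. Comech's original analysis was stated for the global operator rather than for the dyadic pieces $\cR_{k,\ell}$, so one has to interlace his $\pi_R$-kernel iteration with the $\pi_L$-angular localization in $\ell$, verifying that the almost-orthogonality remains valid uniformly in $\ell\le \lfloor k/3\rfloor$. Once this interlacing is carried out, the remaining ingredients -- the decoupling, the interpolation, and the Triebel--Lizorkin wrap-up -- slot in without further changes.
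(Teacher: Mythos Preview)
Your proposal is correct and arrives at the right threshold, but it organizes the $L^2$ input differently from the paper, and in a way that makes more work than necessary.

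The paper's key observation is that Comech's estimates already deliver the \emph{unchanged} bound \eqref{phstest}, $\|\cR_{k,\ell}\|_{L^2\to L^2}\lesssim 2^{(\ell-k)/2}$, for every $\ell<\lfloor k/3\rfloor$, even under the finite-type hypothesis on $\pi_R$. Hence for those $\ell$ the entire proof of Theorem~\ref{mainthm} goes through verbatim, and the only piece requiring new analysis is the single endpoint $\ell=\lfloor k/3\rfloor$, for which one has the weaker bound $\|\cR_{k,\lfloor k/3\rfloor}\|_{L^2\to L^2}\lesssim 2^{-\frac{k}{2}\frac{\ft+1}{2\ft+1}}$. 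The restriction $p>(10\ft+2)/(2\ft+1)$ then comes entirely from this one term after combining with the decoupling inequality and interpolating with $L^\infty$. By contrast, you posit a uniform bound $2^{-k/2+\ell\gamma(\ft)}$ for \emph{all} $\ell\le\lfloor k/3\rfloor$ and propose to prove it by interlacing a Comech-type secondary decomposition with the $\ell$-localization --- the step you flag as ``the main obstacle.'' Your bound is certainly true (it coincides with the paper's at $\ell=\lfloor k/3\rfloor$ and is weaker than \eqref{phstest} elsewhere, since $\gamma(\ft)\ge\tfrac12$), and your exponent bookkeeping checks out. But the obstacle largely evaporates once you notice that \eqref{phstest} survives for $\ell<\lfloor k/3\rfloor$: the secondary decomposition is needed only for the endpoint piece, not uniformly in $\ell$. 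Both routes reach the same conclusion; the paper's is shorter because it isolates the new work to a single value of $\ell$.
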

\begin{proof}[Sketch of Proof]
 By the $L^2$ estimates in \cite{comech} the operators $\cR_{\ell,k}$ the $L^2$ bound in \eqref{phstest} is  still valid, and all of our previous arguments apply. Hence we just need to consider the case $\ell=\floor{k/3}$.
 
 The operator $\cR_{\floor{k/3} ,k}$,   for which 
  $|\tau_1\Delta_1+\tau_2\Delta_2|\lc 2^{-k/3}$,  
 satisfies the norm estimate   $\|\cR_{k,\lfloor k/3\rfloor} \|_{L^2\to L^2}
  \lc 2^{-\frac{k}{2} \frac{\ft+1}{2\ft+1}}$, a less satisfactory bound.  
  One can show this estimate as a consequence of more refined $L^2$-estimates in \cite{comech}.
  This yields an  analogue of \eqref{ellpLpbds} in the finite type case, namely for $2\le p\le\infty$,
  \Be\Big(\sum_\nu \big\|\cR_{k,\floor{k/3}} [\bbone_{\floor{k/3},\nu}g_\nu] \big\|^p\Big)^{\frac 1p} 
\lc 
2^{-\frac kp\frac{\ft+1}{2\ft+1} -\frac k3(1-\frac 2p)}\Big(\sum_\nu  \|g_\nu\|_p^p\Big)^{\frac 1p}.
\notag
\Ee 
Combining this with the  decoupling estimate
\eqref{decouplingest} (which remains true for $\ell=\floor{k/3}$) yields 
\Be
\big\|  \cR_{k,\floor{k/3}} f
\big\|_p 
\lc C_\eps 2^{\frac{k}{3} (\frac 12-\frac 1p+\ep)}
2^{-\frac kp\frac{\ft+1}{2\ft+1} -\frac k3(1-\frac 2p)}
\|f\|_p, 
\text{  $2\le p\le 6$},
\notag
\Ee
i.e.   
$\|  \cR_{k,\floor{k/3}} f
\|_p \lc 2^{-k(\alpha(p)+1/p)}$ with $\alpha(p)>0$ for $\frac{10\ft+2}{2\ft+1}< p\le 6$. 
Further  interpolation with the  bound $\|\cR_{k,\floor{k/3}}\|_{L^\infty\to L^\infty} =O(1)$ gives a similar statement for  $6\le p\le \infty$ with an $\alpha(p)>0$ for $6\le p<\infty$. 
\end{proof}

  To improve on this result, one would have to employ finer localizations in terms of $\det \pi_L$
  (which would correspond to the assumption 
  $|\tau_1\Delta_1+\tau_2\Delta_2| \approx 2^{-\ell}$ where a range of $\ell>k/3$ will depend on $\ft$). Our current arguments for the  plate localization  in  Lemma \ref{platelocalizationlemma} are  not effective 
in that situation. Nevertheless we conjecture that the result 
 of Theorem \ref{ftthm} remains true for all $p>4$, and even that the assumptions on $\pi_R$ can be dropped altogether in Theorem \ref{mainthm}. 
 See the discussion of  model examples in \S\ref{xraysubsection} and \S\ref{momentcurveHsect}.

%\newpage
\end{document}